\def\namedlabel#1#2{\begingroup
    #2%
    \def\@currentlabel{#2}%
    \phantomsection\label{#1}\endgroup
}
\def\NAT@spacechar{~}
\crefname{figure}{Figure}{Figures}
\Crefname{figure}{Figure}{Figures}
\newtheorem{definition}{Definition}[section]
\newtheorem{claim}[definition]{Claim}
\newtheorem{proposition}[definition]{Proposition}
\newtheorem{theorem}[definition]{Theorem}
\newtheorem{lemma}[definition]{Lemma}
\newtheorem{conjecture}[definition]{Conjecture}
\newtheorem{remark}[definition]{Remark}
\newenvironment{claimproof}{%
\let\origqed=\qedsymbol%
\renewcommand{\qedsymbol}{$\blacktriangleleft$}%
\begin{proof}}{\end{proof}\let\qedsymbol=\origqed}
\numberwithin{equation}{section}
\newcommand{\bigO}{\ensuremath{\mathcal{O}}}
\newcommand{\cupdot}{\mathbin{\dot\cup}}
\def\disc{{\mbox{\rm ex}}}
\def\ex{{\mbox{\rm ex}}}
\def\pn{{\mbox{\rm pn}}}
\newcommand{\COMMENT}[1]{}
\newcommand{\COMNEW}[1]{}
\title{Path decompositions of random directed graphs}
\author[A.~Espuny D\'iaz]{Alberto Espuny D\'iaz}
\address[Espuny D\'iaz]{Institut f\"ur Mathematik, Technische Universit\"at Ilmenau, 98684 Ilmenau, Germany.}
\email{alberto.espuny-diaz@tu-ilmenau.de}
\author[V.~Patel]{Viresh Patel}
\author[F.~Stroh]{Fabian Stroh}
\address[Patel, Stroh]{Korteweg-de Vries Institute, Universiteit van Amsterdam, 1090 GE Amsterdam, The Netherlands.}
\email{V.S.Patel@uva.nl, f.j.m.stroh@uva.nl}
\thanks{This project started during a research visit of the first author to the Korteweg-de Vries Institute, which was partially funded by an LMS Early Career Research Travel Grant (Ref ECR-1920-23) and the University of Birmingham.
A.~Espuny Díaz has also received partial funding from the European Research Council (ERC) under the European Union's Horizon 2020 research and innovation programme (grant agreement no.~786198) and the Carl Zeiss Foundation.
V.~Patel is supported by the Netherlands Organisation for Scientific Research (NWO) through the Gravitation Programme Networks (024.002.003).
F.~Stroh is supported by the NWO TOP grant (613.001.601).}
\date{\today}
\begin{document}

\begin{abstract}
We consider the problem of decomposing the edges of a directed graph into as few paths as possible.
There is a natural lower bound for the number of paths needed in an edge decomposition of a directed graph $D$ in terms of its degree sequence: this is given by the excess of $D$, which is the sum of $|d^+(v) - d^-(v)|/2$ over all vertices $v$ of $D$ (here $d^+(v)$ and $d^-(v)$ are, respectively, the out- and indegree of $v$). 

A conjecture due to Alspach, Mason and Pullman from 1976 states that this bound is correct for tournaments of even order.
The conjecture was recently resolved for large tournaments.
Here we investigate to what extent the conjecture holds for directed graphs in general.
In particular, we prove that the conjecture holds with high probability for the random directed graph $D_{n,p}$ for a large range of $p$ (thus proving that it holds for most directed graphs).
To be more precise, we define a deterministic class of directed graphs for which we can show the conjecture holds, and later show that the random digraph belongs to this class with high probability.
Our techniques involve absorption and flows.
\end{abstract}

\maketitle

\thispagestyle{empty}

\section{Introduction}

An area of extremal combinatorics that has seen a lot of activity recently is the study of decompositions of combinatorial structures.
The prototypical question in this area asks whether, for some given class $\mathcal{C}$ of graphs, directed graphs, or hypergraphs, the edge set of each $H \in \mathcal{C}$ can be decomposed into parts satisfying some given property.
When studying decompositions, one often wishes to minimise the number of parts; e.g., in the case of edge colourings, determining the chromatic index amounts to partitioning the edges of a graph into as few matchings as possible.
In this paper, we will be concerned with decomposing the edges of directed graphs into as few (directed) paths as possible.

Let $D$ be a directed graph (or digraph for short) with vertex set $V(D)$ and edge set $E(D)$.
A path decomposition of $D$ is a collection of paths $P_1, \ldots, P_k$ of $D$ whose edge sets $E(P_1), \ldots, E(P_k)$ partition $E(D)$.
Given any directed graph $D$, it is natural to ask what the minimum number of paths in a path decomposition of $D$ is. 
This is called the \emph{path number} of $D$ and is denoted $\pn(D)$.
A natural lower bound on $\pn(D)$ is obtained by examining the degree sequence of $D$.
For each vertex $v \in V(D)$, write $d^+_D(v)$ (resp.\ $d_D^-(v)$) for the number of edges exiting (resp.\ entering) $v$.
The \emph{excess} at vertex $v$ is defined to be $\ex_D(v) \coloneqq d_D^+(v) - d_D^-(v)$.
We note that, in any path decomposition of $D$, at least $|\ex_D(v)|$ paths must start (resp.\ end) at $v$ if $\ex_D(v) \geq 0$ (resp.\ $\ex_D(v) \leq 0$).  
Therefore, we have
\[
\pn(D) \geq \ex(D) \coloneqq \frac{1}{2}\sum_{v \in V(D)} |\ex_D(v)|,
\]
where $\ex(D)$ is called the \emph{excess} of $D$.
Any digraph for which equality holds above is called \emph{consistent}.
Clearly, not every digraph is consistent; in particular, any Eulerian digraph $D$ has excess $0$ and so cannot be consistent.

For the class of tournaments (that is, orientations of the complete graph), Alspach, Mason, and Pullman~\cite{AMP} conjectured that every tournament with an even number of vertices is consistent. 

\begin{conjecture}
\label{conj:Pull}
Every tournament\/ $T$ with an even number of vertices is consistent.
\end{conjecture}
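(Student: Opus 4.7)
The plan is to combine an Euler-tour construction with an absorption argument, crucially exploiting the parity of $n$.

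The first observation is that since $n = |V(T)|$ is even, every vertex $v$ has $d_T^+(v) + d_T^-(v) = n - 1$ odd, so $|\ex_T(v)| \geq 1$ for every $v$. In particular $\ex(T) \geq n/2$ and every vertex is forced to be the start or end of at least one path in any consistent decomposition, giving considerable flexibility. I would first reduce to $n \geq n_0$ for some large constant $n_0$ by a finite check, and handle large $n$ by the following scheme.

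Reserve a small random subgraph $A \subseteq T$ as an \emph{absorber} with the robust property that, for any small Eulerian subdigraph $F$ of $T$ edge-disjoint from $A$, the digraph $A \cup F$ admits a path decomposition into exactly $\ex(A \cup F)$ paths with prescribed endpoints matching its excess vector. On the remainder $T' \coloneqq T \setminus A$, apply the classical construction of adjoining an auxiliary vertex $u$ joined to each excess vertex of $T'$ by the appropriate number of dummy edges so that the resulting digraph becomes Eulerian. An Euler tour split at $u$ then produces exactly $\ex(T')$ walks running from positive- to negative-excess vertices of $T'$. These walks may revisit vertices; the final step is to reroute each repeated-vertex segment through the absorber $A$, placing the displaced edges into $A$ as an Eulerian leftover. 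The absorber property then converts $A$ together with this leftover into exactly $\ex(A)$ simple paths, and the two collections combine to give a decomposition of $T$ into $\ex(T') + \ex(A) = \ex(T)$ simple paths.

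The principal obstacle is constructing the absorber $A$ so that it is simultaneously small enough not to inflate the final count and sufficiently flexible to swallow arbitrary small Eulerian leftovers with prescribed endpoint data. The flexibility typically demands pseudorandom structural properties of $T$ — roughly, that every pair of large vertex sets spans many edges in both directions — together with concentration estimates for a random $A$. Equally delicate is verifying that the rerouting step itself does not introduce fresh vertex repetitions; this is where quantitative quasi-randomness of tournaments is essential and is the step most likely to require the heaviest lifting, presumably via the flow-based techniques advertised in the abstract.
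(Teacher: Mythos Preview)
The statement you are trying to prove is \emph{not} proved in the paper. It is stated there as \cref{conj:Pull}, a conjecture of Alspach, Mason, and Pullman, and the paper only cites it as motivation: the authors note that it was resolved for sufficiently large tournaments in~\cite{GGKLO20} via the robust-expanders machinery, and then move on to prove an analogous result for the random digraph $D_{n,p}$. So there is no ``paper's own proof'' to compare against, and your proposal should be judged on its own merits as an attempted proof of an open (for small $n$) and recently-resolved (for large $n$) conjecture.

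On those merits, there is a fundamental gap. Your scheme explicitly requires ``pseudorandom structural properties of $T$'' and ``quantitative quasi-randomness of tournaments'' both to build the absorber and to carry out the rerouting. But the conjecture is about \emph{every} even tournament, including highly structured ones such as the transitive tournament or near-transitive tournaments, which have no quasi-randomness to exploit: large vertex sets can span edges in only one direction. Any absorption argument that genuinely handles all tournaments must cope with this, and indeed the actual proof in~\cite{GGKLO20} separates the almost-regular (robust-expander) case from the far-from-regular case and treats them by entirely different methods. Your sketch offers nothing for the latter regime.

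There are secondary issues as well. The identity $\ex(T') + \ex(A) = \ex(T)$ for $T' = T \setminus A$ is false in general: it holds only when the sign of $\ex_T(v)$ agrees with the sign of $\ex_A(v)$ at every vertex, which your random $A$ has no reason to satisfy. The ``finite check'' for $n < n_0$ is asserted rather than performed. And the rerouting step---turning Euler-tour walks into simple paths by pushing repeated segments into the absorber---is precisely the heart of the problem and is left entirely unspecified; in particular, the displaced pieces need not form a \emph{small} Eulerian leftover, so the absorber's capacity constraint is not evidently met.
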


Many cases of this conjecture were resolved by the second author together with Lo, Skokan, and Talbot~\cite{LPST20}, and the conjecture has very recently been completely resolved (for sufficiently large tournaments) by Gir{\~a}o, Granet, K{\"u}hn, Lo, and Osthus~\cite{GGKLO20}.
Both results relied on the robust expanders technique, developed by K{\"u}hn and Osthus with several coauthors, which has been instrumental in resolving several conjectures about edge decompositions of graphs and directed graphs; see, e.g., \cite{CKLOT,KO,KO2}.

The conjecture seems likely to hold for many digraphs other than tournaments: indeed, the conjecture was stated only for even tournaments probably because it considerably generalised the following conjecture of Kelly, which was wide open at the time.
Kelly's conjecture states that every regular tournament has a decomposition into Hamilton cycles.
The solution of Kelly's conjecture for sufficiently large tournaments was one of the first applications of the robust expanders technique \cite{KO}.

A natural question then arises from \cref{conj:Pull}: which directed graphs are consistent?
It is NP-complete to determine whether a digraph is consistent~\cite{deVos20}, and so we should not expect to characterise consistent digraphs.
Nonetheless, here we begin to address this question by showing that the large majority of digraphs are consistent.
We consider the random digraph $D_{n,p}$, which is constructed by taking $n$ isolated vertices and inserting each of the $n(n-1)$ possible directed edges independently with probability $p$.
Our main result is the following theorem.

\begin{theorem}
\label{thm:mainintro}
Let $\log^4n/n^{1/3}\leq p \leq1-\log^{5/2}n/n^{1/5}$. Then, asymptotically almost surely (a.a.s.~for short)\footnote{That is, with probability tending to $1$ as $n$ goes to infinity.}~$D_{n,p}$ is consistent.
\end{theorem}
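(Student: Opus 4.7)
The plan is to split the argument into a deterministic structural statement and a routine probabilistic verification, as the paper announces in the abstract. First, I would identify a collection of pseudorandom properties for a digraph $D$ on $n$ vertices --- concentration of in- and out-degrees at $pn$ with error $O(\sqrt{pn\log n})$, concentration of edge densities $e(A,B)\approx p|A||B|$ between all disjoint vertex sets above a (poly)logarithmic size, and a co-degree / ``many short paths'' condition linking any ordered pair of vertices --- chosen so that any digraph $D$ on $n$ vertices satisfying them is consistent. That $D_{n,p}$ satisfies these properties a.a.s.\ in the stated range of $p$ is a routine Chernoff plus union bound exercise, with the numerical thresholds on $p$ emerging from the pseudorandom estimates that the structural part needs.

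For the structural part, my approach follows the absorption--flow paradigm announced in the abstract. Let $V(D)=V^+\cupdot V^-\cupdot V^0$ be the partition of $V(D)$ according to the sign of $\ex_D(v)$; a consistent decomposition is one in which exactly $|\ex_D(v)|$ paths start (resp.\ end) at each $v\in V^+$ (resp.\ $V^-$) and none at any $v\in V^0$. I would first reserve a small but highly flexible absorbing subdigraph $A\subseteq D$, chosen via a sparse random selection, with the property that every ordered pair $(u,v)$ of vertices is linked by many internally disjoint short paths in $A$. Working in $D':=D-A$ (which still enjoys the pseudorandom properties with slightly weakened constants), I would model the remaining task as an integer flow problem: formally augment $D'$ by adding, for each $v\in V^+$, $\ex_D(v)$ virtual edges into $v$ from a virtual source (and analogously out of each $v\in V^-$), so that every real vertex becomes balanced. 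A unit integer flow on every real edge of the resulting balanced multigraph decomposes into edge-disjoint directed closed trails which, after cutting at the virtual edges, produces $\ex(D')$ directed \emph{trails} in $D'$ with the correct endpoint profile.

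The main obstacle --- and the reason the structural statement is nontrivial --- is converting these trails into genuine simple \emph{paths} without increasing the total count, and simultaneously using every edge of $A$ exactly once. Each time a trail revisits a vertex $w$, I would short-circuit through $A$: replace a segment of the trail around $w$ by a short $A$-path that avoids $w$, reducing the number of self-intersections by one and absorbing some edges of $A$ into the decomposition. The delicate point is to execute all such reroutes simultaneously: they must remain edge-disjoint across trails, must not create new repetitions, and must cover all of $A$ along the way. I would manage this by iterating a Hall-type matching between the unresolved repetitions and the available absorber routes at each step, with the ``many short paths'' property of $A$ certifying Hall's condition. Combined with the probabilistic verification, this yields \cref{thm:mainintro}.
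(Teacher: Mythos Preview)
Your high-level split into ``deterministic pseudorandom conditions $+$ Chernoff verification'' matches the paper, but the structural argument you sketch has a genuine gap at the heart of the absorption step, and the absorber you build is of the wrong shape to repair it.

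The problematic step is the trail-to-path conversion. You say: when a trail revisits $w$, ``replace a segment of the trail around $w$ by a short $A$-path that avoids $w$''. But if the trail reads $\ldots u\,w\,x\ldots$ and you substitute an $A$-path from $u$ to $x$, the original edges $uw,wx$ are no longer covered by any path in the decomposition. Rerouting through $A$ absorbs $A$-edges, yes, but it \emph{ejects} $D'$-edges; you never say where those ejected edges go. The only edge-preserving way to handle a repeated vertex is to split the trail into a shorter trail and a closed walk, and then you are back to the real problem: absorbing closed walks (cycles) without increasing the path count. Your proposal does not address this. Relatedly, you assert that the reroutes ``must cover all of $A$ along the way'', but there is no mechanism tying the number of self-intersections to $e(A)$, and no fallback for unused $A$-edges. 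There is also a bookkeeping issue: you add $\ex_D(v)$ virtual edges at $v$ but work in $D'=D-A$, where the excess is $\ex_{D'}(v)$, so the augmented digraph is not balanced unless $A$ is Eulerian at every vertex, which your random sparsification does not guarantee.

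The paper's absorber is engineered precisely to avoid these traps, and it looks quite different from yours. Rather than a connectivity-type absorber (short paths between arbitrary pairs), the paper partitions $V$ by the \emph{magnitude} of the excess into $A^+,A^-,A^0$ and takes the absorber to consist entirely of $(A^+,A^-)$-paths of length $1$ or $2$. This buys two things your design lacks: (i) any unused absorber path is already a legitimate path of the final decomposition, so there is no ``cover all of $A$'' constraint; (ii) the count is exactly maintained because a cycle $C$ together with two absorber edges can always be rewritten as two $(A^+,A^-)$-paths. After setting the absorber aside and greedily peeling off excess-reducing paths, the remainder is Eulerian; it is decomposed into $O(n\log n)$ cycles via the Knierim--Martinsson--Steger (et al.) bound, and each cycle is then absorbed with two absorber edges. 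The flow argument in the paper is not used to produce the trails themselves, but to \emph{allocate} absorber edges to cycles so that no vertex is overused --- with separate regimes for long, medium, and short cycles. Your Hall-type matching idea is in the right spirit for this allocation step, but it is aimed at the wrong target (trail repetitions rather than cycle absorption).
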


Notice that some upper bound on $p$, as in the above theorem, is necessary because, when $p=1$, we have that $\ex(D_{n,p})=0$ (with probability $1$) and so $D_{n,p}$ cannot be consistent.
Moreover the property of being consistent is not a monotone property, that is, adding  edges to a consistent digraph does not imply the resulting digraph is consistent.
Therefore, unlike many other properties, we should not necessarily expect a threshold for the consistency of random digraphs. We believe that the theorem holds for much smaller (and larger) values of $p$, and perhaps even that no lower bound on $p$ is necessary.
For this reason, we have not tried to optimise the polylogarithmic terms in our bounds on $p$.

It is interesting to compare Theorem~\ref{thm:mainintro} to the situation of the chromatic index for random graphs.
Recall that $\chi'(G)$ (the chromatic index of $G$) is the minimum number of matchings in an edge decompostion of a graph $G$ into matchings, and that $\Delta(G)$ (the maximum degree of $G$) is an obvious lower bound for this number (just as $\ex(D)$ is an obvious lower bound for $\pn(D)$). 
Erd\H{o}s and Wilson~\cite{EW} showed that a.a.s.~the random graph $G=G_{n,p}$ satisfies $\chi'(G) = \Delta(G)$ for $p=1/2$.
Frieze, Jackson, McDiarmid, and Reed~\cite{FJMR99} extended this to all constant values of $p\in(0,1)$.
Recently, this was extended to all $p=o(n)$ by Haxell, Krivelevich, and Kronenberg~\cite{HKK}; this might suggest that Theorem~\ref{thm:mainintro} could also hold with no lower bound on $p$. 

The proof of \cref{thm:mainintro} does not use randomness in a very significant way.
In fact, we give a set of sufficient conditions for a digraph to be consistent and show that the random digraph (for suitable $p$) satisfies these conditions with high probability.
Here we give a simplified  version of our main deterministic result (see \cref{thm:main} for the full statement).
For a digraph $D$, a subset of vertices $S \subseteq V(D)$, and a vertex $v \in V(D)$, we write $e_D(v,S)$ (resp.\ $e_D(S,v)$) for the number of outneighbours (resp.\ inneighbours) of $v$ in $S$.

\begin{theorem}
\label{thm:determintro}
There exist constants $n_0$ and $c$ such that the following holds.
Let $D=(V,E)$ be a digraph on $n\geq n_0$ vertices.
Set $t\coloneqq c (n \log n)^{2/5}$ and let $A^+\coloneqq\{v\in V:\ex_D(v)\geq t\}$, $A^-\coloneqq\{v\in V:\ex_D(v)\leq -t\}$, and $A^0\coloneqq V\setminus(A^+\cup A^-)$.
Assume there is some $d \geq t$ such that
\begin{enumerate}[label=$(\mathrm{\roman*})$]
    \item\label{mainintroitem1} for every $v\in A^+$ we have $d/4\leq e_D(v,A^-)\leq d$,
    \item\label{mainintroitem2} for every $v\in A^-$ we have $d/4\leq e_D(A^+,v)\leq d$,
    \item\label{mainintroitem3} for every $v \in A^+ \cup A^-$ we have $e_D(v,A^0),e_D(A^0,v)\leq \min\{d/3, t^2/10^6\}$, and
    \item\label{mainintroitem4} for every $v\in A^0$ we have $e_D(A^+,v), e_D(v,A^-)\geq d/3$.
\end{enumerate}
Then, $D$ is consistent.
\end{theorem}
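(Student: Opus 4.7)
The target is a decomposition of $E(D)$ into exactly $\ex(D)$ simple directed paths. The naive approach of making $D$ balanced by adding $\ex(D)$ virtual arcs and taking an Euler tour of the augmented multidigraph produces $\ex(D)$ edge-disjoint \emph{trails}, but these may revisit vertices; the content of the theorem is to show that, under the structural hypotheses \ref{mainintroitem1}--\ref{mainintroitem4}, one can produce such a decomposition in which every trail is a simple path.

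First I would reserve a small \emph{absorber} $\mathcal{A}\subseteq E(D)$: a collection of edge-disjoint simple paths from $A^+$ to $A^-$ with many parallel alternatives, built from both direct $A^+\to A^-$ edges (provided in abundance by \ref{mainintroitem1}--\ref{mainintroitem2}) and length-$2$ routes $A^+\to A^0\to A^-$ (provided by \ref{mainintroitem4}). The defining property of $\mathcal{A}$ is a local-swap lemma: for any reasonable leftover edge $e\in E(D)\setminus E(\mathcal{A})$, there is a path $P\in\mathcal{A}$ and an edge-for-edge exchange that inserts $e$ into $P$ while preserving both the endpoints and the simplicity of $P$ (and hence the total number of paths).

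Next I would handle the bulk of $E(D)\setminus E(\mathcal{A})$ by formulating an integral transportation/flow problem: supply $\ex_D(v)$ at each $v\in A^+$, demand $|\ex_D(v)|$ at each $v\in A^-$, arcs corresponding to edges of $D$, and $A^0$-vertices acting as intermediate nodes. Feasibility is routine via max-flow/Hall, using the lower bounds in \ref{mainintroitem1}--\ref{mainintroitem2} and \ref{mainintroitem4}. Solving this flow so that the bulk paths are short — length $1$ (direct $A^+A^-$ edges) or length $2$ ($A^+A^0A^-$ routes) — automatically keeps them simple and yields $\ex(D)-|\mathcal{A}|$ paths covering the vast majority of non-absorber edges. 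All edges not yet used — edges inside $A^+$, edges inside $A^-$, leftover $A^0$-incidences, residual excess at $A^0$-vertices with $0<|\ex_D(v)|<t$, and slack from the flow — are then absorbed into $\mathcal{A}$ one at a time via the swap operations. The upper bound $\min\{d/3,\ t^2/10^6\}$ in \ref{mainintroitem3} is the key quantitative ingredient here: it controls the total contribution of each $A^+\cup A^-$ vertex to the leftovers and thereby ensures that the built-in capacity of $\mathcal{A}$ suffices.

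The main obstacle is maintaining simultaneously (a) simplicity of every path and (b) the exact total count $\ex(D)$ throughout the construction. Simplicity rules out the naive Euler-tour reduction and forces careful local control of the transitions at each vertex; exactness rules out any swap that joins or splits paths, so every absorption must be strictly edge-for-edge. The delicate balance between the lower-bound density conditions (which provide routes and absorber capacity) and the upper-bound condition \ref{mainintroitem3} (which localises the leftover structure and prevents any single vertex from dominating) is precisely what lets both requirements be enforced at once.
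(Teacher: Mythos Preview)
Your overall architecture --- reserve an absorber of $(A^+,A^-)$-paths, process the bulk, then absorb leftovers --- matches the paper's shape, but the core mechanism you propose has a real gap.

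The paper does not absorb stray edges one at a time. After setting aside the absorber $\mathcal{A}$ and greedily stripping $(B^+,B^-)$-paths until the remainder $D^*$ is Eulerian, it decomposes $D^*$ into $O(n\log n)$ \emph{cycles} (via the Knierim--Larcher--Martinsson--Noever bound) and absorbs each cycle $C$ using exactly two absorber edges, so that $E(C)\cup\{e_1,e_2\}$ splits into two $(A^+,A^-)$-paths. This ``cycle $+$ two edges $\to$ two paths'' move is the workhorse (\cref{lem:findabsedges,lem:findleavingedges}); it is what keeps the path count invariant while guaranteeing simplicity. Your ``edge-for-edge swap'' that inserts a single leftover edge $e$ into some $P\in\mathcal{A}$ while preserving both endpoints and simplicity is never specified, and in general no such move exists: an arbitrary edge $e=uv$ need not have $u,v$ on any absorber path in the right order, and splicing $e$ in would typically change endpoints or create a repeated vertex. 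The cycle-level granularity is essential, not cosmetic.

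Relatedly, your use of flow is different from the paper's. You propose a transportation problem to route the \emph{bulk} of $E(D)$ as length-$1$ or length-$2$ paths; but $D$ can contain many edges inside $A^+$, inside $A^-$, from $A^-$ to $A^+$, or from $A^0$ to $A^+$, none of which fit into short $(A^+,A^-)$-paths, so ``most edges'' cannot be covered this way. In the paper the flow problems (\cref{def:flow}, \cref{cl:flowc2}) serve a different purpose: they \emph{assign} vertices of $V(C)\cap\dot A$ to medium and short cycles so that no vertex's absorber supply $f(v)$ is overused. The quantitative conditions you highlight --- the bound $t^2/10^6$ in \ref{mainintroitem3} and the choice $t\asymp (n\log n)^{2/5}$ --- are calibrated against the $O(n\log n)$ cycle count and the min-cut analysis of this assignment flow, not against a count of leftover edges. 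Without the cycle decomposition and the cycle-absorption lemma, your plan does not explain how the Eulerian remainder is incorporated into simple paths.
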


Here is a concrete class of examples to which \cref{thm:determintro} applies. 
Take the edge-disjoint union of $D=(V,E)$ and $D'=(V,E')$, where $D$ is any digraph obtained by taking a regular bipartite graph of degree $t \geq c (n\log n)^{2/5}$  and orienting all edges from one part to the other, and $D'$ is any Eulerian digraph of maximum degree at most $3t$.
One can easily check that \cref{thm:determintro}  applies to such digraphs (here $A^0$ is empty), and so such digraphs are consistent.
Note therefore that \cref{thm:determintro} can be applied to many digraphs that are far from having any expansion or pseudorandom properties; e.g., digraphs satisfying the conditions of \cref{thm:determintro} could easily be disconnected or weakly connected.

Broadly speaking, our proof relies on the use of the so-called absorption technique, an idea due to R{\"o}dl, Ruci{\'n}ski, and Szemer{\'e}di~\cite{RRS} (with special forms appearing in earlier work, e.g.,~\cite{Kriv}).
We adapt and refine some of the absorption ideas used in \cite{LPST20}, but we also require several new ingredients.
We explain the main ideas of our proof in the next section.
In contrast to the previous work on this question~\cite{LPST20,GGKLO20}, our proof does not make use of robust expanders.
Preliminary ideas for this work came from de Vos~\cite{deVos20}.

The rest of this paper is organised as follows.
We give a sketch of the proof of \cref{thm:mainintro} in \cref{sec:sketch}. 
\Cref{sec:prel} is dedicated to giving common definitions and citing results we use.
In \cref{sec:pathdecomp} we describe the \emph{absorbing structure} and we show how to use it to decompose directed graphs $D$ satisfying certain properties into $\ex(D)$ paths.
Finally, in \cref{sec:decomprandom} we show that the random digraph contains the absorbing structure and satisfies these properties with high probability.
The proof of \cref{thm:mainintro} appears in \cref{sec:decomprandom} and the proof of \cref{thm:determintro} appears in \cref{sec:pathdecomp}.

\section{Proof sketch}
\label{sec:sketch}

Let $D=D_{n,p}$ with $p$ as in \cref{thm:mainintro}.
We divide the vertices of $D$ into sets $A^+$, $A^-$ and $A^0$ depending on whether $\ex_D(v)\geq t$, $\ex_D(v)\leq -t$, or $-t < \ex_D(v) < t$, respectively, for a suitable choice of $t$ (as a function of $n$ and $p$).
One can show that, with high probability, $A^+$ and $A^-$ have roughly the same size and $A^0$ is small.

We start by setting aside an absorbing structure $\mathcal{A}$ which consists of a set of edge-disjoint (short) paths of $D$.
Each vertex $v \in V(D)$ will have a set of paths $f(v)$ from $\mathcal{A}$ assigned to it, where the sets $f(v)$ partition $\mathcal{A}$.
In particular, for each $v \in A^+$ (resp.\ $v \in A^-$), $f(v)$ consists of single-edge paths from $v$ to $A^-$ (resp.\  $A^+$ to $v$) and, for each $v \in A^0$, $f(v)$ consists of a path with two edges which goes from $A^+$ to $A^-$ through $v$.
We think of $\mathcal{A}$ interchangeably as a set of paths and as a digraph that is the union of those paths.
We will require that $|f(v)|$ is sufficiently large for every vertex $v$ but at the same time that $\ex_{\mathcal{A}}(v) \leq \ex_D(v)$ for every vertex $v$.   
We give a set of conditions that ensure the existence of one such absorbing structure in \cref{def:digraphclass1} (see \cref{lem:absstruct1,lem:absstruct2}), and \cref{sec:decomprandom} is devoted to showing, by using concentration inequalities for martingales, that $D_{n,p}$ fulfils these conditions (with high probability) for all values of $p$ in the desired range (and, in fact, for a slightly larger range than stated in \cref{thm:mainintro}).


Next it is straightforward to obtain a set of edge-disjoint paths $\mathcal{P}$ in $D \setminus E(\mathcal{A})$ such that $|\mathcal{P}| + |\mathcal{A}| = \ex(D)$, and such that, writing $D' \coloneqq D \setminus (E(\mathcal{A}) \cup E(\mathcal{P}))$, we have $\ex(D')=0$.
So $\mathcal{P} \cup \mathcal{A}$ gives the correct number (i.e., $\ex(D)$) of edge-disjoint paths but the edges in $D'$ are not covered, and moreover $D'$ is Eulerian.
Our goal now is to slowly combine edges of $\mathcal{A}$ with edges of $D'$ to create longer paths in such a way that we maintain exactly $\ex(D)$ paths at every stage (\emph{absorbing} the edges of $D'$).
If we manage to combine all the edges of $D'$ in this way, then we have decomposed $D$ into $\ex(D)$ paths, thus proving that $D$ is consistent.


To begin the process of absorption, we apply a recent result of \citet{KLMN21} (improving on an earlier result of \citet{HMSSY}) which allows us to decompose the edges of $D'$ into $O(n\log n)$ cycles.
The core idea then is to combine certain paths from $\mathcal{A}$ with each cycle $C$ given by the decomposition, and to decompose their union into paths; we refer to this as \emph{absorbing} the cycle.
Crucially, in order to keep the number of paths invariant, we will combine each cycle $C$ with a set $\mathcal{A}_C$ of two paths from $\mathcal{A}$ and decompose $C \cup \mathcal{A}_C$ into two paths, as illustrated in \cref{fig:AbsEx} (and thereafter, the edges $\mathcal{A}_C$ are no longer available for use in absorbing other cycles). 

\begin{figure}
	\centering
	\includegraphics[width=0.65\textwidth]{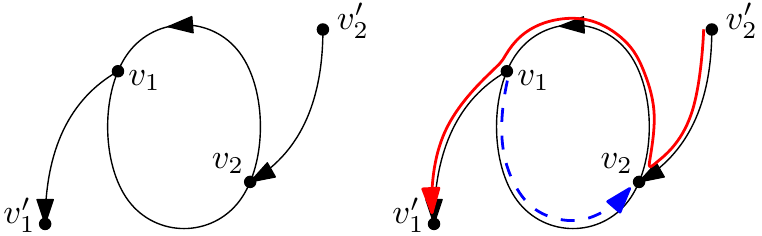}
	\caption{Left: One example of absorbing a cycle using two absorbing paths.
	We have $v_1,v_2$ on our cycle $C$ with $v_1\in A^+$, $v_2\in A^-$.
	We find paths $(v_1,v_1') \in f(v_1)$ and $(v_2',v_2) \in f(v_2)$ with $v_1'\in A^- \setminus V(C)$ and $v_2'\in A^+ \setminus V(C)$. \newline
	Right: The solid red and dashed blue lines show the two new paths $P_1\coloneqq v_2'v_2Cv_1v_1'$ and $P_2\coloneqq v_1Cv_2$, which use all involved edges. \newline
	Note that under certain circumstances, if $v_1',v_2'$ lie on $C$, we can still decompose all involved edges into two paths.}
	\label{fig:AbsEx}
\end{figure}

Therefore, we must allocate suitable absorbing paths to the cycles.
The two main challenges here are the following.
\begin{enumerate}[label=(\roman*)]
\item\label{sketchlabel1} The absorbing paths need to \emph{fit} the specific cycle, meaning they and the cycle can be decomposed into two paths.
Generally, given a cycle $C$, if we can find vertices $v_1, v_2 \in V(C)\setminus A^0$ and paths $P_1 \in f(v_1)$ and $P_2 \in f(v_2)$ where $P_1$ and $P_2$ have distinct endpoints not on $C$, then $P_1$ and $P_2$ will fit $C$ (see Figure~\ref{fig:AbsEx} for an example).
If both endpoints are on $C$, it is still sometimes possible (but not always) that $P_1$ and $P_2$ fit~$C$.
If $v_1$ or $v_2$ lie in $V(C)\cap A^0$, a similar idea can be used to find fitting paths.
\item\label{sketchlabel2} We only have a limited number of absorbing paths available at each vertex. 
\end{enumerate}
In order to address \ref{sketchlabel1}, we prepare more absorbing paths than we plan to use, as having the option to select from a sufficiently large number ensures that at least two fit a given cycle.
Any paths from $\mathcal{A}$ that we do not end up using to absorb a cycle remain as paths in the final decomposition.
In order to address point \ref{sketchlabel2}, we employ different strategies to assign absorbing edges to cycles, depending on the number of vertices that the cycle has in $A^+\cup A^-$. 

For cycles $C$ that are long (meaning they have many vertices in $A^+\cup A^-$), we greedily choose two paths that fit the cycle.
This is possible as each cycle contains a large number of vertices, so there are many choices for the possible absorbing paths, and we can always find two that fit the cycle.
Here, we allow both endpoints of the paths to be on $C$.

For cycles of medium length, we use a flow problem to assign vertices to cycles in such a way that each cycle is assigned a suitably large number of vertices dependent on its length, but such that no vertex is assigned to too many cycles.
It turns out that this choice of assignment means we can find two assigned vertices $v_1$ and $v_2$ per cycle and pick paths $P_i \in f(v_i)$ for $i=1,2$ that fit the cycle.
This strategy is wasteful in the sense that we sometimes assign more than two vertices to a cycle and thereby reserve more absorbing paths than we use.

For cycles that are short, it is easier to find fitting paths, as we are guaranteed to find absorbing paths that have their other endpoint off the cycle, as in the example in \cref{fig:AbsEx}.
However, it is harder to ensure that we do not use too many paths per vertex. 
In this case, we also use a flow problem to assign vertices to cycles, but we take multiple rounds and only decompose certain ``safe'' cycles in each round.
In addition, we absorb certain closed walks in each round, so we need to apply the result by Knierim et al.~between rounds in order to re-decompose the remaining edges into cycles, and this may generate new cycles which are long or of medium length.
Absorbing the short cycles is the most complicated process of the three, but it is the process we apply first so that the long and medium cycles that are produced as a byproduct can be absorbed by the appropriate processes described above.
It is also the only process in which we use the absorbing paths attached to vertices in $A^0$.


\section{Preliminaries}
\label{sec:prel}
\subsection{Basic definitions and notation}

For any $n\in\mathbb{Z}$, we will write $[n]\coloneqq\{i\in\mathbb{Z}:1\leq i\leq n\}$ and $[n]_0\coloneqq\{i\in\mathbb{Z}:0\leq i\leq n\}$.
Whenever we write $a=b\pm c$ for any $a,b,c\in\mathbb{R}$, we mean that $a\in[b-c,b+c]$.
Given any set $X$, $2^X$ denotes the set of all subsets of $X$.
Our logarithms are always natural logarithms.
We use the standard $\bigO$-notation for asymptotic statements, where the asymptotics will always be with respect to a parameter $n$.
Throughout, we ignore rounding whenever it does not affect our arguments.

In this paper, a \emph{digraph} $D=(V(D),E(D))$ is a loopless directed graph where, for each pair of distinct vertices $x,y\in V(D)$, we allow up to two edges between them, at most one in each direction.
We usually denote edges $(x,y)\in E(D)$ simply as $xy$.
The \emph{complement} of~$D$ is a digraph on the same vertex set as $D$ which contains exactly all the edges which are not contained in $D$.
Given any digraph $D$, we write $H\subseteq D$ to mean that $H$ is a \emph{subdigraph} of~$D$, that is, $V(H)\subseteq V(D)$ and $E(H)\subseteq E(D)$.
If $\mathcal{H}$ is a set of subdigraphs of $D$, we will sometimes abuse notation and treat $\mathcal{H}$ as the digraph obtained as the union of the digraphs which comprise~$\mathcal{H}$.
In particular, we will write $V(\mathcal{H})\coloneqq\bigcup_{H\in\mathcal{H}}V(H)$ and $E(\mathcal{H})\coloneqq\bigcup_{H\in\mathcal{H}}E(H)$.
Given any disjoint sets $A,B\subseteq V(D)$, we denote $E_D(A)\coloneqq\{ab\in E(D):a,b\in A\}$ and $E_D(A,B)\coloneqq\{ab\in E(D):a\in A,b\in B\}$.
If one of the sets consists of a single element (say, $A=\{a\}$), we will simplify the notation by setting $E(a,B)\coloneqq E(\{a\},B)$, and similarly for the rest of the notation.
We will write $e_D(A)\coloneqq|E_D(A)|$ and $e_D(A,B)\coloneqq|E_D(A,B)|$.
We denote $D[A]\coloneqq(A,E_D(A))$ for the subdigraph \emph{induced} by $A$ and, similarly, $D[A,B]\coloneqq(A\cup B,E_D(A,B))$ for the bipartite subdigraph \emph{induced} by $(A,B)$.
Given any $E\subseteq E(D)$, we write $D\setminus E\coloneqq(V(D),E(D)\setminus E)$.
Given any vertex $x\in V(D)$, we define its \emph{outneighbourhood} and \emph{inneighbourhood} as $N_D^+(x)\coloneqq\{y\in V(D):xy\in E(D)\}$ and $N_D^-(x)\coloneqq\{y\in V(D):yx\in E(D)\}$, respectively.
The \emph{outdegree} and \emph{indegree} of $x$ are given by $d_D^+(x)\coloneqq |N_D^+(x)|$ and $d_D^-(x)\coloneqq |N_D^-(x)|$, respectively.
Throughout, we may sometimes abuse notation by referring to a digraph by its edge set, especially in subscripts; the vertex set of such digraphs will always be clear from context.

As in the introduction, we define the \emph{excess} at $x$ to be $\ex_D(x)\coloneqq d_D^+(x)-d_D^-(x)$, and similarly define the \emph{positive excess} and \emph{negative excess} at~$x$ as $\ex_D^+(x)\coloneqq\max\{\ex_D(x),0\}$ and $\ex_D^-(x)\coloneqq\max\{-\ex_D(x),0\}$, respectively.
Observe that $\sum_{x\in V(D)}\ex_D(x)=0$.
We define the \emph{excess} of $D$ as 
\[\ex(D)\coloneqq\sum_{x\in V(D)}\ex_D^+(x)=\sum_{x\in V(D)}\ex_D^-(x)=\frac12\sum_{x\in V(D)}|\ex_D(x)|.\]

When we refer to paths, cycles, and walks in digraphs, we mean \emph{directed} paths, cycles, and walks, i.e., the edges are oriented consistently.
Given a digraph $D$, a \emph{walk} $W$ in $D$ is given by a sequence of (not necessarily distinct) vertices $W = v_1v_2 \cdots v_k$ where $v_1v_2, v_2v_3, \ldots, v_{k-1}v_k$ are distinct edges of $D$.
We also think of $W$ as being a subdigraph of $D$ with vertex set $\{v_1, \ldots, v_k\}$ and edge set $\{v_1v_2, v_2v_3, \ldots, v_{k-1}v_k\}$.
We also call $W$ a $(v_1, v_k)$-walk and sometimes denote it by $v_1Wv_k$ to emphasise that it starts at $v_1$ and ends at $v_k$, and we say $W$ is \emph{closed} if $v_1 = v_k$.
For two edge-disjoint walks $W_1=aW_1b = av_1\cdots v_kb$ and $W_2=bW_2c = bv_1'\cdots v_{\ell}'c$, we write $W_1W_2 =av_1\cdots v_kbv_1'\cdots v_{\ell}'c$ for the concatenation of $W_1$ and $W_2$.
This notation extends in the natural way for concatenating more than two walks.
For a walk $W=v_1\cdots v_k$, and $1 \leq i < j\leq k$, we write $v_iWv_j$ for the $(v_i,v_j)$-walk $v_iv_{i+1}\cdots v_j$ between $v_i$ and $v_j$.

In fact, we will mostly be concerned with paths and cycles rather than walks.
A walk $W = v_1 \cdots v_k$ is a \emph{path} if $v_1, \ldots, v_k$ are distinct vertices, and it is a \emph{cycle} if $v_1, \ldots, v_k$ are distinct except that $v_1 = v_k$.
The length of a walk, path, or cycle is the number of edges it contains.
We sometimes also consider degenerate single-vertex paths.
Note that, if $P_1$ is an $(a,b)$-path and $P_2$ is a $(b,c)$-path, where $P_1$ and $P_2$ are vertex-disjoint except at $b$, then $P_1P_2$ is an $(a,c)$-path.
For sets of vertices $X$ and $Y$, we say that a path $P$ is an $(X,Y)$-path if it starts in $X$ and ends in $Y$.

In this paper, we say a digraph $D$ is \emph{Eulerian} if $d^+_D(v)=d^-_D(v)$ for every $v\in V(D)$ or, equivalently, if $\ex(D)=0$.\footnote{This is different from the standard definition, which also asks that $D$ is strongly connected.}
A well-known consequence of this definition is the fact that the edge set of any Eulerian digraph can be decomposed into cycles.

We will sometimes need to consider a \emph{multidigraph} $D$, which is allowed to have multiple edges between any two vertices, in both directions (but it is still loopless).
Whenever $D$ is a multidigraph, all edge sets should be seen as multisets, while all vertex sets will remain simple sets. The notation and terminology above extend in the natural way to multidigraphs.


\subsection{Path and cycle decompositions}

The following definitions are convenient.
\begin{definition}
A \emph{perfect decomposition} of a digraph $D$ is a set  $\mathcal{P} = \{P_1, \ldots, P_r \}$ of edge-disjoint paths of $D$ that together cover $E(D)$  with  $r = \disc(D)$.
(Thus, a digraph $D$ is consistent if and only if it has a perfect decomposition.)
\end{definition}

We will need the following basic facts. 

\begin{proposition}
\label{pr:PartDecomp1}
Let $D$ be a digraph with $\ex(D)>0$.
Then, there exists a path in $D$ from a vertex of positive excess to a vertex of negative excess.
\end{proposition}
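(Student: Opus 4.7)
The plan is to fix a vertex $u$ with $\ex_D(u)>0$ (which exists since $\ex(D)>0$ and excesses sum to zero) and show that some vertex of negative excess is reachable from $u$ by a directed path.

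To this end, I would let $S\subseteq V(D)$ denote the set of all vertices reachable from $u$ via a directed path (including $u$ itself). The key observation is that no edge of $D$ leaves $S$: if $v\in S$ and $vw\in E(D)$, then $w$ is reachable from $u$ by prepending the $(u,v)$-path, so $w\in S$. Consequently, when we compute
\[
\sum_{v\in S}\ex_D(v)=\sum_{v\in S}\bigl(d_D^+(v)-d_D^-(v)\bigr),
\]
every edge with both endpoints in $S$ contributes $+1$ and $-1$ and thus cancels, while edges from $V(D)\setminus S$ to $S$ contribute $-1$, and (by the previous observation) there are no edges from $S$ to $V(D)\setminus S$. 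Hence $\sum_{v\in S}\ex_D(v)=-e_D(V(D)\setminus S,S)\leq 0$.

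Now I would argue by contradiction: if every vertex of $S$ had non-negative excess, then the left-hand side would be at least $\ex_D(u)>0$, contradicting the inequality above. So $S$ contains some vertex $w$ with $\ex_D(w)<0$, and by definition of $S$ there is a directed $(u,w)$-path in $D$, proving the proposition. I do not anticipate any real obstacle; the only thing to check carefully is that $w\neq u$, which is automatic since $\ex_D(u)>0>\ex_D(w)$.
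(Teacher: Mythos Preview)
Your argument is correct. You use the standard reachability/cut observation: the out-closure $S$ of a vertex $u$ of positive excess has no outgoing edges, so $\sum_{v\in S}\ex_D(v)\le 0$, forcing some vertex of negative excess to lie in $S$.

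The paper takes a different but equally short route: it first strips all cycles from $D$ (which leaves every excess unchanged, since $\ex(D)>0$ guarantees at least one edge survives) and then takes any maximal path $P$ in the resulting acyclic digraph $D'$. Maximality together with acyclicity forces the start of $P$ to have in-degree $0$ in $D'$ (hence positive excess) and the end to have out-degree $0$ (hence negative excess). Your approach avoids the cycle-removal preprocessing and instead exploits the global balance equation directly; the paper's approach is perhaps more constructive in flavour, since it exhibits the desired path as any maximal path after peeling off cycles. Both are standard and of comparable length; neither offers a real advantage over the other here.
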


\begin{proof}
First, repeatedly remove cycles from $D$ until this is no longer possible and call the resulting digraph $D'$; note that this does not affect the excess of any vertex.
Now any maximal path $P$ in $D'$ starts at a vertex that has no inneighbours (so it has positive excess) and ends at a vertex that has no outneighbours (so it has negative excess). 
\end{proof}

\begin{proposition}
\label{pr:PartDecomp2}
Suppose $D$ is a digraph, and let $X, Y \subseteq V(D)$ be disjoint.
If $P_1, \ldots, P_k$ are edge-disjoint $(X,Y)$-paths and $E(P_1) \cup \ldots \cup E(P_k) = E(D)$, then $\{P_1, \ldots, P_k\}$ is a perfect decomposition of $D$.
\end{proposition}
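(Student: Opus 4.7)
The plan is to show that $k = \ex(D)$; combined with the hypothesis that the $P_i$ are edge-disjoint and their edges cover $E(D)$, this is exactly the definition of a perfect decomposition, so nothing further is required.

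First I would do the standard bookkeeping at each vertex. Fix $v \in V(D)$ and, for each path $P_i$, record its contribution to $d_D^+(v)$ and $d_D^-(v)$: if $v \notin V(P_i)$ the contribution is $(0,0)$; if $v$ is an internal vertex of $P_i$ it is $(1,1)$; if $v$ is the start of $P_i$ it is $(1,0)$; and if $v$ is the end it is $(0,1)$ (here I use that a path has distinct vertices, so these cases are mutually exclusive). Since the $E(P_i)$ partition $E(D)$, summing these contributions yields
\[
\ex_D(v) \;=\; s(v) - t(v),
\]
where $s(v)$ (resp.\ $t(v)$) denotes the number of paths in $\{P_1,\ldots,P_k\}$ starting (resp.\ ending) at $v$.

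Next I would exploit the disjointness of $X$ and $Y$ together with the fact that every $P_i$ is an $(X,Y)$-path. This forces $s(v) = 0$ for $v \notin X$ and $t(v) = 0$ for $v \notin Y$, so $\ex_D^+(v) = s(v)$ for $v \in X$, $\ex_D^-(v) = t(v)$ for $v \in Y$, and $\ex_D(v) = 0$ otherwise. Summing the positive excesses and using $\sum_{v \in X} s(v) = k$ (each path is counted once at its starting vertex) gives
\[
\ex(D) \;=\; \sum_{v \in X} s(v) \;=\; k,
\]
as required.

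There is no real obstacle: the whole argument is a short double-counting, and the only place the hypotheses are used in an essential way is the disjointness $X \cap Y = \emptyset$, which prevents any vertex from being simultaneously counted as a start and an end and so guarantees that $|\ex_D(v)|$ equals $s(v) + t(v)$ on its support.
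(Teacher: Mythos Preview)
Your proof is correct and is essentially the same argument as the paper's: both show $\ex(D)=k$ by tracking how each $(X,Y)$-path contributes to the excess, using the disjointness of $X$ and $Y$ to ensure the contributions at start- and end-vertices never cancel. The paper phrases this dynamically (add the paths one at a time and observe the excess increases by exactly one each time), whereas you do the same bookkeeping statically via $s(v)$ and $t(v)$; the content is identical.
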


\begin{proof}
If we construct $D$ by adding the $k$ paths one at a time, we notice that the excess increases by one every time a path is added, so that $\ex(D) = k$.
\end{proof}

As mentioned in \cref{sec:sketch}, we will use ``absorbing structures'' (see \cref{def:absstruct}) to absorb Eulerian digraphs.
For this, we will first decompose the Eulerian digraphs into cycles.
We will use the following theorem of \citet{KLMN21} to achieve this.

\begin{theorem}
\label{th:compress}
There exists a constant $c'$ such that every Eulerian digraph $D$ on $n$ vertices can be decomposed into at most $c' n \log n$ edge-disjoint cycles.\footnote{In fact, the result of \citet{KLMN21} is slightly stronger, in the sense that $\log n$ can be replaced by $\log \Delta$, where $\Delta$ is the maximum (out- or in-)degree of $D$.}
\end{theorem}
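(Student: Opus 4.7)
My plan is to prove the theorem by an iterative cycle-extraction procedure combined with a dyadic stratification of the vertex degrees. The central technical ingredient I would aim to establish first is the following cycle lemma: every Eulerian digraph $D$ on $n$ vertices with minimum semi-degree $\delta$ (i.e., every vertex satisfies $d^+(v)=d^-(v)\geq \delta$) contains a directed cycle of length $\Omega(\delta)$. Granting this, the strategy has two stages.

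\textbf{Stage 1 (Dyadic decomposition into approximately regular pieces).} I would partition the edges of $D$ into $O(\log n)$ edge-disjoint \emph{Eulerian} subdigraphs $D_0, D_1, \ldots$, where $D_i$ is roughly $2^i$-regular on its support. Producing a partition that preserves the Eulerian property is the delicate part, since naively grouping edges by the degree class of their endpoints destroys the balance between $d^+(v)$ and $d^-(v)$. My approach would be to peel off successive shells: at each stage extract from $D$ a $2^i$-regular Eulerian spanning subdigraph on the currently high-degree vertices, which is possible by pairing surplus in-edges with surplus out-edges via a flow/matching argument at each vertex.

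\textbf{Stage 2 (Cycle extraction within each piece).} For each $D_i$ of approximate degree $d\coloneqq 2^i$, I would repeatedly apply the cycle lemma to find a cycle of length $\Omega(d)$, remove it, and recurse. Since $D_i$ has $O(nd)$ edges and each step deletes $\Omega(d)$ of them, the process produces at most $O(n)$ cycles before $D_i$ is exhausted. Summing over the $O(\log n)$ dyadic pieces yields a total of $O(n \log n)$ cycles, as required.

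The main obstacle is Stage 1: constructing the dyadic stratification while keeping each piece Eulerian. If one can bypass it and prove directly the stronger lemma that every Eulerian digraph with $m$ edges on $n$ vertices contains a cycle of length $\Omega(m/(n\log n))$, then iterating that lemma alone gives the bound $O(n\log n)$ immediately, but essentially all of the difficulty is relocated into this single long-cycle statement. Either way, the cycle-length lemma itself can plausibly be established by a maximal-path argument combined with closing paths via Eulerian circuits through concentrated degree, or by a random-walk/mixing argument on the Eulerian digraph; controlling the exact polylogarithmic loss in this step is what determines the final constant $c'$.
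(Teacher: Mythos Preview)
This theorem is not proved in the paper; it is quoted as a result of Knierim, Larcher, Martinsson and Noever, so there is no in-paper argument to compare your proposal against.

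That said, your outline has the right overall shape but defers exactly the two steps that carry all the weight. The cycle lemma you posit---that an Eulerian digraph with minimum semi-degree $\delta$ contains a directed cycle of length $\Omega(\delta)$---is not a standard fact, and the maximal-path argument you allude to does not yield it: the terminal vertex of a longest directed path has all its $\delta$ out-neighbours on the path, so the \emph{path} has length at least $\delta$, but the cycle one closes off by following one such out-edge may have length as small as~$2$. Finding long directed cycles (as opposed to long paths) in digraphs of high minimum semi-degree is a genuinely hard problem, and the published proofs do not establish anything as clean as your lemma; the logarithmic factor in the final bound arises precisely from the weaker cycle-length guarantee one can actually prove. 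Stage~1 is likewise not routine: extracting an approximately $2^i$-regular \emph{Eulerian} subdigraph from an Eulerian digraph of minimum semi-degree at least $2^i$ is not a consequence of any off-the-shelf factor theorem, and ``pairing surplus in-edges with surplus out-edges via a flow/matching argument at each vertex'' is a local operation that does not by itself produce a global Eulerian subgraph of the required regularity. In short, your plan correctly locates where the difficulty lies, but the two deferred lemmas are essentially the content of the cited paper rather than preliminaries to it.
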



\subsection{Flows}
\label{subsect:flows}
We recall some common definitions and facts about flow networks.
We note that flows are only used in the proofs of \cref{lem:absorbc2,lem:absorbc3}.

A \emph{flow network} is a tuple $(F,w,s,t)$, where $F=(V,E)$ is a digraph, $w\colon E\to \mathbb{R}$ is the \emph{capacity} function, and $s\in V$ is a \emph{source} (i.e., it only has outedges incident to it) and $t \in V$ is a \emph{sink} (i.e., it only has inedges incident to it).
A \emph{flow} for the flow network $(F,w,s,t)$ is a function $\phi\colon E\to \mathbb{R}^+$ such that, for all $e\in E$, we have $\phi(e)\leq w(e)$ and, for all $v\in V\setminus\{s,t\}$, we have $\sum_{u\in N^-_F(v)}\phi(uv)=\sum_{u\in N^+_F(v)}\phi(vu)$.
We define the \emph{value} of $\phi$ as $val(\phi)\coloneqq \sum_{v\in N^+_F(s)}\phi(sv)$.
A \emph{maximum flow} on a given flow network is a flow $\phi$ that maximises $val(\phi)$.

A partition $(U,W)$ of $V$ with $s\in U$, $t\in W$ is called a \emph{cut}, and we call the edge set $E_F(U,W)$ its corresponding \emph{cut-set}.
The \emph{capacity} $w((U,W))$ of a cut $(U,W)$ is the sum of the capacities of the edges of its cut-set, i.e., $w((U,W)) \coloneqq w(E_F(U,W))\coloneqq \sum_{e \in E_F(U,W)}w(e)$. 
A \emph{minimum cut} of the given flow network is a cut of minimum capacity. 
We make use of the following well-known theorem.

\begin{theorem}[Max-flow min-cut \cite{ford_fulkerson_1956}]
\label{thm:maxflow}
For every flow network with maximum flow $\phi$ and minimum cut $(U,W)$ we have that $\mathit{val}(\phi)=\mathit{w}((U,W))$.
\end{theorem}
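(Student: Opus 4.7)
The plan is to prove both directions of weak and strong duality in the standard way, noting that this is a classical result so the goal is merely to outline the argument rather than develop anything new.

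First I would establish weak duality: for any flow $\phi$ and any cut $(U,W)$, the inequality $\mathit{val}(\phi)\le w((U,W))$ holds. The key observation is that summing the conservation constraint $\sum_{u\in N^-_F(v)}\phi(uv)=\sum_{u\in N^+_F(v)}\phi(vu)$ over all $v\in U\setminus\{s\}$, together with the definition of $\mathit{val}(\phi)$ at $s$, yields
\[
\mathit{val}(\phi)=\sum_{e\in E_F(U,W)}\phi(e)-\sum_{e\in E_F(W,U)}\phi(e).
\]
Since $0\le \phi(e)\le w(e)$ for every edge, the right-hand side is at most $w((U,W))$. This already shows $\max\mathit{val}(\phi)\le\min w((U,W))$.

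For the reverse inequality, I would invoke the Ford--Fulkerson augmenting-path construction. Starting from the zero flow, repeatedly form the residual network $F_\phi$ (which contains a forward edge $uv$ whenever $\phi(uv)<w(uv)$ and a backward edge $vu$ whenever $\phi(uv)>0$), and augment $\phi$ along any $s$--$t$ path in $F_\phi$; each augmentation strictly increases $\mathit{val}(\phi)$. When no such path exists, let $U$ be the set of vertices reachable from $s$ in $F_\phi$ and set $W\coloneqq V\setminus U$; by hypothesis $t\in W$, so $(U,W)$ is a cut. Unreachability forces $\phi(e)=w(e)$ for every $e\in E_F(U,W)$ and $\phi(e)=0$ for every $e\in E_F(W,U)$, so the identity from the previous paragraph gives $\mathit{val}(\phi)=w((U,W))$, matching the weak-duality bound.

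The main subtlety — which is also the main obstacle in a fully rigorous write-up — is termination and existence of a maximum flow when capacities are arbitrary reals: one must either restrict to rational capacities (in which case Ford--Fulkerson terminates) or appeal to a compactness/LP-duality argument to guarantee that the supremum of flow values is attained. In our application in \cref{lem:absorbc2,lem:absorbc3}, the networks used have integer capacities, so the elementary augmenting-path analysis suffices and the theorem may be invoked directly from \cite{ford_fulkerson_1956}.
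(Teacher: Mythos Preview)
Your outline is the standard and correct proof of the max-flow min-cut theorem, but note that the paper does not actually prove this statement: it is quoted as a well-known classical result with a citation to \cite{ford_fulkerson_1956}, and no proof is given. So there is nothing to compare against; your write-up simply supplies a proof where the paper (appropriately) chose to omit one.
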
 

An easy fact states that, if all edge capacities are integers, then there exists a maximum flow such that all flow values are integers.

Given a flow $\phi$ on a flow network $(F,w,s,t)$, we define the \emph{residual digraph} $G_\phi$ of $G$ under $\phi$ as a directed graph with vertex set $V$ and edge set $\{uv\in E: \phi(uv)<w(uv)\}\cup\{vu: uv\in E, \; \phi(uv)>0\}$.
An $(s,t)$-path in a residual graph $G_\phi$ is called an \emph{augmenting path}, and it is easy to see that an augmenting path exists in $G_\phi$ if and only if $\phi$ is not a maximum flow.


\subsection{Random digraphs and probabilistic estimates}

In \cref{sec:decomprandom}, we begin working with random digraphs in the binomial model (although we also introduce slight variants of this model in the proofs of \cref{lem:absstruct1,lem:absstruct2}).
We denote by $D_{n,p}$ a random digraph on vertex set $[n]$ obtained by adding each of the possible $n^2-n$ edges with probability $p$, independently of all other edges.
Most of our results will be asymptotic in nature.
In particular, given a (di)graph property $\mathcal{P}$ and a sequence of random (di)graphs $\{G_i\}_{i>0}$ with $|V(G_i)|\to\infty$ as $i\to\infty$, we say that $G_i$ satisfies $\mathcal{P}$ \emph{asymptotically almost surely} (a.a.s.) if $\mathbb{P}[G_i\in\mathcal{P}]\to1$ as $i\to\infty$.

We will need to prove concentration results for different random variables.
For this, we will often use Chernoff bounds (see, e.g., the book of \citet[Corollary~2.3]{JLR}).

\begin{lemma}\label{lem:Chernoff}
Let $X$ be the sum of\/ $n$ mutually independent Bernoulli random variables, and let $\mu\coloneqq\mathbb{E}[X]$.
Then, for all $\delta\in(0,1)$ we have that $\mathbb{P}[X\geq(1+\delta)\mu]\leq e^{-\delta^2\mu/3}$ and $\mathbb{P}[X\leq(1-\delta)\mu]\leq e^{-\delta^2\mu/2}$.
In particular, $\mathbb{P}[|X-\mu|\geq\delta\mu]\leq2e^{-\delta^2\mu/3}$.
\end{lemma}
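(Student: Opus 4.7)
The plan is to apply the standard exponential moment (Chernoff--Bernstein) method. Write $X=\sum_{i=1}^n X_i$, where $X_i\sim\mathrm{Bernoulli}(p_i)$ are independent, so that $\mu=\sum_i p_i$. For any $t>0$, Markov's inequality applied to $e^{tX}$ gives
\[
\mathbb{P}[X\geq(1+\delta)\mu]\;\leq\;e^{-t(1+\delta)\mu}\,\mathbb{E}[e^{tX}].
\]
By independence, $\mathbb{E}[e^{tX}]=\prod_i\mathbb{E}[e^{tX_i}]=\prod_i(1+p_i(e^t-1))$, and the elementary inequality $1+x\leq e^x$ yields $\mathbb{E}[e^{tX}]\leq\exp(\mu(e^t-1))$. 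I would then optimise $t$ by taking $t=\log(1+\delta)>0$, which gives
\[
\mathbb{P}[X\geq(1+\delta)\mu]\;\leq\;\left(\frac{e^{\delta}}{(1+\delta)^{1+\delta}}\right)^{\!\mu}.
\]

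For the upper-tail bound $e^{-\delta^2\mu/3}$ it remains to verify the purely analytic inequality $(1+\delta)\log(1+\delta)-\delta\geq\delta^2/3$ for $\delta\in(0,1)$. This is a one-variable calculus exercise: define $g(\delta)\coloneqq(1+\delta)\log(1+\delta)-\delta-\delta^2/3$, compute $g(0)=0$, $g'(0)=0$, and check that $g''(\delta)=1/(1+\delta)-2/3\geq0$ throughout $\delta\in(0,1)$. For the lower tail, an entirely symmetric argument using Markov's inequality on $e^{-tX}$ with $t=-\log(1-\delta)>0$ produces
\[
\mathbb{P}[X\leq(1-\delta)\mu]\;\leq\;\left(\frac{e^{-\delta}}{(1-\delta)^{1-\delta}}\right)^{\!\mu},
\]
and the corresponding elementary inequality $(1-\delta)\log(1-\delta)+\delta\geq\delta^2/2$ for $\delta\in(0,1)$ (again verified by a Taylor-style second-derivative argument) gives the bound $e^{-\delta^2\mu/2}$.

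Finally, the two-sided bound in the ``in particular'' clause follows from the union bound applied to the two one-sided bounds, using that $1/3<1/2$ so the weaker upper-tail exponent dominates, yielding the factor $2$ in $2e^{-\delta^2\mu/3}$.

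The only genuinely non-routine steps are the two elementary inequalities for $(1\pm\delta)\log(1\pm\delta)$; neither is hard, but they are the main place to be careful, and they are precisely what determines the constants $1/3$ and $1/2$ in the statement. Since the lemma is a standard textbook result quoted from \cite{JLR}, I would in practice simply cite it rather than reproducing the computation; the sketch above is the route one would take if a self-contained argument were required.
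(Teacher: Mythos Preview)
The paper does not prove this lemma; it simply cites \cite[Corollary~2.3]{JLR} as a standard result, which is exactly what you say you would do in practice. Your sketch of the exponential-moment argument is the standard route and is essentially correct, but there is one slip in the upper-tail analytic step: with $g(\delta)=(1+\delta)\log(1+\delta)-\delta-\delta^2/3$ you have $g''(\delta)=1/(1+\delta)-2/3$, which is \emph{not} nonnegative throughout $(0,1)$; it becomes negative for $\delta>1/2$. The inequality $g(\delta)\geq0$ is still true on $(0,1)$ --- one can check, for instance, that $g'(\delta)=\log(1+\delta)-2\delta/3$ stays nonnegative on $(0,1)$ (since $g'(0)=0$, $g'$ is increasing on $(0,1/2]$, and $g'(1)=\log 2-2/3>0$ with $g'$ decreasing on $[1/2,1]$) --- but the convexity argument as you stated it does not go through. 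Your lower-tail computation with $h''(\delta)=\delta/(1-\delta)\geq0$ is fine.
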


The following Chernoff-type bound extends \cref{lem:Chernoff} to allow us to deal with large deviations (see, e.g., the book of \citet[Theorem~A.1.12]{AS16}).

\begin{lemma}\label{lem:betaChernoff}
Let $X$ be the sum of\/ $n$ mutually independent Bernoulli random variables.
Let $\mu \coloneqq \mathbb{E}[X]$, and let $\beta>1$.
Then, $\mathbb{P}[X\geq\beta\mu]\leq\left(e/\beta\right)^{\beta\mu}$.
\end{lemma}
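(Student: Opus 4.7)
The plan is to prove this by the standard exponential moment (Chernoff) method, since the bound is a classical tail inequality for independent Bernoulli sums. Write $X = \sum_{i=1}^n X_i$ where each $X_i$ is Bernoulli with parameter $p_i$, so that $\mu = \sum_i p_i$.

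First I would apply Markov's inequality to $e^{tX}$ for an unspecified parameter $t > 0$, obtaining
\[
\mathbb{P}[X \geq \beta \mu] \;=\; \mathbb{P}[e^{tX} \geq e^{t\beta\mu}] \;\leq\; e^{-t\beta\mu}\,\mathbb{E}[e^{tX}].
\]
By independence of the $X_i$'s, $\mathbb{E}[e^{tX}] = \prod_{i=1}^n \mathbb{E}[e^{tX_i}] = \prod_{i=1}^n (1 + p_i(e^t - 1))$. Using the inequality $1 + x \leq e^x$ (valid for all real $x$) on each factor yields
\[
\mathbb{E}[e^{tX}] \;\leq\; \prod_{i=1}^n e^{p_i(e^t-1)} \;=\; e^{\mu(e^t - 1)}.
\]
Combining with the Markov step gives $\mathbb{P}[X \geq \beta\mu] \leq \exp(\mu(e^t - 1) - t\beta\mu)$ for every $t > 0$.

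Next I would optimise the choice of $t$. Differentiating the exponent in $t$, the optimal value is $t = \log \beta$, which is positive since $\beta > 1$. Substituting back, the exponent becomes $\mu(\beta - 1) - \beta\mu \log \beta$, so
\[
\mathbb{P}[X \geq \beta\mu] \;\leq\; e^{\mu(\beta - 1)}\,\beta^{-\beta\mu} \;=\; e^{-\mu}\left(\frac{e}{\beta}\right)^{\beta\mu} \;\leq\; \left(\frac{e}{\beta}\right)^{\beta\mu},
\]
where in the last step I use $e^{-\mu} \leq 1$ (the case $\mu = 0$ being trivial). This gives exactly the claimed bound.

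There is essentially no serious obstacle here: the only ``choice'' to make is the value of $t$, and $t = \log \beta$ is forced by a one-line calculus argument. The proof is short and self-contained modulo Markov's inequality and the elementary estimate $1 + x \leq e^x$, both of which are standard. If I wanted to slightly strengthen the statement, I could keep the $e^{-\mu}$ factor, but the form $(e/\beta)^{\beta\mu}$ is the cleaner one used in the applications throughout the paper.
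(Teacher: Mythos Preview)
Your proof is correct and is the standard exponential-moment argument. The paper does not actually prove this lemma; it simply cites it as a known Chernoff-type bound from the book of Alon and Spencer \cite[Theorem~A.1.12]{AS16}.
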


We will sometimes deal with random variables which are not independent, in which case we cannot obtain concentration results as above.
To deal with them, we will need the following version of the well-known Azuma-Hoeffding inequality (see, e.g., \cite[Theorem~2.25]{JLR}).
Given any sequence of random variables $X=(X_1,\ldots,X_n)$ taking values in a set $\Omega$ and a function $f\colon \Omega^n\to\mathbb{R}$, for each $i\in[n]_0$ define $Y_i\coloneqq\mathbb{E}[f(X)\mid X_1,\ldots,X_i]$.
The sequence $Y_0,\ldots,Y_n$ is called the \emph{Doob martingale} for $f$ and $X$.
All the martingales that appear in this paper will be of this form.

\begin{lemma}[Azuma's inequality]\label{lem:Azuma}
Let $Y_0,\ldots,Y_n$ be a martingale and suppose $|Y_i-Y_{i-1}|\leq c_i$ for all $i\in[n]$.
Then, for any $t>0$,
\[\mathbb{P}[|Y_n-Y_0|\geq t]\leq2\exp\left(\frac{-t^2}{2\sum_{i=1}^nc_i^2}\right).\]
\end{lemma}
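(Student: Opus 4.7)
The plan is to use the standard exponential moment (Chernoff-type) method adapted to martingales. Let $D_i \coloneqq Y_i - Y_{i-1}$ for $i \in [n]$ denote the martingale differences; by the martingale property $\mathbb{E}[D_i \mid X_1,\ldots,X_{i-1}] = 0$, and by hypothesis $|D_i| \leq c_i$. The goal is to bound the moment generating function $\mathbb{E}[e^{\lambda(Y_n - Y_0)}]$ for a suitable parameter $\lambda > 0$ and then apply Markov's inequality.

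The key technical step is the pointwise bound: for any random variable $D$ with $\mathbb{E}[D]=0$ and $|D|\leq c$, one has $\mathbb{E}[e^{\lambda D}] \leq e^{\lambda^2 c^2/2}$. I would prove this by writing $D$ as a convex combination of $-c$ and $c$ (namely, $D = \tfrac{c+D}{2c}\cdot c + \tfrac{c-D}{2c}\cdot(-c)$) and using convexity of $x \mapsto e^{\lambda x}$ to obtain $e^{\lambda D} \leq \tfrac{c+D}{2c}e^{\lambda c} + \tfrac{c-D}{2c}e^{-\lambda c}$; taking expectation gives $\mathbb{E}[e^{\lambda D}] \leq \cosh(\lambda c)$, and a term-by-term comparison of Taylor series shows $\cosh(\lambda c) \leq e^{\lambda^2 c^2/2}$. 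Applied conditionally on $X_1,\ldots,X_{i-1}$, this yields $\mathbb{E}[e^{\lambda D_i}\mid X_1,\ldots,X_{i-1}] \leq e^{\lambda^2 c_i^2/2}$ almost surely.

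Iterating via the tower property, and using that $e^{\lambda(Y_{i-1}-Y_0)}$ is measurable with respect to $X_1,\ldots,X_{i-1}$ and hence factors out of the inner conditional expectation, I get
\[
\mathbb{E}[e^{\lambda(Y_n-Y_0)}] \leq \prod_{i=1}^n e^{\lambda^2 c_i^2/2} = \exp\!\left(\tfrac{\lambda^2}{2}\sum_{i=1}^n c_i^2\right).
\]
Combining with Markov's inequality yields $\mathbb{P}[Y_n - Y_0 \geq t] \leq \exp(-\lambda t + \tfrac{\lambda^2}{2}\sum c_i^2)$, and optimising by taking $\lambda = t/\sum c_i^2$ gives $\exp(-t^2/(2\sum c_i^2))$. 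Applying the identical argument to the martingale $(-Y_i)_{i=0}^n$ (which has the same increment bounds) produces the matching lower-tail bound, and a union bound delivers the factor of $2$ in the statement.

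There is no real obstacle here beyond routine calculation; if anything, the only point needing care is the measure-theoretic bookkeeping in the tower-property step, ensuring that the convexity/MGF bound transfers correctly to the conditional setting. The whole proof is classical and self-contained once the conditional MGF bound is in hand.
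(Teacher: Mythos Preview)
Your proof is correct and is the standard exponential-moment argument for Azuma's inequality. The paper, however, does not prove this lemma at all: it simply quotes it as a well-known result (citing \cite[Theorem~2.25]{JLR}), so there is no ``paper's own proof'' to compare against. Your write-up is a complete and self-contained proof where the paper offers only a reference.
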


We will also make use of the following well-known inequality; see, e.g., \cite[Theorem~368]{HLP}.

\begin{lemma}[rearrangement inequality]\label{lem:rearrangement}
Let $n\in\mathbb{N}$, and let $x_1\leq\ldots\leq x_n$ and $y_1\leq\ldots\leq y_n$ be real numbers.
Let $\sigma\in\mathfrak{S}_n$ be an arbitrary permutation.
Then,
\[\sum_{i=1}^nx_iy_{n+1-i}\leq\sum_{i=1}^nx_iy_{\sigma(i)}\leq\sum_{i=1}^nx_iy_i.\]
\end{lemma}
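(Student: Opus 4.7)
The plan is to prove the rearrangement inequality by an exchange (or ``bubble sort'') argument. Given any permutation $\sigma\in\mathfrak{S}_n$, set $S(\sigma)\coloneqq\sum_{i=1}^n x_i y_{\sigma(i)}$. My goal is to show $S(\sigma)\leq S(\mathrm{id})$ for the upper bound; the lower bound is completely symmetric after replacing $(y_1,\ldots,y_n)$ by $(-y_n,\ldots,-y_1)$, so I will only describe the upper bound in detail.

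The key observation is the following two-index swap. Suppose $i<j$ but $\sigma(i)>\sigma(j)$ (an inversion of $\sigma$), and let $\sigma'$ be the permutation obtained from $\sigma$ by exchanging the values at positions $i$ and $j$, i.e., $\sigma'(i)=\sigma(j)$, $\sigma'(j)=\sigma(i)$, and $\sigma'(k)=\sigma(k)$ for all $k\notin\{i,j\}$. Then a direct computation gives
\[
S(\sigma')-S(\sigma)=x_iy_{\sigma(j)}+x_jy_{\sigma(i)}-x_iy_{\sigma(i)}-x_jy_{\sigma(j)}=(x_j-x_i)(y_{\sigma(i)}-y_{\sigma(j)})\geq 0,
\]
since $x_j\geq x_i$ and $y_{\sigma(i)}\geq y_{\sigma(j)}$ by monotonicity and the assumption $\sigma(i)>\sigma(j)$. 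Hence every inversion-removing swap weakly increases $S$.

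Now I would conclude by induction on the number of inversions of $\sigma$. If $\sigma$ has zero inversions, then $\sigma=\mathrm{id}$ and $S(\sigma)=\sum_{i=1}^n x_iy_i$, as required. Otherwise, pick any inversion and apply the swap above; this produces a permutation $\sigma'$ with strictly fewer inversions and $S(\sigma')\geq S(\sigma)$. By induction $S(\sigma')\leq S(\mathrm{id})$, giving $S(\sigma)\leq S(\mathrm{id})$. For the lower bound, the analogous swap argument shows that any inversion-\emph{creating} swap weakly decreases $S$, so iterating from $\sigma$ toward the reversal permutation $i\mapsto n+1-i$ yields $S(\sigma)\geq \sum_{i=1}^n x_iy_{n+1-i}$.

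I do not anticipate a real obstacle: the only thing to check carefully is that the swap identity is written with the correct signs, and that both extremal permutations (identity and reversal) are reachable from an arbitrary $\sigma$ by inversion-removing or inversion-creating transpositions, which is a standard property of the symmetric group (any permutation can be sorted by adjacent transpositions, and each such transposition changes the inversion count by exactly one).
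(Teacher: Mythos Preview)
Your argument is correct; the exchange/inversion-counting proof is the standard route to the rearrangement inequality. Note, however, that the paper does not actually supply a proof of this lemma: it is stated as a classical fact with a citation to Hardy--Littlewood--P\'olya, so there is nothing to compare against. One small remark: when you swap a non-adjacent inversion $(i,j)$ you should, strictly speaking, check that the total inversion count drops (it does, since for each intermediate position $k$ the contribution of the pairs $(i,k)$ and $(k,j)$ can only weakly decrease), but your closing sentence already notes that restricting to adjacent transpositions suffices and avoids this check entirely.
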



\section{Optimal path decompositions of digraphs}
\label{sec:pathdecomp}
In this section we give sufficient conditions for a digraph to be consistent.
These conditions will ensure that our digraph has a certain absorbing structure, and the absorbing structure will help us to decompose $D$ into $\ex(D)$ paths.

We begin by defining the classes of digraphs we will be working with throughout the rest of the paper.

\begin{definition}\label{def:digraphclass1}
Fix $p \in [0,1]$ and $0 \leq \lambda, \kappa \leq n$.
We say that $D=(V,E)$ is an \emph{$(n,p,\kappa,\lambda)$-digraph} if $|V|=n$ and the vertex set $V$ can be partitioned into three parts, $A^+$, $A^-$ and $A^0$ (where $A^0$ may be empty), in such a way that the following properties are satisfied:
\begin{enumerate}[label=$(\mathrm{P}\arabic*)$]
    \item\label{def:digraphitem1} For every $v\in A^+$ we have $\ex_D(v)\geq155\kappa$ and $np/4\leq e_D(v,A^-)\leq np$.
    \item\label{def:digraphitem2} For every $v\in A^-$ we have $\ex_D(v)\leq-155\kappa$ and $np/4\leq e_D(A^+,v)\leq np$.
    \item\label{def:digraphitem3} For every $v\in A^+\cup A^-$ we have $e_D(v,A^0),e_D(A^0,v)\leq \lambda$.
    \item\label{def:digraphitem4} For every $v\in A^0$ we have $e_D(A^+,v)\geq np/3$ and $e_D(v,A^-)\geq np/3$.
\end{enumerate}

We say that $D$ is an \emph{$(n,p,\kappa,\lambda)$-pseudorandom digraph} if it is an $(n,p,\kappa,\lambda)$-digraph and, additionally, the following property holds:
\begin{enumerate}[label=$(\mathrm{P}\arabic*)$,start=5]
    \item\label{def:digraphitem5} For every set $U\subseteq V$ with $|U|\geq\log n/(50p)$ we have $e_D(U)\leq100|U|^2p$.
\end{enumerate}
\end{definition}

Whenever we are given an $(n,p,\kappa,\lambda)$-digraph, we implicitly consider a partition of its vertex set into sets $A^+$, $A^-$ and $A^0$ which satisfy the properties described in \cref{def:digraphclass1}.
This partition is not necessarily unique; throughout this section, we simply assume that one such partition is given.
We will write $\dot{A}\coloneqq A^+\cup A^-$.

\begin{remark}\label{remark:newpseudorandom}
If $D$ is an $(n,p,\kappa,\lambda)$-(pseudorandom) digraph and $\kappa'\leq\kappa$ and $\lambda'\geq\lambda$, then $D$ is an $(n,p,\kappa',\lambda')$-(pseudorandom) digraph.
\end{remark}

We will see in \cref{sec:decomprandom} that a.a.s.~$D_{n,p}$ is an $(n,p,\kappa,\lambda)$-pseudorandom digraph, for a suitable choice of parameters. 
Our goal in this section is to prove the following theorem.

\begin{theorem}\label{thm:main}
There exists $n_0 \in \mathbb{N}$ with the following property. Suppose
 $n\in\mathbb{N}$, $p\in(0,1)$ and $\kappa,\lambda\in\mathbb{R}$ are parameters satisfying $n \geq n_0$ and
\begin{enumerate}[label=$(\mathrm{C}\arabic*)$]
     \item\label{thmitem1} $\kappa = 3N^{2/5}$,
    \item\label{thmitem2} $np \geq 365N^{2/5}$,  and
    \item\label{thmitem3} $\lambda = \min\{{np}/{3}, {\kappa^2}/{12}\}$,
\end{enumerate}
where $N\coloneqq c'n\log n$ and $c'$ is the constant from \cref{th:compress}.
Then, any $(n,p,\kappa,\lambda)$-digraph $D$ admits a perfect decomposition.

The same conclusion holds if $D$ is an $(n,p,\kappa,\lambda)$-pseudorandom digraph and \ref{thmitem1} and \ref{thmitem2} are replaced by
\begin{enumerate}[label=$(\mathrm{C}'\arabic*)$]
    \item\label{thmitem1'}  $\kappa = 6(N^2p)^{1/5}$, and
    \item\label{thmitem2'} $p \geq  n^{-1/3} \log^4 n$.
\end{enumerate}
\end{theorem}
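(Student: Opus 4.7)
The plan is to execute the three-stage absorption strategy sketched in \cref{sec:sketch}. Let $D$ be an $(n,p,\kappa,\lambda)$-(pseudorandom) digraph satisfying the hypotheses.

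First, I would invoke the absorber-construction results (\cref{lem:absstruct1,lem:absstruct2}, stated later in this section) to extract an absorbing structure $\mathcal{A} \subseteq D$ together with an assignment $f$ of paths of $\mathcal{A}$ to vertices: for $v \in A^+$, $f(v)$ is a set of out-edges from $v$ to $A^-$; symmetrically for $v \in A^-$; and for $v \in A^0$, $f(v)$ is a set of length-$2$ paths of the form $A^+ \to v \to A^-$. Parameters \ref{thmitem1}--\ref{thmitem3} (and their primed analogues) are tuned so that $|f(v)|$ can be made comfortably large (polynomial in $\kappa$ and $np$) while the partial excess $\ex_{\mathcal{A}}(v)$ stays well below $\ex_D(v)$ at every $v$. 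Next, I would apply \cref{pr:PartDecomp1} repeatedly to $D \setminus E(\mathcal{A})$, peeling off edge-disjoint $(A^+,A^-)$-paths into a family $\mathcal{P}$ until the remainder $D'$ has zero excess; by construction $|\mathcal{A}| + |\mathcal{P}| = \ex(D)$. It remains to distribute the edges of the Eulerian digraph $D'$ into the paths already laid down without changing their count.

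To do this, apply \cref{th:compress} to $D'$ to obtain a decomposition into at most $N = c'n\log n$ cycles. Each cycle $C$ can then be absorbed by locating two paths $P_1, P_2 \in \mathcal{A}$ that fit $C$ (so that $C \cup P_1 \cup P_2$ decomposes into exactly two paths, cf.~\cref{fig:AbsEx}), and swapping $P_1, P_2$ for the two new paths in the running decomposition — which is excess-preserving. I would classify the cycles by the number of their vertices in $\dot A$ and handle them in three phases, corresponding to \cref{lem:absorbc3}, \cref{lem:absorbc2}, and a greedy lemma for long cycles. Short cycles are processed first, across several rounds: in each round, a max-flow assignment (\cref{subsect:flows}) selects which short cycles to absorb using vertices from $A^0$ as well as $\dot A$, after which the still-uncovered edges — now possibly forming closed walks rather than cycles — are re-decomposed via another application of \cref{th:compress}, any newly created long or medium cycles being deferred to the subsequent phases. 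Medium cycles are then absorbed through a similar max-flow assignment calibrated so that no vertex is overused. Finally, long cycles are handled greedily: each contains enough vertices of $\dot A$ to guarantee two fitting absorbing paths directly.

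The main obstacle is the short-cycle phase. The number of short cycles passing through a single vertex may be comparable to $|f(v)|$, so the flow arguments and the round-by-round iteration must be sharp. The pseudorandomness condition \ref{def:digraphitem5} is invoked to bound how many short cycles can concentrate on any small vertex subset — this underpins the min-cut estimates via \cref{thm:maxflow} — and conditions \ref{thmitem1}, \ref{thmitem3} (and their primed counterparts) are calibrated so that, summed across all rounds and all three cycle classes, the cumulative demand on $f(v)$ never exceeds its initial size for any $v$. Once this balance is established, $\mathcal{P}$ together with the paths produced by the successive absorptions forms exactly $\ex(D)$ edge-disjoint paths covering $E(D)$, i.e., a perfect decomposition of $D$, as required.
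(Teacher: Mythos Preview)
Your outline follows the paper's proof closely: build absorbing structures via \cref{lem:absstruct1,lem:absstruct2}, peel off paths until what remains is Eulerian, decompose into cycles via \cref{th:compress}, and absorb short, medium, then long cycles in that order. Two points deserve correction, however.

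First, you misplace where the pseudorandomness condition \ref{def:digraphitem5} enters. It is \emph{not} used for short cycles: \cref{lem:absorbc3} needs only $\kappa \geq 4N/n$ and works for any $(n,p,\kappa,\lambda)$-digraph. The pseudorandom hypothesis is invoked solely in \cref{lem:absorbc2} (medium cycles), where the min-cut argument requires a bound on $e_D(T)$ for the set $T$ cut off from the sink; without \ref{def:digraphitem5} one falls back on the trivial bound $e_D(T)\leq |T|^2$, which forces the stronger condition $\kappa \geq (72N^2)^{1/5}$ of \ref{thmitem1}. Relatedly, the paper identifies the medium-cycle phase, not the short one, as the bottleneck for the range of $p$.

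Second, the paths you peel off from $D\setminus E(\mathcal{A})$ via \cref{pr:PartDecomp1} are not $(A^+,A^-)$-paths in general: they run from vertices of positive excess to vertices of negative excess, and such vertices may lie in $A^0$. The paper handles this by introducing $B^+,B^-$ (the sets of vertices of strictly positive and negative excess), verifying $A^\pm \subseteq B^\pm$, checking that removing the absorbing edges does not flip the sign of any vertex's excess (this is exactly why the $155\kappa$ bound in \ref{def:digraphitem1}--\ref{def:digraphitem2} matches the edge count from \cref{lem:absstruct1,lem:absstruct2}), and then applying \cref{pr:PartDecomp2} with $(B^+,B^-)$ rather than $(A^+,A^-)$.
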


Observe that, by \cref{remark:newpseudorandom}, we can extend \cref{thm:main} to any $(n,p,\kappa,\lambda)$-(pseudorandom) digraph where $\kappa$ is larger than the value given in \ref{thmitem1} or \ref{thmitem1'}, respectively, and $\lambda$ is smaller than the value given in \ref{thmitem3}.

We further remark that the constants in \cref{thm:main} as well as in \cref{def:digraphclass1} are not optimal.
In fact, there is a trade-off between some of them: by making one worse, others can be improved.
In order to ease readability, we refrain from stating the most general result possible, and simply note that a host of similar statements, with different constants, can be obtained by going through the proofs of the lemmas in this section.
Furthermore, we note that some of the conditions in \cref{def:digraphclass1} can be relaxed; in particular, \ref{def:digraphitem3} is only used in the proof of \cref{lem:absorbc2}, where only one of the two bounds stated in \ref{def:digraphitem3} is required.
Thus, as long as all vertices in $A^+ \cup A^-$ satisfy one (and the same) of the two bounds, \cref{thm:main} still holds, so it can be applied to a larger class of digraphs than stated in \cref{def:digraphclass1}.

Assuming \cref{thm:main}, we give the proof of \cref{thm:determintro}.

\begin{proof}[Proof of \cref{thm:determintro}]
We set $n_0$ as in \cref{thm:main} and $c\coloneqq 500 {(c')}^{2/5}$, where $c'$ is the constant from \cref{th:compress}.
Then, properties \ref{mainintroitem1}--\ref{mainintroitem4} of \cref{thm:determintro} and our choice of $A^+,A^-$ and $A^0$ correspond to \ref{def:digraphitem1}--\ref{def:digraphitem4} with $t\coloneqq c (n \log n)^{2/5}$, $d$, and $\min\{d/3,t^2/10^6\}$ playing the roles of $155\kappa$, $np$, and $\lambda$, respectively, so $D$ is an $(n,p,\kappa,\lambda)$-digraph.
By \cref{remark:newpseudorandom} and our choice of $t$ and $d$, we then conclude that $D$ is also an $(n,p,\kappa',\lambda')$-digraph which satisfies properties \ref{thmitem1}--\ref{thmitem3} of \cref{thm:main}\COMMENT{We have that $\kappa=t/155=500N^{2/5}/155>3N^{2/5}$, so \ref{thmitem1} holds.
\ref{thmitem2} holds since $d\geq t=500 N^{2/5}>365N^{2/5}$.
Finally,
\[\min\left\{\frac{d}{3},\frac{t^2}{10^6}\right\}=\min\left\{\frac{np}{3},\frac{500^2N^{4/5}}{10^6}\right\}\leq\min\left\{\frac{np}{3},\frac{9N^{4/5}}{12}\right\},\]
so \ref{thmitem3} holds too.}.
Thus, we may apply \cref{thm:main} and $D$ is consistent.
\end{proof}

\subsection{Finding absorbing structures}

The next definition describes the absorbing structure that we will find in $(n,p, \kappa, \lambda)$-digraphs $D$.
It will be used to absorb the majority of edges of $D$ into a set of $(A^+,A^-)$-paths that will end up being part of our perfect decomposition.
We will essentially show that, when we take an edge-disjoint union of our absorbing structure with any Eulerian subdigraph of $D$, the resulting digraph has a perfect decomposition.

\begin{definition}\label{def:absstruct}
Let $D$ be an $(n,p,\kappa,\lambda)$-digraph, and let $Z\subseteq V(D)$ and $t\in \mathbb{N}$.
A \emph{$(Z,t)$-absorbing structure} is a pair $\mathcal{A}=(E^{\mathrm{ab}},f)$, where $E^{\mathrm{ab}}\subseteq E(D)$ and $f\colon Z\to2^{E^{\mathrm{ab}}}$, such that
\begin{enumerate}[label=$(\mathrm{A}\arabic*)$]
    \item\label{def:absstruct1} if $z \in Z \cap A^+$, then $f(z)$ contains exactly $t$ edges from $E_D(z,A^-)$;
    \item\label{def:absstruct2} if $z \in Z \cap A^-$, then $f(z)$ contains exactly $t$ edges from $E_D(A^+,z)$;
    \item\label{def:absstruct3} if $z \in Z \cap A^0$, then $f(z)$ contains exactly $t$ edges from $E_D(A^+,z)$ and exactly $t$ edges from $E_D(A^+,z)$, and
    \item\label{def:absstruct4} the collection $\{f(z)\}_{z\in Z}$ is a partition of $E^{\mathrm{ab}}$; in particular, the sets $f(z)$ are disjoint.
\end{enumerate}
Note that, for convenience, for $z \in A^+ \cup A^-$, we often think of the $t$ edges in $f(z)$ as $t$ edge-disjoint $(A^+,A^-)$-paths of length $1$. For $z \in A^0$, we arbitrarily pair up the in- and outedges in $f(z)$ to create $t$ edge-disjoint $(A^+, A^-)$-paths of length $2$ through $z$.
\end{definition}

The following lemmas show the existence of absorbing structures in $(n,p, \kappa, \lambda)$ digraphs.

\begin{lemma}\label{lem:absstruct1}
Let $D$ be an $(n,p,\kappa,\lambda)$-digraph with $100\log n<\kappa\leq np/120$.
Then, $D$ contains an $(\dot{A},12\kappa)$-absorbing structure which contains at most $150\kappa$ edges incident to each $v\in \dot{A}$.
\end{lemma}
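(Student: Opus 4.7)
The plan is to construct $(E^{\mathrm{ab}},f)$ by a randomised two-phase procedure. In Phase~1, for each $v \in A^+$ I will independently pick a uniformly random subset $S_v \subseteq N_D^+(v) \cap A^-$ of size exactly $12\kappa$; this is feasible since $e_D(v,A^-)\geq np/4 \geq 30\kappa$ by~\ref{def:digraphitem1} and the hypothesis $\kappa \leq np/120$. Given Phase~1, I set $I_u \coloneqq \{v \in A^+ : u \in S_v\}$ for each $u \in A^-$. In Phase~2, for each $u \in A^-$ I will independently pick a uniformly random subset $T_u \subseteq (N_D^-(u) \cap A^+) \setminus I_u$ of size exactly $12\kappa$. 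I then define $f(v) \coloneqq \{vu : u \in S_v\}$ for $v \in A^+$, $f(u) \coloneqq \{vu : v \in T_u\}$ for $u \in A^-$, and $E^{\mathrm{ab}} \coloneqq \bigcup_{z \in \dot A} f(z)$. Properties~\ref{def:absstruct1} and~\ref{def:absstruct2} hold by construction, and~\ref{def:absstruct3} is vacuous as $Z \cap A^0 = \emptyset$; for disjointness~\ref{def:absstruct4}, if $vu \in f(v) \cap f(u)$ then $u \in S_v$ (so $v \in I_u$) and $v \in T_u \subseteq A^+ \setminus I_u$, a contradiction.

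Writing $T'_v \coloneqq \{u \in A^- : v \in T_u\}$, the number of edges of $E^{\mathrm{ab}}$ incident to $v \in A^+$ equals $12\kappa + |T'_v|$ and the number incident to $u \in A^-$ equals $12\kappa + |I_u|$. It therefore suffices to show that, with positive probability, $|I_u|\leq 138\kappa$ and $|(N_D^-(u) \cap A^+) \setminus I_u| \geq 12\kappa$ hold for every $u \in A^-$, and $|T'_v| \leq 138\kappa$ holds for every $v \in A^+$.

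Since the $S_v$'s are independent, $|I_u|$ is a sum of independent Bernoulli indicators with
\[
\mathbb{E}[|I_u|] \;=\; \sum_{v \in N_D^-(u) \cap A^+} \frac{12\kappa}{e_D(v,A^-)} \;\leq\; \frac{48\kappa\, e_D(A^+,u)}{np} \;\leq\; \min\bigl\{\tfrac{2}{5}\, e_D(A^+,u),\; 48\kappa\bigr\},
\]
by~\ref{def:digraphitem1},~\ref{def:digraphitem2} and $np \geq 120\kappa$. Combining \cref{lem:Chernoff} with $\delta = 1/2$ (for the fractional bound) and \cref{lem:betaChernoff} (for the absolute bound), together with the hypothesis $\kappa > 100\log n$, a union bound over $u \in A^-$ gives $|I_u| \leq (3/5)\, e_D(A^+,u)$ and $|I_u| \leq 138\kappa$ for every $u$ with probability $1-o(1)$. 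Since $e_D(A^+,u) \geq 30\kappa$, the first of these implies $|(N_D^-(u) \cap A^+) \setminus I_u| \geq 12\kappa$, so Phase~2 is always feasible. Conditional on this Phase~1 event, the $T_u$'s are independent, and $|T'_v|$ is a sum of independent indicators with
\[
\mathbb{E}\bigl[\,|T'_v| \,\big|\, \text{Phase 1}\bigr] \;\leq\; \sum_{u \in N_D^+(v) \cap A^-} \frac{12\kappa}{(2/5)\, e_D(A^+,u)} \;\leq\; \frac{120\kappa\, e_D(v,A^-)}{np} \;\leq\; 120\kappa,
\]
so a second application of \cref{lem:Chernoff} plus a union bound over $v \in A^+$ yields $|T'_v| \leq 138\kappa$ for all $v \in A^+$ with probability $1-o(1)$ conditional on the favourable Phase~1 event, giving the desired structure with positive probability.

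The main obstacle will be controlling the two layers of randomness: the distribution of the $T_u$'s depends on Phase~1 through the sets $I_u$, so the Chernoff bound on $|T'_v|$ must be applied \emph{conditionally} on the favourable Phase~1 event. A secondary tightness issue is that the bound on $|I_u|$ must simultaneously serve two purposes---an absolute bound ($\leq 138\kappa$) to limit edges incident to $u$, and a fractional bound ($\leq (3/5)\, e_D(A^+,u)$) to leave at least $12\kappa$ candidate edges for $T_u$---and the numerical choices of $150\kappa$ and $\kappa \leq np/120$ in the statement are calibrated precisely so that both requirements can be met in the worst case.
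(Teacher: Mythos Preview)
Your argument is correct, and it arrives at the same conclusion by a genuinely different probabilistic construction from the paper's. The paper first samples each edge of $D[A^+,A^-]$ independently with probability $q=120\kappa/(np)$, obtaining a subdigraph in which all relevant degrees lie in $[25\kappa,150\kappa]$; it then assigns each surviving edge to $E^+$ or $E^-$ by an independent fair coin, so that $E^+$ supplies the $12\kappa$ edges for each $v\in A^+$ and $E^-$ supplies the $12\kappa$ edges for each $u\in A^-$. Disjointness of the $f$-sets is then automatic from the bipartition $E^+\cup E^-$, and both rounds involve only fully independent Bernoulli variables with no conditioning between phases.

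Your construction instead fixes exactly $12\kappa$ edges per vertex from the outset and enforces disjointness by explicit avoidance in Phase~2. This is more direct---you never over-sample and then trim---but you pay for it with a dependence between the phases: the law of $|T'_v|$ depends on the Phase~1 outcome through the sets $I_u$, so the second Chernoff application must be done conditionally, and the domination argument needed to apply \cref{lem:Chernoff} and \cref{lem:betaChernoff} when you only have an upper bound on the mean should be made explicit. The paper's two-round scheme sidesteps this entirely at the cost of a slightly less efficient use of edges. Both arguments exploit the same numerical slack ($np/4\leq e_D(v,A^-)\leq np$ and $\kappa\leq np/120$), so neither gains a quantitative advantage.
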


\begin{proof}
Consider $D[A^+,A^-]$.
We define a random subdigraph $D_q$ of $D[A^+,A^-]$ by including each of the edges of $E_D(A^+,A^-)$ with probability $q\coloneqq120\kappa/(np)$, independently of each other.
For each $v\in A^+$, let $\mathcal{B}_v$ be the event that $d^+_{D_q}(v)\notin[25\kappa,150\kappa]$.
Similarly, for each $v\in A^-$, let $\mathcal{B}_v$ be the event that $d^-_{D_q}(v)\notin[25\kappa,150\kappa]$.
By \ref{def:digraphitem1}, \ref{def:digraphitem2} and \cref{lem:Chernoff}, it follows that, for each $v\in \dot{A}$, $\mathbb{P}[\mathcal{B}_v]\leq e^{-\kappa/50}$\COMMENT{Assume $v\in A^+$ (the other case is proved analogously).
We apply \cref{lem:Chernoff} twice to obtain the two bounds on the resulting degree.
Note that $30\kappa\leq\mu\coloneqq\mathbb{E}[d^+_{D_q}(v)]\leq120\kappa$.
From the lower bound, by an application of \cref{lem:Chernoff} we have that
\[\mathbb{P}[d^+_{D_q}(v)<25\kappa]\leq\mathbb{P}[d^+_{D_q}(v)<25\mu/30]\leq e^{-\mu/72}\leq e^{-5\kappa/12}\]
(we apply \cref{lem:Chernoff} with $\delta=1/6$, and in the last bound we use the lower bound on $\mu$).
Similarly, from the upper bound we have that
\[\mathbb{P}[d^+_{D_q}(v)>150\kappa]\leq\mathbb{P}[d^+_{D_q}(v)>150\mu/120]\leq e^{-\mu/48}\leq e^{-5\kappa/8}\]
(we apply \cref{lem:Chernoff} with $\delta=1/4$).
The claim follows by adding these two terms; the bound we claim is very rough.
}.
Then, by a union bound over all $v\in \dot{A}$ and the lower bound on $\kappa$, we conclude that there exists a digraph $D'\subseteq D[A^+,A^-]$ such that, for each $v\in A^+$, $d^+_{D'}(v)\in[25\kappa,150\kappa]$, and for each $v\in A^-$, $d^-_{D'}(v)\in[25\kappa,150\kappa]$.

We are now going to randomly split the edges of $D'$ into two sets $E^+$ and $E^-$, and then prove that, with positive probability, $E^+$ contains an $(A^+,12\kappa)$-absorbing structure $\mathcal{A}^+$, and $E^-$ contains an $(A^-,12\kappa)$-absorbing structure $\mathcal{A}^-$.
It then immediately follows that $\mathcal{A}^+\cup\mathcal{A}^-$ is the desired $(\dot{A},12\kappa)$-absorbing structure.

For each $e\in E(D')$, with probability $1/2$ and independently of all other edges, we assign $e$ to $E^+$, and otherwise we assign it to $E^-$.
Let $D^+\coloneqq (\dot{A},E^+)$ and $D^-\coloneqq (\dot{A},E^-)$ (so, in particular, $D'=D^+\cup D^-$).
Now, for each $v\in A^+$, let $\mathcal{B}_v'$ be the event that $d^+_{D^+}(v)<12\kappa$, and for each $v\in A^-$, let $\mathcal{B}_v'$ be the event that $d^-_{D^-}(v)<12\kappa$.
In particular, by \cref{lem:Chernoff}, it follows that, for each $v\in \dot{A}$, $\mathbb{P}[\mathcal{B}_v']\leq e^{-\kappa/100}$\COMMENT{Assume $v\in A^+$ (the other case is analogous).
By the property we are assuming on $D'$ (i.e., all vertices have degree at least $25\kappa$), we have $\mu\coloneqq\mathbb{E}[d^+_{D^+}(v)]\geq25\kappa/2$.
Then, by \cref{lem:Chernoff},
\[\mathbb{P}[\mathcal{B}_v]=\mathbb{P}[d^+_{D^+}(v)<12\kappa]\leq\mathbb{P}[d^+_{D^+}(v)<24\mu/25]\leq e^{-\mu/2\cdot625}\leq e^{-\kappa/100}.\]
}.
By a union bound, we conclude that there exists a partition of $E(D')$ into $E^+$ and $E^-$ such that, for each $v\in A^+$, we have $d^+_{D^+}(v)\geq12\kappa$, and for each $v\in A^-$ we have $d^-_{D^-}(v)\geq12\kappa$\COMMENT{This is the event that none of the $\mathcal{B}_i$ occur, so it suffices to check that this happens with positive probability. 
By the union bound, the probability that any of the $\mathcal{B}_i$ occur is at most $ne^{-\kappa/100}<1$, so we are done.}.

In order to obtain the desired absorbing structure, for each $v\in A^+$ let $f(v)$ be an arbitrary set of $12\kappa$ of the edges of $E^+$ which contain $v$, and for each $v\in A^-$ let $f(v)$ be an arbitrary set of $12\kappa$ of the edges of $E^-$ which contain $v$.
\end{proof}

\begin{lemma}\label{lem:absstruct2}
Let $D$ be an $(n,p,\kappa,\lambda)$-digraph with $8\log(4n)<\kappa\leq np/12$\COMNEW{The lower bound is needed for the union bound, the upper bound is needed so that $q$ is well defined.}, $\lambda\leq np/3$ and $\kappa\lambda\geq 4np\log(2n)$.
Then, $D$ contains an $(A^0,3\kappa)$-absorbing structure which contains at most $5\kappa$ edges incident to each $v\in \dot{A}$.
\end{lemma}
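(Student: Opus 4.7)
The plan is to randomly build the absorbing structure by choosing, for each vertex of $A^0$, a random set of inedges from $A^+$ and a random set of outedges to $A^-$, and then to verify via a Chernoff-type union bound that no vertex of $\dot A$ ends up overloaded. Concretely, for each $v\in A^0$, I would independently select a uniformly random subset $S_v\subseteq E_D(A^+,v)$ of size exactly $3\kappa$ and a uniformly random subset $T_v\subseteq E_D(v,A^-)$ of size exactly $3\kappa$; these exist because \ref{def:digraphitem4} combined with $\kappa\leq np/12$ gives $e_D(A^+,v),e_D(v,A^-)\geq np/3\geq 4\kappa$. Setting $f(v)\coloneqq S_v\cup T_v$ and $E^{\mathrm{ab}}\coloneqq\bigcup_{v\in A^0}f(v)$ defines the pair $\mathcal{A}=(E^{\mathrm{ab}},f)$. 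By construction $f(v)$ contains exactly the right number of edges of each type, so \ref{def:absstruct3} holds, and the sets $f(v)$ for distinct $v\in A^0$ are disjoint since every edge in $f(v)$ has its unique $A^0$-endpoint at $v$, giving \ref{def:absstruct4}.

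The nontrivial step is the degree bound at $\dot A$. Fix $u\in A^+$ and write $X_u\coloneqq|\{v\in A^0:uv\in S_v\}|$. Because the subsets $\{S_v\}_{v\in A^0}$ are chosen independently, $X_u$ is a sum of independent Bernoulli indicators, each with parameter $\Pr[uv\in S_v]=3\kappa/e_D(A^+,v)\leq 9\kappa/(np)$ by \ref{def:digraphitem4}. Invoking \ref{def:digraphitem3} and $\lambda\leq np/3$ yields $\mathbb{E}[X_u]\leq (9\kappa/(np))\cdot e_D(u,A^0)\leq 9\kappa\lambda/(np)\leq 3\kappa$. The multiplicative Chernoff bound (\cref{lem:Chernoff} for $\delta\leq 1$ and \cref{lem:betaChernoff} for $\delta>1$) then gives $\Pr[X_u\geq 5\kappa]\leq e^{-c\kappa}$ for some absolute constant $c>0$; the worst case $\mathbb{E}[X_u]\approx 3\kappa$ corresponds to $\delta=2/3$ and still yields an exponent of order $\kappa$. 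The symmetric argument for $T_v$ controls, for each $u\in A^-$, the count of sampled inedges from $A^0$.

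A union bound over the at most $2n$ bad events (one for each $u\in\dot A$) shows that with positive probability every such vertex is incident to at most $5\kappa$ edges of $E^{\mathrm{ab}}$. The assumption $\kappa>8\log(4n)$ comfortably beats the $\log n$ factor from the union bound since $c\kappa$ dominates; fixing any realization satisfying all the degree conditions produces the desired absorbing structure.

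The main obstacle is arranging for the Chernoff exponent $c$ to be uniform across the full range $\mathbb{E}[X_u]\in[0,3\kappa]$; this is a routine but careful calculation, with the tightest case at $\mathbb{E}[X_u]$ close to $3\kappa$ still yielding $e^{-\Omega(\kappa)}$. The hypothesis $\kappa\lambda\geq 4np\log(2n)$ is the parameter one would naturally use in the alternative strategy of sampling each edge of $E_D(A^+,A^0)\cup E_D(A^0,A^-)$ independently with probability $q\coloneqq 12\kappa/(np)$: there $q\lambda\geq 48\log(2n)$ is precisely what lets Chernoff produce $n^{-\Omega(1)}$ concentration in the marginal regime $\mathbb{E}[X_u]\approx q\lambda\leq 4\kappa$, which would otherwise not suffice under $\kappa>8\log(4n)$ alone. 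Either formulation yields the required absorbing structure.
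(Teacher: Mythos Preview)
Your approach is correct and takes a genuinely different route from the paper. The paper samples every edge of $D[A^+,A^0]\cup D[A^0,A^-]$ independently with probability $q=12\kappa/(np)$ and then verifies two things by Chernoff: that each $v\in A^0$ receives at least $3\kappa$ edges in each direction (this is where $\kappa>8\log(4n)$ enters), and that each $v\in\dot A$ is incident to at most $5\kappa$ sampled edges (this is where $\kappa\lambda\geq 4np\log(2n)$ enters, since the exponent is $\Theta(\kappa\lambda/(np))$). Your approach instead fixes $|S_v|=|T_v|=3\kappa$ from the outset, eliminating the first concentration event entirely; as you correctly observe at the end, your primary argument then does not need the hypothesis $\kappa\lambda\geq 4np\log(2n)$ at all, which is a modest strengthening.

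One technical point deserves a cleaner justification. You claim $\Pr[X_u\geq 5\kappa]\leq e^{-c\kappa}$ uniformly over $\mathbb{E}[X_u]\in[0,3\kappa]$, appealing to \cref{lem:Chernoff} when $\delta\leq 1$ and \cref{lem:betaChernoff} when $\delta>1$. But in the intermediate range $5\kappa/e\leq\mathbb{E}[X_u]<5\kappa/2$ one has $\delta>1$ while $\beta=5\kappa/\mu\leq e$, so \cref{lem:betaChernoff} gives a trivial bound. The clean fix is the one your ``worst case'' remark is implicitly invoking: since $X_u$ is stochastically dominated by $\mathrm{Bin}(\lfloor np/3\rfloor,\,9\kappa/(np))$ (first bound each success probability by $9\kappa/(np)$, then pad the number of trials from $e_D(u,A^0)\leq\lambda\leq np/3$ up to $np/3$), a single application of \cref{lem:Chernoff} with mean $3\kappa$ and $\delta=2/3$ gives $\Pr[X_u\geq 5\kappa]\leq e^{-4\kappa/9}$ in all cases. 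With this made explicit, the union bound over $|\dot A|\leq n$ vertices succeeds because $\kappa>8\log(4n)$ comfortably exceeds $(9/4)\log n$.
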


\begin{proof}
Let $D'\coloneqq D[A^+,A^0]\cup D[A^0,A^-]$, and let $D_q$ be a random subdigraph of $D'$ obtained by adding each edge of $D'$ with probability $q\coloneqq12\kappa/(np)$ and independently of each other.
For each $v\in A^+$, let $\mathcal{B}_v$ be the event that $d^+_{D_q}(v)>5\kappa$.
Similarly, for each $v\in A^-$, let $\mathcal{B}_v$ be the event that $d^-_{D_q}(v)>5\kappa$.
Finally, for each $v\in A^0$, let $\mathcal{B}_v^+$ and $\mathcal{B}_v^-$ be the events that $d^-_{D_q}(v)<3\kappa$ and $d^+_{D_q}(v)<3\kappa$, respectively.

It follows from \ref{def:digraphitem3} and \cref{lem:Chernoff} that, for each $v\in \dot{A}$, we have $\mathbb{P}[\mathcal{B}_v]\leq e^{-{\kappa\lambda}/{(4np)}}$\COMMENT{Say $v\in A^+$ (the other case is analogous).
By \ref{def:digraphitem3}, we have $\mathbb{E}[d^+_{D_q}(v)]\leq\mu\coloneqq q\lambda\leq4\kappa$ (this follows by substituting the value of $q$ and using the bound on $\lambda$ from the statement).
Observe further that the variable $d^+_{D_q}(v)$, which is a binomial variable $\mathrm{Bin}(d^+_{D'}(v),q)$, is stochastically dominated by a variable $X\sim\mathrm{Bin}(\lambda,q)$ (this means that, for every $t$, $\mathbb{P}[d^+_{D_q}(v)\geq t]\leq\mathbb{P}[X\geq t]$).
Then, using this fact and \cref{lem:Chernoff},
\[\mathbb{P}[\mathcal{B}_v]=\mathbb{P}[d^+_{D_q}(v)>5\kappa]\leq\mathbb{P}[X>5\kappa]\leq\mathbb{P}[X>5\mu/4]\leq e^{-\mu/48}=e^{-\frac{1}{4}\frac{\kappa\lambda}{np}}.\]}.
Similarly, by \ref{def:digraphitem4} and \cref{lem:Chernoff}, for each $v\in A^0$ we have that $\mathbb{P}[\mathcal{B}_v^+],\mathbb{P}[\mathcal{B}_v^-]\leq e^{-\kappa/8}$\COMMENT{Consider $\mathcal{B}_v^+$ (the other case is analogous).
By \ref{def:digraphitem4} we have that $\mu\coloneqq\mathbb{E}[d^-_{D_q}(v)]\geq npq/3=4\kappa$.
Now, by \cref{lem:Chernoff},
\[\mathbb{P}[\mathcal{B}_v^+]=\mathbb{P}[d^-_{D_q}(v)<3\kappa]\leq\mathbb{P}[d^-_{D_q}(v)<3\mu/4]\leq e^{-\mu/32}\leq e^{-\kappa/8}.\]}.
By a union bound\COMMENT{The trivial bound is given by $2ne^{-\kappa/8}+ne^{-{\kappa\lambda}/{(4np)}}$, and this is $<1$ by the assumptions in the statement.}, we conclude that there exists $D^*\subseteq D'$ such that, for each $v\in A^+$, we have $d^+_{D_q}(v)\leq5\kappa$; for each $v\in A^-$, we have $d^-_{D_q}(v)\leq5\kappa$, and for each $v\in A^0$, we have $d^+_{D_q}(v),d^-_{D_q}(v)\geq3\kappa$.

In order to obtain the absorbing structure, for each $v\in A^0$, let $f(v)$ be the union of an arbitrary subset of $E_{D^*}(A^+,v)$ of size $3\kappa$ and an arbitrary subset of $E_{D^*}(v,A^-)$ of size $3\kappa$.
\end{proof}

\subsection{Using absorbing structures}

In this subsection, we show how to use absorbing structures to obtain perfect decompositions, and we use this to prove \cref{thm:main}.
As mentioned earlier, the idea will be to use these absorbing structures to absorb Eulerian digraphs.
The Eulerian digraphs will be decomposed into cycles, using \cref{th:compress}, and absorbed one cycle at a time.

Given an $(n,p,\kappa,\lambda)$-digraph $D$, we set $N\coloneqq c'n\log n$, where $c'$ is the constant given by \cref{th:compress}, so any Eulerian subdigraph of $D$ can be decomposed into at most $N$ cycles.
We call a cycle $C\subseteq D$ \emph{short} if $|V(C)\cap \dot{A}|\leq \kappa$, \emph{long} if $|V(C)\cap \dot{A}|\geq N/\kappa$, and \emph{medium} otherwise.
We will need a different strategy to absorb the set of cycles of each type.
We will show how to absorb long, medium and short cycles in \cref{lem:absorbc1,lem:absorbc2,lem:absorbc3}, respectively.

The following lemma shows how to absorb a single long or medium cycle, under suitable conditions, and will be used in \cref{lem:absorbc1,lem:absorbc2}.

\begin{lemma}
\label{lem:findabsedges}
Let $D$ be an $(n,p,\kappa,\lambda)$-digraph.
Let $C\subseteq D$ be a cycle with $\ell\coloneqq|V(C) \cap \dot{A}|>\kappa$ and $S\subseteq V(C)\cap \dot{A}$ with $|S|\geq\ell/\kappa+1$.
Let $\mathcal{A}=(E^{\mathrm{ab}},f)$ be an $(S,\kappa+2)$-absorbing structure such that $E(C)\cap E^{\mathrm{ab}}=\varnothing$.
Then, there exist distinct vertices $v_1,v_2\in S$ and edges $e_1\in f(v_1)$ and $e_2\in f(v_2)$ such that $E(C)\cup \{e_1,e_2\}$ can be decomposed into two $(A^+,A^-)$-paths.
\end{lemma}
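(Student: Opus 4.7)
My approach is to find distinct $v_1, v_2 \in S$ and edges $e_1 \in f(v_1)$, $e_2 \in f(v_2)$ whose ``other endpoints'' $u_1, u_2$ both lie off $V(C)$, from which the desired decomposition is immediate. Indeed, $v_1$ and $v_2$ split $E(C)$ into two arcs; attaching the pendant edges $e_1, e_2$ to the appropriate arcs produces two $(A^+, A^-)$-paths (exactly as illustrated in \cref{fig:AbsEx}). A short case analysis based on whether $v_1, v_2$ are both in $A^+$, both in $A^-$, or of mixed sign verifies that the two resulting paths have endpoints of the correct sign: the key point is that $u_1$ and $u_2$, being off $V(C)$, have degree~$1$ in $E(C) \cup \{e_1, e_2\}$ and therefore must be path endpoints.

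The existence of such $v_1, v_2, e_1, e_2$ follows from a pigeonhole argument on the cyclic structure. Order $V(C) \cap \dot{A}$ cyclically along $C$; the $|S|$ vertices of $S$ split the other vertices of $V(C) \cap \dot{A}$ into $|S|$ arcs. By the hypothesis $|S| \geq \ell/\kappa + 1$, averaging yields $\ell/|S| < \kappa$, so some arc contains at most $\kappa - 1$ interior vertices of $V(C) \cap \dot{A}$. Let $v_1, v_2 \in S$ be the two endpoints of this short arc. Since $|f(v_i)| = \kappa + 2$ and each $f(v_i)$ consists of edges to $\kappa + 2$ distinct other endpoints in $A^{\mp(v_i)}$, at most $\kappa - 1$ of these endpoints can lie on the short arc, leaving at least $3$ endpoints per $v_i$ that are either off $V(C)$ or on the long arc.

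The main obstacle is the residual case when all of these extra endpoints for $v_1$ (or $v_2$) lie on the long arc rather than off $V(C)$, so that $u_1, u_2 \in V(C)$. In that case, the 2-path decomposability of $E(C) \cup \{e_1, e_2\}$ depends delicately on the cyclic order of $\{v_1, v_2, u_1, u_2\}$ on $C$: in the same-sign case (say $v_1, v_2 \in A^+$), orderings such as $v_1, v_2, u_1, u_2$ and $v_1, u_1, v_2, u_2$ admit a 2-path decomposition, but orderings like $v_1, v_2, u_2, u_1$ force at least three pieces (two paths plus a residual cycle). I would complete the proof by a case analysis using the $\geq 3$ choices of $u_i$ available per vertex: with this flexibility, one can always select $e_1, e_2$ so that the resulting cyclic arrangement is compatible with a 2-path decomposition, which I would then exhibit explicitly in each case.
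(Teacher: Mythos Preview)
Your approach diverges from the paper's and, as written, has a genuine gap in the final case analysis. The paper does \emph{not} fix $v_1,v_2$ as the two $S$-vertices bounding a short arc. Instead, it first disposes of the easy case where two vertices of $S$ each have an edge of $f(\cdot)$ leaving $V(C)$. In the remaining case, at least $|S|-1\ge \ell/\kappa>1$ vertices $v\in S$ have \emph{all} of $f(v)$ landing on $V(C)$; for each such $v$ it considers the shortest sub-arc $P_v$ of $C$ (avoiding $v$) that covers the $\kappa+2$ landing points, so $|V(P_v)\cap\dot A|\ge\kappa+2$. A counting argument ($\sum_v |V(P_v)\cap\dot A|>\ell+|S'|$) then forces two of these arcs $P_{v_1},P_{v_2}$ to overlap on two consecutive $\dot A$-vertices, which yields a \emph{crossing} pair of chords $e_1\in f(v_1)$, $e_2\in f(v_2)$, and crossing chords always give a two-path decomposition.

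The problem with your plan is that once you commit to the two endpoints $v_1,v_2$ of a single short arc, having ``$\ge 3$ choices of $u_i$ on the long arc'' does \emph{not} guarantee you can realise a good cyclic order. Take $v_1,v_2\in A^+$ with the short arc directed $v_1\to v_2$, and suppose all relevant endpoints lie on the long arc. Then the only achievable cyclic orders are $v_1,v_2,u_1,u_2$ and $v_1,v_2,u_2,u_1$; the first (a crossing) decomposes, but the second does not (any attempt leaves a residual cycle of the form $u_i C v_i e_i u_i$). Nothing prevents every endpoint of $f(v_1)$ on the long arc from sitting closer to $v_1$ (at the far end of the long arc from $v_2$) while every endpoint of $f(v_2)$ sits closer to $v_2$; in that situation \emph{every} admissible pair $(u_1,u_2)$ is non-crossing, and your case analysis cannot succeed for this fixed $v_1,v_2$. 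The paper's $P_v$-overlap argument is exactly what is needed to locate a pair $v_1,v_2$ for which a crossing is guaranteed.
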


\begin{proof}
Assume first that there are two distinct vertices $v_1,v_2\in S$ such that, for each $i\in[2]$, there is an edge $e_i\in f(v_i)$ whose other vertex is not contained in $V(C)$.
Observe that the definition of $\mathcal{A}$ ensures that $e_1\cup e_2$ is not a path of length $2$\COMMENT{All edges of $\mathcal{P}$ are oriented either towards $A^-$ or from $A^+$. By having the extra vertex outside $V(C)$, if $e_1$ and $e_2$ share a vertex, then they are both oriented towards this vertex or away from this vertex. So they do not form a path of length $2$.}.
Now, for each $i\in[2]$, if $e_i=v_ix_i$, let $P_i^+\coloneqq v_ix_i$ and $P_i^-\coloneqq v_i$, and if $e_i=x_iv_i$, let $P_i^+\coloneqq v_i$ and $P_i^-\coloneqq x_iv_i$.
Let $P$ be the $(v_1,v_2)$-subpath of $C$, and let $P'$ be the $(v_2,v_1)$-subpath of $C$.
The paths described in the statement are now given by $P_1\coloneqq P_1^-PP_2^+$ and $P_2\coloneqq P_2^-P'P_1^+$.
Since $e_1\cup e_2$ is not a path of length $2$, these two structures must indeed be paths and in all cases they are $(A^+,A^-)$-paths since the paths have the same start- and endpoints as $e_1$ and $e_2$.
See \cref{fig:absoutsideedges} for a visual representation of two of the four possible outcomes.
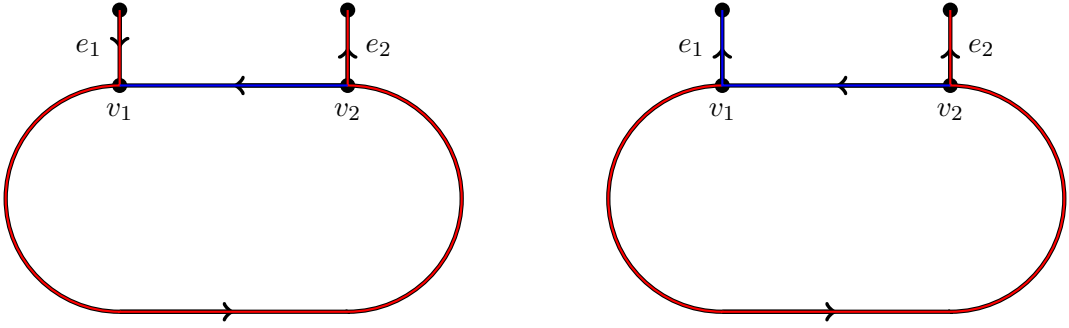
\begin{figure}
    \centering
    \begin{subfigure}[b]{0.49\textwidth}
        \centering
\begin{tikzpicture}
\draw[ultra thick, black] (0,0) -- (3,0) arc(90:-90:1.5) -- (3,-3) -- (0,-3) arc(270:90:1.5) -- cycle;
\draw[ultra thick, black,->] (3,0) -- (1.5,0);
\draw[ultra thick, black,->] (0,-3) -- (1.5,-3);
\draw (0,0) node[circle,fill,inner sep=2pt]{};
\draw (3,0) node[circle,fill,inner sep=2pt]{};
\node[anchor = north] at (0,-0.1){$v_1$};
\node[anchor = north] at (3,-0.1){$v_2$};
\draw[ultra thick, black] (3,0) -- (3,1);
\draw[ultra thick, black] (0,0) -- (0,1);
\draw[ultra thick, black,->] (3,0) -- (3,0.5);
\draw[ultra thick, black,->] (0,1) -- (0,0.5);
\draw (0,1) node[circle,fill,inner sep=2pt]{};
\draw (3,1) node[circle,fill,inner sep=2pt]{};
\node[anchor = east] at (-0.1,0.5){$e_1$};
\node[anchor = west] at (3.1,0.5){$e_2$};

\draw[thick, red] (0,1) -- (0,0) arc(90:270:1.5) -- (0,-3) -- (3,-3) arc(-90:90:1.5) -- (3,0) -- (3,1);
\draw[thick, blue] (3,0) -- (0,0);
\end{tikzpicture}
    \end{subfigure}
    \begin{subfigure}[b]{0.49\textwidth}
        \centering
\begin{tikzpicture}
\draw[ultra thick, black] (0,0) -- (3,0) arc(90:-90:1.5) -- (3,-3) -- (0,-3) arc(270:90:1.5) -- cycle;
\draw[ultra thick, black,->] (3,0) -- (1.5,0);
\draw[ultra thick, black,->] (0,-3) -- (1.5,-3);
\draw (0,0) node[circle,fill,inner sep=2pt]{};
\draw (3,0) node[circle,fill,inner sep=2pt]{};
\node[anchor = north] at (0,-0.1){$v_1$};
\node[anchor = north] at (3,-0.1){$v_2$};
\draw[ultra thick, black] (3,0) -- (3,1);
\draw[ultra thick, black] (0,0) -- (0,1);
\draw[ultra thick, black,->] (3,0) -- (3,0.5);
\draw[ultra thick, black,->] (0,0) -- (0,0.5);
\draw (0,1) node[circle,fill,inner sep=2pt]{};
\draw (3,1) node[circle,fill,inner sep=2pt]{};
\node[anchor = east] at (-0.1,0.5){$e_1$};
\node[anchor = west] at (3.1,0.5){$e_2$};

\draw[thick, red] (0,0) arc(90:270:1.5) -- (0,-3) -- (3,-3) arc(-90:90:1.5) -- (3,0) -- (3,1);
\draw[thick, blue] (3,0) -- (0,0) -- (0,1);
\end{tikzpicture}
    \end{subfigure}
    \caption{\footnotesize{A representation of the path decomposition of a cycle and two edges as proposed in \cref{lem:findabsedges}, in the case where we can find said edges with their endpoints outside $V(C)$.}}
    \label{fig:absoutsideedges}
\end{figure}

Therefore, we may assume that there are at least ${\ell}/{\kappa}>1$ vertices $v\in S$ such that all $e\in f(v)$ have both endpoints in $V(C)$.
Let us denote the set of these vertices by $S'$.
For each $v\in S'$, let $P_v$ be the shortest subpath of $C$ which does not contain $v$ and contains all other endpoints of the edges $e\in f(v)$ (recall that all said endpoints lie in $\dot{A}$).
In particular, $|V(P_v)\cap\dot{A}|\geq \kappa+2$.
Now label the vertices of $V(C)\cap\dot{A}$ as $y_1,\ldots,y_\ell$ in such a way that, when traversing $C$, they are visited in this (cyclic) order.
A simple counting argument shows the following.

\begin{claim}\label{claim:abs2edges}
There exist two distinct vertices $v_1,v_2\in S'$ such that $P_{v_1}$ and $P_{v_2}$ share at least two consecutive vertices of $V(C)\cap\dot{A}$.
\end{claim}

\begin{claimproof}
Assume the statement does not hold.
Then, any two paths from $\{P_v:v\in S'\}$ can intersect only at their endpoints, and any vertex of $V(C)\cap\dot{A}$ can be an endpoint of at most two paths.
This means that 
\[\sum_{v\in S'}|V(P_v)\cap\dot{A}|\leq\ell+|S'|.\]
However, using the bounds we have obtained so far, we can confirm that\COMMENT{We have
\[\sum_{v\in S'}|V(P_v)\cap\dot{A}|\geq|S'|(\kappa+2)=|S'|\kappa+2|S'|\geq(\ell/\kappa)\kappa+2|S'|=\ell+2|S'|>\ell+|S'|\]
(because $|S'|\geq\ell/\kappa>0$).}
\[\sum_{v\in S'}|V(P_v)\cap\dot{A}|\geq|S'|(\kappa+2)\geq\ell+2|S'|>\ell+|S'|.\qedhere\]
\end{claimproof}

By \cref{claim:abs2edges}, we can choose two edges $e_1\in f(v_1)$ and $e_2\in f(v_2)$ which form a ``crossing configuration'', that is, such that the vertices of $e_1$ and $e_2$ alternate when traversing $C$ (e.g., $wy$ and $zx$ are crossing edges in \cref{fig:abscrossingedges}).
In order to complete the proof, label the vertices of $e_1$ and $e_2$ as $w,x,y,z$ in such a way that, when traversing the cycle, they appear in this (cyclic) order and such that the edges are oriented towards $x$ and towards $y$, respectively (note that in any crossing configuration there exist two consecutive vertices into which the edges are directed).
The two paths of the statement are now given by $P_1\coloneqq wyCzx$ and $P_2\coloneqq zCy$, and these are $(A^+,A^-)$-paths since they have the same start- and endpoints as $e_1$ and $e_2$. See \cref{fig:abscrossingedges} for a visual representation.
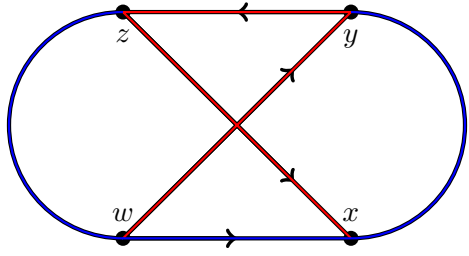
\begin{figure}
    \centering
\begin{tikzpicture}
\draw[ultra thick, black] (0,0) -- (3,0) arc(90:-90:1.5) -- (3,-3) -- (0,-3) arc(270:90:1.5) -- cycle;
\draw[ultra thick, black,->] (3,0) -- (1.5,0);
\draw[ultra thick, black,->] (0,-3) -- (1.5,-3);
\draw (0,0) node[circle,fill,inner sep=2pt]{};
\draw (3,0) node[circle,fill,inner sep=2pt]{};
\node[anchor = north] at (0,-0.1){$z$};
\node[anchor = north] at (3,-0.1){$y$};
\node[anchor = south] at (0,-2.9){$w$};
\node[anchor = south] at (3,-2.9){$x$};
\draw[ultra thick, black] (3,0) -- (0,-3);
\draw[ultra thick, black] (0,0) -- (3,-3);
\draw[ultra thick, black,->] (0,-3) -- (2.25,-0.75);
\draw[ultra thick, black,->] (0,0) -- (2.25,-2.25);
\draw (0,-3) node[circle,fill,inner sep=2pt]{};
\draw (3,-3) node[circle,fill,inner sep=2pt]{};

\draw[thick, red] (3,-3) -- (0,0) -- (3,0) -- (0,-3);
\draw[thick, blue] (0,0) arc(90:270:1.5) -- (0,-3) -- (3,-3) arc(-90:90:1.5) -- (3,0);
\end{tikzpicture}
    \caption{\footnotesize{A representation of the path decomposition of a cycle and two edges as proposed in \cref{lem:findabsedges}, in the case where we can find a ``crossing configuration''.}}
    \label{fig:abscrossingedges}
\end{figure}
\end{proof}

We now prove \cref{lem:absorbc1}, which shows how an absorbing structure can be used to absorb a collection of long cycles.
\COMNEW{In general, we want to make the constant of the absorbing structures as small as possible.}

\begin{lemma}
\label{lem:absorbc1}
Let $D$ be an $(n,p,\kappa,\lambda)$-digraph with $10\leq\kappa<N^{1/2}$\COMMENT{The upper bound is needed so that we may apply \cref{lem:findabsedges}. The lower bound ensures that some calculations go through. Note however that, for sufficiently large $n$, we may assume $\kappa$ is at least logarithmic, as otherwise $\mathcal{C}_1$ is empty.}.
Let $\mathcal{C}_1$ be a collection of edge-disjoint cycles in $D$ with $|\mathcal{C}_1|\leq 2N$ and such that, for each $C\in\mathcal{C}_1$, we have $|V(C)\cap\dot{A}|\geq N/\kappa$.
Let $\mathcal{A}=(E^{\mathrm{ab}},f)$ be an $(\dot{A},7\kappa -1)$-absorbing structure\COMNEW{If we want, the 7 can be improved to something slightly larger than 6.} with $E(\mathcal{C}_1)\cap E^{\mathrm{ab}}=\varnothing$.
Then, the digraph with edge set $E(\mathcal{C}_1)\cup E^{\mathrm{ab}}$ has a perfect decomposition in which each path is an $(A^+, A^-)$-path.
\end{lemma}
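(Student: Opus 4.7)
The plan is to process the cycles of $\mathcal{C}_1$ one by one, applying \cref{lem:findabsedges} to absorb each cycle into two $(A^+,A^-)$-paths by using up two edges of the absorbing structure. The remaining (unused) edges of $E^{\mathrm{ab}}$ will constitute length-$1$ $(A^+,A^-)$-paths. Once every cycle has been absorbed, the entire edge set $E(\mathcal{C}_1)\cup E^{\mathrm{ab}}$ will have been partitioned into $(A^+,A^-)$-paths, so \cref{pr:PartDecomp2} yields the desired perfect decomposition.

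To make this work, I would process the cycles $C_1,C_2,\ldots$ of $\mathcal{C}_1$ in arbitrary order. At stage $i$, let $\mathcal{A}^{(i)}=(E^{\mathrm{ab},(i)},f^{(i)})$ denote the current absorbing structure (with $\mathcal{A}^{(0)}=\mathcal{A}$); this is still edge-disjoint from $E(\mathcal{C}_1)$ since we only ever delete edges from $E^{\mathrm{ab}}$. Call a vertex $v\in\dot{A}$ \emph{active} at stage $i$ if $|f^{(i)}(v)|\geq\kappa+2$, and let $\ell_i\coloneqq|V(C_{i+1})\cap\dot{A}|\geq N/\kappa$. I would set $S_i\coloneqq V(C_{i+1})\cap\dot{A}\cap\{\text{active vertices}\}$ and then restrict $f^{(i)}$ to size exactly $\kappa+2$ on each vertex of $S_i$, obtaining an $(S_i,\kappa+2)$-absorbing structure. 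Assuming $|S_i|\geq\ell_i/\kappa+1$ (justified below), \cref{lem:findabsedges} produces two edges $e_1,e_2\in E^{\mathrm{ab},(i)}$ such that $E(C_{i+1})\cup\{e_1,e_2\}$ decomposes into two $(A^+,A^-)$-paths. We add these paths to our decomposition and form $\mathcal{A}^{(i+1)}$ by removing $e_1,e_2$ from $E^{\mathrm{ab},(i)}$ and from the corresponding $f^{(i)}(v_j)$.

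The main thing to verify is the bound $|S_i|\geq\ell_i/\kappa+1$ at every stage. Each absorption consumes exactly one edge from $f$ at two distinct vertices of $\dot{A}$, so across all at most $2N$ absorptions the total number of edge deletions from $E^{\mathrm{ab}}$ is at most $4N$. A vertex $v$ starts with $|f(v)|=7\kappa-1$ and becomes inactive only after it has been used at least $6\kappa-2$ times; therefore the number of inactive vertices ever is at most $4N/(6\kappa-2)<4N/(5\kappa)$, using $\kappa\geq 10$. Consequently, at any stage,
\[
|S_i|\;\geq\;\ell_i-\frac{4N}{5\kappa}\;\geq\;\ell_i\Bigl(1-\tfrac{1}{\kappa}\Bigr)-\Bigl(\tfrac{4N}{5\kappa}-\tfrac{\ell_i}{\kappa}\Bigr).
\]
Using $\ell_i\geq N/\kappa$ and $\kappa\geq 10$, one checks that $\ell_i(1-1/\kappa)\geq 9N/(10\kappa)$, while $4N/(5\kappa)+1\leq 8N/(10\kappa)+1$, so $|S_i|\geq \ell_i/\kappa + 1$ holds provided $N\geq 10\kappa$. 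This last inequality follows from the hypothesis $\kappa<N^{1/2}$ (for $N$ large, say $N\geq 100$, which is guaranteed by the implicit assumption that $n$ is sufficiently large). Note also that the hypothesis $\ell_i>\kappa$ required by \cref{lem:findabsedges} follows from $\ell_i\geq N/\kappa>\kappa$, again using $\kappa<N^{1/2}$.

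The main obstacle is precisely this bookkeeping: we must guarantee that the absorbing budget of $7\kappa-1$ edges per vertex, seemingly close to the critical threshold $\kappa+2$, is not exhausted at too many vertices of some long cycle encountered later in the process. The analysis above shows this works because long cycles contain $\ell_i\geq N/\kappa$ vertices of $\dot{A}$, which dominates the $O(N/\kappa)$ inactive vertices with a factor of roughly $(1-1/\kappa)$ to spare. Once the cycle loop terminates, the leftover edges in $E^{\mathrm{ab}}$ are, by construction, length-$1$ $(A^+,A^-)$-paths, so the full collection of paths we have produced decomposes $E(\mathcal{C}_1)\cup E^{\mathrm{ab}}$ into $(A^+,A^-)$-paths; \cref{pr:PartDecomp2} completes the proof.
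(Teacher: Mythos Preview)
Your proposal is correct and follows essentially the same approach as the paper: process the cycles greedily, bound the number of inactive vertices by $4N/(5\kappa)$ via the total budget of at most $4N$ absorbed edges, verify $|S_i|\geq\ell_i/\kappa+1$ so that \cref{lem:findabsedges} applies, and finish with \cref{pr:PartDecomp2}. The only cosmetic difference is your arithmetic for the inequality $|S_i|\geq\ell_i/\kappa+1$; the paper simply notes $\ell_i-4N/(5\kappa)\geq\ell_i/5\geq\ell_i/\kappa+1$ (using $\ell_i\geq N/\kappa$ and $\kappa\geq10$), which is a bit cleaner than your rearrangement but equivalent.
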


\begin{proof}
For each $C\in\mathcal{C}_1$, we are going to use \cref{lem:findabsedges} to find two edges $e_1,e_2 \in E^{\mathrm{ab}}$ such that $E(C) \cup \{e_1, e_2\}$ can be decomposed into two $(A^+,A^-)$-paths. 
We proceed iteratively as follows.

Assume that, for some of the cycles in $\mathcal{C}_1$, we have already found two edges as described above, and we now wish to do this for the next cycle $C \in \mathcal{C}_1$.
Let $\ell\coloneqq|V(C)\cap\dot{A}| \geq N/\kappa$.
We say that an edge $e\in E^{\mathrm{ab}}$ is \emph{available} if it has not been used to absorb any of the earlier cycles.
We say that a vertex $v\in V(C)\cap \dot{A}$ is \emph{available} if at least $\kappa+2$ edges of $f(v)$ are available, and we say that it is \emph{unavailable} otherwise.
Let $S_C\subseteq V(C)\cap \dot{A}$ be the set of available vertices.
Then, we can define an $(S_C,\kappa+2)$-absorbing structure $\mathcal{A}_C$ using edges from $E^{\mathrm{ab}}$ by selecting, for each $v\in S_C$, any set of $\kappa+2$ available edges from $f(v)$.
 
Note that the total number of edges assigned to cycles so far is at most $2|\mathcal{C}_1| \leq 4N$.
On the other hand, for each $v\in V(C)\cap \dot{A}$ which is unavailable, at least $5\kappa$\COMMENT{There are at most $\kappa+1$ available edges, so at least $7\kappa-1- \kappa-1=6\kappa-2\geq5\kappa$ have been assigned.} edges of $f(v)$ have already been assigned to cycles.
Therefore, the total number of unavailable vertices is at most $4N/(5\kappa)$, so $|S_C|\geq  \ell - 4N/(5\kappa) \geq \ell/5\geq\ell/\kappa+1$\COMMENT{By the upper bound on $\kappa$, we have that $\ell\geq N/\kappa>\kappa$, so in particular $\ell>10$.
The first inequality holds since $4N/(5\kappa)\leq4\ell/5$.
Now it suffices to check that $\ell/5\geq\ell/10+1$, which holds.}.
Therefore (noting that $\ell \geq N/\kappa > \kappa$), we can apply \cref{lem:findabsedges} (with $S_C$ and $\mathcal{A}_C$ playing the roles of $S$ and $\mathcal{A}$, respectively) to obtain two (available) edges $e_1,e_2 \in E^{\mathrm{ab}}$  such that $E(C)\cup\{e_1,e_2\}$ can be decomposed into two $(A^+,A^-)$-paths.
 
After each cycle has been handled in this way and, together with two edges, decomposed into two $(A^+,A^-)$-paths, we are left with some edges in $E^{\mathrm{ab}}$, which we treat as $(A^+,A^-)$-paths.
We therefore have a decomposition of $E(\mathcal{C}_1)\cup E^{\mathrm{ab}}$ into $(A^+,A^-)$-paths, which is a perfect decomposition by \cref{pr:PartDecomp2}.
\end{proof}

We will use flow problems in order to prove \cref{lem:absorbc2,lem:absorbc3}.
All our flow problems will follow a similar structure, so we introduce the following definition in addition to the common definitions given in \cref{subsect:flows}.

\begin{definition}\label{def:flow}
Let $D$ be a multidigraph and $\mathcal{C}$ be a set of edge-disjoint cycles of $D$.
Set $B\coloneqq V(\mathcal{C})$.
We define a flow network $(F,w,s,t)$ as follows.
We define a digraph $F=F(\mathcal{C})$ on vertex set $\{s\}\cupdot\mathcal{C}\cupdot B\cupdot\{t\}$, where $s$ and $t$ are the source and sink of the flow problem, respectively.
We set $E_1\coloneqq\{sC:C\in\mathcal{C}\}$, $E_2\coloneqq\{Cb:C\in\mathcal{C},b\in V(C)\}$, $E_3\coloneqq\{bt:b\in B\}$ and $E(F)\coloneqq E_1\cup E_2\cup E_3$.
Given any two functions $g\colon\mathcal{C}\to\mathbb{R}$ and $h\colon B\to\mathbb{R}$, we will write $\mathit{FP}(\mathcal{C};g,h)$ to denote the maximum flow problem on the digraph $F=F(\mathcal{C})$ defined above where each edge $sC\in E_1$ has capacity $w(sC)=g(C)$, each edge $Cb\in E_2$ has capacity $w(Cb)=1$, and each edge $bt\in E_3$ has capacity $w(bt)=h(b)$.
If $g$ or $h$ are constant functions, we will simply replace them by the corresponding constant in the notation.
\end{definition}

The following lemma shows how an absorbing structure can be used to absorb a collection of medium cycles.
\begin{lemma}
\label{lem:absorbc2}
Let $D$ be an $(n,p,\kappa,\lambda)$-digraph $D$ with $\kappa\geq\max\{12,(12\lambda)^{1/2},(72N^2)^{1/5}\}$, or an $(n,p,\kappa,\lambda)$-pseudorandom digraph $D$ with $\kappa\geq\max\{12,(12\lambda)^{1/2},(7200N^2p)^{1/5}, \sqrt{12/(25p)}\log n\}$\COMNEW{In order for \eqref{eq:absorbC2} to make sense, we also need to have $\kappa<N^{1/2}$.
But I think if we do not add this, then we can simply note that, if $\kappa\geq N^{1/2}$, then the statement is voidly true, as $\mathcal{C}_2$ is empty.}.
Let $\mathcal{C}_2$ be a collection of at most $2N$ edge-disjoint cycles in $D$ such that, for each $C\in\mathcal{C}_2$, we have
\begin{equation}\label{eq:absorbC2}
    \kappa<|V(C)\cap\dot{A}|<N/\kappa.
\end{equation}
Let $\mathcal{A}=(E^{\mathrm{ab}},f)$ be an $(\dot{A},2\kappa+1)$-absorbing structure with $E(\mathcal{C}_2)\cap E^{\mathrm{ab}}=\varnothing$.
Then, the digraph with edge set $E(\mathcal{C}_2)\cup E^{\mathrm{ab}}$ has a perfect decomposition in which each path is an $(A^+,A^-)$-path.
\end{lemma}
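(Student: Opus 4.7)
My plan is to set up a flow problem in the spirit of \cref{def:flow}: take the vertex layer $B$ of the flow network to be $V(\mathcal{C}_2)\cap\dot{A}$, set $g(C)\coloneqq\lceil\ell(C)/\kappa\rceil+1$, and consider $\mathit{FP}(\mathcal{C}_2;g,\kappa)$. Assuming the maximum flow equals $\sum_{C}g(C)$, integrality of maximum flows produces, for each $C\in\mathcal{C}_2$, a set $S_C\subseteq V(C)\cap\dot{A}$ of ``absorbing'' vertices with $|S_C|=g(C)$ such that each vertex of $\dot{A}$ lies in at most $\kappa$ of the sets $S_C$.

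Given such an assignment, I process the cycles of $\mathcal{C}_2$ sequentially. When cycle $C$ is reached, each $v\in S_C$ has previously been a ``chosen'' vertex (in the sense of \cref{lem:findabsedges}) in at most $\kappa-1$ earlier iterations, so at least $(2\kappa+1)-(\kappa-1)=\kappa+2$ edges of $f(v)$ remain available; selecting any $\kappa+2$ such edges per vertex of $S_C$ yields an $(S_C,\kappa+2)$-absorbing structure. Since $|S_C|\geq\ell(C)/\kappa+1$, \cref{lem:findabsedges} then gives a decomposition of $E(C)\cup\{e_1,e_2\}$ into two $(A^+,A^-)$-paths for some $e_i\in f(v_i)$, $v_i\in S_C$. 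After every cycle has been handled, each remaining edge of $E^{\mathrm{ab}}$ is a length-one $(A^+,A^-)$-path, so $E(\mathcal{C}_2)\cup E^{\mathrm{ab}}$ is partitioned into edge-disjoint $(A^+,A^-)$-paths, and \cref{pr:PartDecomp2} delivers the perfect decomposition.

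The main obstacle is to prove, via \cref{thm:maxflow}, that the maximum flow does attain $\sum_{C}g(C)$. Fix a hypothetical minimum cut $(U,W)$ of smaller capacity, and write $\mathcal{C}_U\coloneqq U\cap\mathcal{C}_2$, $B_U\coloneqq U\cap B$, $m\coloneqq\sum_{C\in\mathcal{C}_U}\ell(C)$ and $a\coloneqq\sum_{C\in\mathcal{C}_U}|V(C)\cap\dot{A}\cap B_U|$. A standard local-swap argument (move out of $U$ any cycle that does not strictly decrease the cut) lets me assume every $C\in\mathcal{C}_U$ is ``bad'', i.e.\ $|V(C)\cap\dot{A}\cap B_U|\geq\ell(C)-g(C)\geq\ell(C)(1-1/\kappa)-2$, which rearranges the cut inequality to
\[
|B_U|\kappa+m\bigl(1-1/\kappa\bigr)<a+2|\mathcal{C}_U|.
\]

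The contradiction will be reached by a case analysis on $|\mathcal{C}_U|$, exploiting the lower bound $m\geq\kappa|\mathcal{C}_U|$ (every medium cycle has $\ell(C)>\kappa$) together with several upper bounds on $a$: the edge-disjointness bound $a\leq|\mathcal{C}_U||B_U|$, the medium-length bound $a\leq|\mathcal{C}_U|\cdot N/\kappa$, and the stronger inequality $a\leq\sum_{b\in B_U}d_D^+(b)=e_D(B_U,V)$. For $|\mathcal{C}_U|\leq\kappa/2$ the first bound combined with $m\geq\kappa|\mathcal{C}_U|$ quickly forces $\kappa<4$ and contradicts $\kappa\geq 12$; for $|\mathcal{C}_U|>\kappa/2$ the hypothesis $\kappa^5\geq 72 N^2$ is needed to close the argument in the non-pseudorandom case. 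In the pseudorandom case, I will further bound $e_D(B_U,V)\leq e_D(B_U)+e_D(B_U,\dot{A}\setminus B_U)+e_D(B_U,A^0)$ using properties~\ref{def:digraphitem1}--\ref{def:digraphitem3} together with~\ref{def:digraphitem5}; the extra hypotheses $\kappa^5\geq 7200 N^2 p$ and $\kappa\geq\sqrt{12/(25p)}\log n$ are calibrated so that either $|B_U|\geq\log n/(50p)$ (making~\ref{def:digraphitem5} applicable) or $|B_U|$ is so small that the contradiction can be extracted directly from the remaining bounds.
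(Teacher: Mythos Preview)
Your overall architecture is right and matches the paper: set up the flow problem on $V(\mathcal{C}_2)\cap\dot A$ with demands $g(C)=\lceil\ell(C)/\kappa\rceil+1$ and vertex capacities $\kappa$, show the max flow saturates all source edges, then feed each cycle its assigned $g(C)$ vertices into \cref{lem:findabsedges} using the $(2\kappa+1)$ available edges per vertex. That part is fine.

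The gap is in your min-cut analysis for the ``large $|\mathcal{C}_U|$'' case. The inequality $a\le\sum_{b\in B_U}d_D^+(b)=e_D(B_U,V)$ is valid but far too weak: when you split $e_D(B_U,V)=e_D(B_U)+e_D(B_U,\dot A\setminus B_U)+e_D(B_U,A^0)$, none of \ref{def:digraphitem1}--\ref{def:digraphitem3} bound the middle term $e_D(B_U,\dot A\setminus B_U)$ by anything better than roughly $|B_U|\cdot np$, and this swamps the estimate (you would need $\kappa^2\gtrsim np$, which is not among the hypotheses). The same problem occurs in the non-pseudorandom case, where you never invoke the hypothesis $\kappa\ge(12\lambda)^{1/2}$, yet it is essential.

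What you are missing is that you should bound not $a$ itself but the number $a_1$ of cycle-edges that go \emph{from $B_U$ back into $B_U$}. From the cut inequality you already have $m-a<3m/\kappa$ and $|B_U|\kappa<3m/\kappa$; the first tells you that at most $m-a<3m/\kappa$ cycle-edges leave $B_U$ for $\dot A\setminus B_U$, and \ref{def:digraphitem3} tells you at most $\lambda|B_U|$ leave for $A^0$. Hence $a_1\ge a-(m-a)-\lambda|B_U|>m(1-6/\kappa)-\lambda|B_U|$. Now the dichotomy is between $\lambda|B_U|$ dominating (which contradicts $\kappa^2\ge 12\lambda$ via $|B_U|<3m/\kappa^2$) and $e_D(B_U)\ge a_1\gtrsim m/2$ (which, combined with $|B_U|<3m/\kappa^2$ and either the trivial bound $e_D(B_U)\le|B_U|^2$ or \ref{def:digraphitem5}, yields the contradictions with $\kappa^5\ge 72N^2$, respectively $\kappa^5\ge 7200N^2p$ and $\kappa\ge\sqrt{12/(25p)}\log n$). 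The paper packages exactly this counting by first passing to auxiliary cycles on $\dot A$ (contracting the $A^0$-excursions), but your direct approach works once you track $a_1$ rather than $a$. Your local-swap step is harmless but unnecessary: the aggregate inequality $(m-a)+|B_U|\kappa<3m/\kappa$ already gives everything you need.
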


\begin{proof}
Given any digraph $H$ with $V(H)\subseteq V(D)$, let $g(H)\coloneqq\lceil{|V(H)\cap \dot{A}|}/\kappa\rceil+1$. 
We use a flow problem to assign, to each $C\in\mathcal{C}_2$, a set of $g(C)$
vertices of $V(C)\cap \dot{A}$ in such a way that no vertex is assigned to more than $\kappa$ cycles.
We will then use \cref{lem:findabsedges} to find two edges in $E^{\mathrm{ab}}$ with which to absorb $C$.
To this end, we construct a multiset of auxiliary cycles $\mathcal{C}_2'$ as follows.
We obtain $\mathcal{C}_2'$ from $\mathcal{C}_2$ by replacing each cycle $C\in\mathcal{C}_2$ by the auxiliary cycle $i(C)$ with vertices $V(C) \setminus A^0$ and whose cyclic vertex order is inherited from $C$.
Note that the cycles in $\mathcal{C}_2'$ are not necessarily cycles of $D$ and, indeed, the set $E(\mathcal{C}_2')$ (which forms a multidigraph) includes all the edges of $E(\mathcal{C}_2)$ inside $\dot{A}$ as well as an extra edge every time a cycle in $\mathcal{C}_2$ leaves and reenters $\dot{A}$.
We note for later that, since $e_D(v, A^0) \leq \lambda$ by \ref{def:digraphitem3}, the number of these extra edges contained in any $T \subseteq \dot{A}$ is at most $\lambda |T|$.
Consider $\mathit{FP}(\mathcal{C}'_2;g,\kappa)$.

\begin{claim}
\label{cl:flowc2}
$\mathit{FP}(\mathcal{C}'_2;g,\kappa)$ has a flow $\phi$ with $\mathit{val}(\phi)=\sum_{C\in \mathcal{C}'_2}g(C)$.
\end{claim}

\begin{claimproof}
Throughout this proof we use the notation set up in \cref{def:flow} and \cref{subsect:flows}.
As $M_0\coloneqq\{sC: C\in \mathcal{C}'_2\}$ is the cut-set of a cut of $F=F(\mathcal{C}_2')$ of capacity $\sum_{C\in \mathcal{C}'_2}g(C)$, by \cref{thm:maxflow} it remains to show that this is a minimum cut.
We assume the existence of a cut-set $M$ of $F$ with smaller capacity and will show that this contradicts our assumption on the value of $\kappa$.
Let $T\subseteq V(\mathcal{C}_2') \subseteq \dot{A}$ be the set of vertices that are separated from $t$ by $M$ and $T'\coloneqq V(\mathcal{C}_2') \setminus T$.
Let $S\subseteq \mathcal{C}'_2$ be the set of cycles which are not separated from $s$ by $M$, and $S'\coloneqq\mathcal{C}_2'\setminus S$.
These sets are illustrated in \cref{fig:lemC2}.
\begin{figure}
	\centering
	\includegraphics[width=0.45\textwidth]{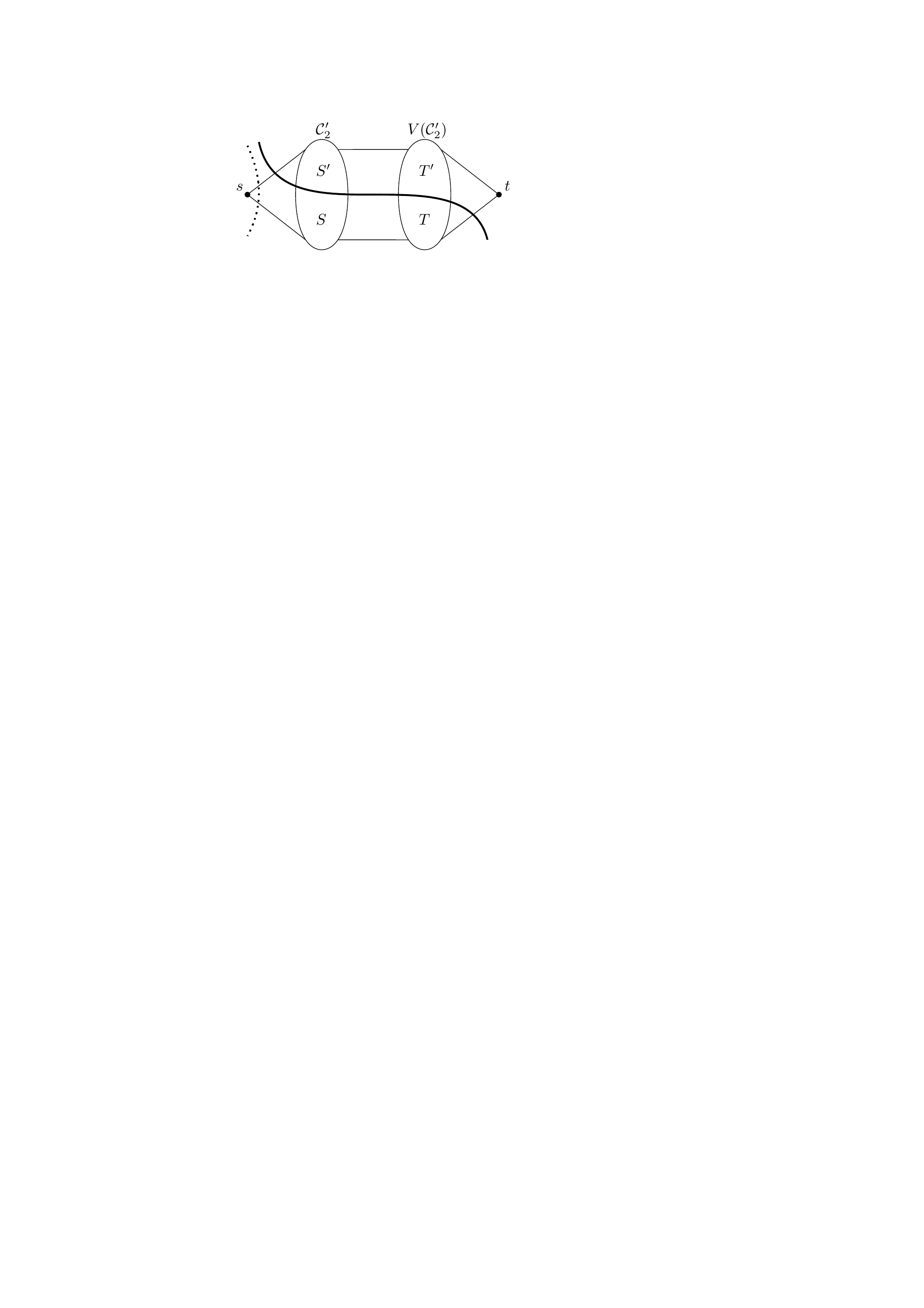}
	\caption{The graph $F(\mathcal{C}'_2)$. The thick dotted line illustrates the cut-set $M_0$. The regular thick line illustrates the cut-set $M$.}
	\label{fig:lemC2}
\end{figure}
Let $D_S$ be the multidigraph that is the union of the cycles in $S$.
We have that
\[w(M)=\sum_{C\in S'}g(C)+e_F(S,T')+|T|\kappa<\sum_{C\in \mathcal{C}_2'}g(C)=w(M_0),\]
which is equivalent to 
\begin{equation}\label{eq:flowC2eq1}
   \sum_{C\in S}g(C) > e_F(S,T')+|T|\kappa.
\end{equation}
(Note that we may assume $T\neq\varnothing$, as otherwise \eqref{eq:flowC2eq1} cannot hold\COMMENT{Assume $T=\varnothing$, so $T'=V(\mathcal{C}_2')$.
Then, for each $C\in S$, $e_F(\{C\},T')=|V(C)|=|V(C)\cap\dot{A}|>g(C)$.}.)
Now observe that\COMMENT{The term $e(D_S)/\kappa$ appears by ignoring the ceilings, and just looking at the definition of $D_S$.
Because of the ceilings, then, we could be adding much more, but at most $|S|$ more (and the final $|S|$ is by adding $1$ each time).}
\begin{equation}\label{eq:flowC2eq3}
\sum_{C\in S}g(C) = 
    \sum_{C\in S}\left(\left\lceil\frac{|V(C)|}{\kappa}\right\rceil+1\right)<\frac{e(D_S)}{\kappa}+2|S|.
\end{equation}
By \eqref{eq:absorbC2}, we have $|V(C)| > \kappa$ for all $C \in \mathcal{C}_2'$, so
it follows that
\begin{equation}\label{eq:flowC2eq2}
    |S|<\sum_{C\in S}|V(C)|/\kappa=e(D_S)/\kappa.
\end{equation}
Combining \eqref{eq:flowC2eq1}, \eqref{eq:flowC2eq3} and \eqref{eq:flowC2eq2}, it follows that
\begin{equation}
    \label{eq:lemc2:1}
    \frac{3e(D_S)}{\kappa} > e_F(S,T')+|T|\kappa.
\end{equation}
Next, since $|V(C)| \leq N/\kappa$ for all $C \in \mathcal{C}_2'$ by \eqref{eq:absorbC2} and $|\mathcal{C}_2'| = |\mathcal{C}_2| \leq 2N$, we have 
\begin{equation}
    \label{eq:lemc2:2}
    e(D_S)<2N^2/\kappa,
\end{equation}
Furthermore, since $e(D_S)=e_{D_S}(T)+e_{D_S}(T')+e_{D_S}(T',T)+e_{D_S}(T,T')$, we have\COMMENT{Recall $T$ and $T'$ are just sets of vertices.
Each edge between a vertex in $T'$ and a cycle in $S$ corresponds simply to this vertex lying on the cycle, which means it contributes to two edges in $D_S$ (one towards the vertex, and one away from it).
By considering all the cycles it belongs to, we recover precisely the indegree and the outdegree of that vertex in $D_S$. 
This gives the first equality.
Now, $\sum_{v\in T'}d^+_{D_S}(v)+d^-_{D_S}(v)=2e_{D_S}(T')+e_{D_S}(T',T)+e_{D_S}(T,T')$, so the second inequality is trivial.}
\begin{align*}
    e_F(S,T')&=\sum_{v\in T'}\frac{1}{2}(d^+_{D_S}(v)+d^-_{D_S}(v))\\
    &\geq \frac{1}{2}(e_{D_S}(T')+e_{D_S}(T',T)+e_{D_S}(T,T'))=\frac{1}{2}(e({D_S})-e_{D_S}(T)).
\end{align*}
Combining this with \eqref{eq:lemc2:1}, we have
\begin{equation*}
     \frac{6e({D_S})}{\kappa}>2e_F(S,T') \geq e({D_S})-e_{D_S}(T),
\end{equation*}
which implies 
\begin{equation}
\label{eq:lemc2:4}
    e_{D_S}(T)\geq \left(1-\frac{6}{\kappa}\right)e({D_S})\geq \frac{1}{2}e({D_S}).
\end{equation}

By the discussion before the claim concerning the construction of $\mathcal{C}_2'$, we have $e_{D_S}(T)\leq e_D(T)+\lambda|T|$.
This implies that either $e_{D_S}(T)\leq 2e_D(T)$ or $e_{D_S}(T)\leq 2\lambda|T|$.
If $e_{D_S}(T)\leq 2\lambda|T|$, then using \eqref{eq:lemc2:4} we obtain that $|T|\geq e(D_S)/(4\lambda)$, and combining this with \eqref{eq:lemc2:1} we have
\begin{equation*}
    \frac{3e(D_S)}{\kappa}>|T|\kappa\geq \frac{\kappa e(D_S)}{4\lambda},
\end{equation*}
so that $\kappa^2 < 12 \lambda$, contradicting our choice of $\kappa$. 
Therefore, we may assume 
\begin{equation}
\label{eq:DST}
e_{D_S}(T)\leq 2e_D(T). 
\end{equation}
Now we distinguish between the two cases in the statement of the lemma, i.e., when $D$ is an $(n,p,\kappa,\lambda)$-digraph and when $D$ is an $(n,p,\kappa,\lambda)$-pseudorandom digraph.

\noindent
\textbf{Case 1:} $D$ is an $(n,p,\kappa,\lambda)$-digraph. 
By \eqref{eq:DST} we have $e_{D_S}(T)\leq 2e_D(T) \leq2|T|^2$. 
Combined with \eqref{eq:lemc2:4}, we conclude that $|T|\geq \sqrt{e(D_S)/4}$.
By \eqref{eq:lemc2:1}, we have
\begin{equation*}
    \frac{3e(D_S)}{\kappa}>|T|\kappa\geq \kappa\sqrt{e(D_S)/4}
\end{equation*}
Combining this with \eqref{eq:lemc2:2}, we obtain that
\begin{equation*}
    2N^2/\kappa > e(D_S) > \kappa^4/36,
\end{equation*}
contradicting our choice of $\kappa \geq (72N^2)^{1/5}$.

\noindent
\textbf{Case 2:} $D$ is an $(n,p,\kappa,\lambda)$-pseudorandom digraph.
We further split this into two cases.
Assume first that $|T|\geq\log n/(50p)$, so by \ref{def:digraphitem5} and \eqref{eq:DST} we have that $e_{D_S}(T)\leq2e_D(T)\leq200|T|^2p$.
Combined with \eqref{eq:lemc2:4}, we have that $|T|\geq \sqrt{e(D_S)/(400p)}$. By \eqref{eq:lemc2:1}, we have
\begin{equation*}
    \frac{3e(D_S)}{\kappa} > |T|\kappa\geq \kappa\sqrt{e(D_S)/(400p)}
\end{equation*}
Combining this with \eqref{eq:lemc2:2}, we obtain that
\begin{equation*}
    2N^2/\kappa > e(D_S) > \kappa^4/(3600p),
\end{equation*}
contradicting our choice of $\kappa\geq(7200N^2p)^{1/5}$.

We may thus assume that $|T|<\log n/(50p)$.
In this case, we may consider any superset of $T$ of size $\log n/(50p)$ and, by applying \ref{def:digraphitem5} to this superset and considering \eqref{eq:DST}, we have that $e_{D_S}(T) \leq 2e_D(T) \leq 2\log^2n/(25p)$.
Then, by \eqref{eq:lemc2:4}, 
\[e(D_S)\leq 4\log^2n/(25p).\]
Now, using \eqref{eq:lemc2:1} and the fact that $T\neq\varnothing$, we also have that 
\[e(D_S)>|T|\kappa^2/3 \geq \kappa^2/3.\]
But these two bounds on $e(D_S)$ lead to a contradiction on our choice of $\kappa\geq\sqrt{12/(25p)}\log n$.
\end{claimproof}

We interpret the flow given by \cref{cl:flowc2} as follows.
As all capacities are integers, there exists an integer flow with value $\sum_{C\in \mathcal{C}'_2}g(C)$, so assume $\phi$ is such an integer flow.
For each cycle $C\in\mathcal{C}_2$, writing $C' = i(C)$, let $V_C\coloneqq\{v\in V(C') \subseteq V(C): \phi(C'v)=1\}$ be the vertices assigned to $C$.
As $\phi$ saturates all edges $sC'$, we have $|V_C|=g(C')=g(C)$.
The capacity $\kappa$ of the edges $vt$ with $v\in \dot{A}$ ensures that no vertex is assigned to more than $\kappa$ cycles of $\mathcal{C}_2$.

We will now iteratively assign two edges $e_1, e_2$ to each cycle $C\in \mathcal{C}_2$ so that $E(C) \cup \{e_1, e_2\}$ can be decomposed into two $(A^+,A^-)$-paths, where $e_1 \in f(v_1)$, $e_2 \in f(v_2)$ and $v_1, v_2 \in V_C$.
We do this as follows using \cref{lem:findabsedges}.
Assume that, for some of the cycles in $\mathcal{C}_2$, we have already found two edges as described above, and assume that we next want to do this for $C\in\mathcal{C}_2$.
We say that an edge $e\in E^{\mathrm{ab}}$ is \emph{available} if it has not been assigned to any of the previous cycles.
Then, for each $v\in V_C$, the number of edges $e\in f(v)$ that are available is at least $\kappa+2$ (since no vertex is assigned to more than $\kappa$ cycles and $\mathcal{A}=(E^{\mathrm{ab}},f)$ is an $(\dot{A}, 2\kappa + 1)$-absorbing structure).
Thus, we may define a $(V_C,\kappa+2)$-absorbing structure $\mathcal{A}_C$ using available edges from $E^{\mathrm{ab}}$ by selecting, for each $v \in V_C$, any set of $\kappa+2$ available edges at $v$.
Then, with $V_C$ and $\mathcal{A}_C$ playing the roles of $S$ and $\mathcal{A}$, respectively, \cref{lem:findabsedges} gives two edges $e_1\in f(v_1)$ and $e_2 \in f(v_2)$ with $v_1, v_2 \in V_C$ such that $E(C)\cup \{e_1,e_2\}$ can be decomposed into two $(A^+,A^-)$-paths.  
After repeating this for every cycle $C \in \mathcal{C}_2$ and treating each of the remaining edges in $E^{\mathrm{ab}}$ as an $(A^+,A^-)$-path, we have an edge decomposition of  $E(\mathcal{C}_2)\cup E^{\mathrm{ab}}$ into $(A^+,A^-)$-paths, which is a perfect decomposition by \cref{pr:PartDecomp2}.
\end{proof}

We have seen earlier in \cref{lem:findabsedges} how a single long or medium cycle can be absorbed using an absorbing structure. 
The following lemma shows how to absorb a single short cycle using our absorbing structure.
In fact, it is slightly more general: it shows how to absorb a short Eulerian digraph, namely one that is the union of two short edge-disjoint paths.
Another difference is that now we must work with vertices in $A^0$.
As before, in order to absorb a cycle $C$, we take two suitable vertices $v_1,v_2 \in V(C)$.
For long and medium cycles, both $v_1$ and $v_2$ had been in $\dot{A}$, and we used a single edge in $f(v_1)$ and a single edge in $f(v_2)$ for absorption.
For short cycles, if $v_1,v_2\in\dot{A}$, we do the same, but here one or both may be in $A^0$.
If, for instance, $v_1\in A^0$, we use a pair of edges from $f(v_1)$ (which should be thought of as an $(A^+,A^-)$-path of length two through~$v_1$) for absorption.  

\begin{lemma}
\label{lem:findleavingedges}
Let $D$ be a $(n,p,\kappa,\lambda)$-digraph and $v_1,v_2\in V(D)$.
Let $P_1\subseteq D$ be a $(v_1,v_2)$-path and $P_2\subseteq D$ be a $(v_2,v_1)$-path which are edge-disjoint.
Let $k\geq\max_{i\in[2]}|V(P_i)\cap \dot{A}|$.
Let $\mathcal{A}=(E^{\mathrm{ab}},f)$ be a $(\{v_1,v_2\},k+1)$-absorbing structure such that, for each $i\in[2]$, $E(P_i)\cap E^{\mathrm{ab}} = \varnothing$.
Then, for each $i\in[2]$ there exists a set $E_i\subseteq f(v_i)$, where $|E_i|=1$ if $v_i\in \dot{A}$ and $|E_i|=2$ otherwise, such that the digraph with edge set $E(P_1)\cup E(P_2)\cup E_1 \cup E_2$ can be decomposed into two $(A^+,A^-)$-paths.
\end{lemma}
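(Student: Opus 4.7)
The plan is to perform a case analysis on the memberships of $v_1$ and $v_2$ in $A^+$, $A^-$, and $A^0$. In each case I will exhibit explicit choices for $E_1$, $E_2$, together with two $(A^+, A^-)$-paths $Q_1, Q_2$ whose edge sets partition $E(P_1) \cup E(P_2) \cup E_1 \cup E_2$; by \cref{pr:PartDecomp2} this is a perfect decomposition into two $(A^+,A^-)$-paths, which is what the lemma requires. The existence of suitable absorbing edges will follow by a pigeonhole argument: each endpoint of an absorbing edge to be chosen lies in either $A^+$ or $A^-$, and it must avoid the corresponding intersection of $V(P_1) \cup V(P_2)$ with $A^+$ or $A^-$, a set of size at most $k$ by hypothesis; since $f(v_i)$ offers $k+1$ choices for every relevant endpoint, a valid selection always exists.

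First, I would treat the case $v_1, v_2 \in \dot A$. Up to reversing all edge orientations (which swaps $A^+ \leftrightarrow A^-$ together with $P_1 \leftrightarrow P_2$), I may assume $v_1 \in A^+$. If $v_2 \in A^-$, writing $e_1 = v_1 x_1 \in f(v_1)$ and $e_2 = y_2 v_2 \in f(v_2)$, I take $Q_1 \coloneqq P_1$ and $Q_2 \coloneqq y_2 v_2 P_2 v_1 x_1$; for $Q_2$ to be a path I need $y_2 \notin V(P_2) \cap A^+$ and $x_1 \notin V(P_2) \cap A^-$, each excluding at most $k$ choices. If $v_2 \in A^+$, writing $e_i = v_i x_i$, I take $Q_1 \coloneqq v_1 P_1 v_2 x_2$ and $Q_2 \coloneqq v_2 P_2 v_1 x_1$, with the analogous avoidance conditions $x_i \notin V(P_{3-i}) \cap A^-$ for $i = 1,2$.

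Next, I would consider the case where exactly one of $v_1, v_2$ lies in $A^0$. Swapping $(v_1, P_1) \leftrightarrow (v_2, P_2)$ if needed, I may assume $v_2 \in A^0$, so that $E_2 = \{a_2 v_2, v_2 b_2\}$ with $a_2 \in A^+$ and $b_2 \in A^-$. If $v_1 \in A^+$ with $e_1 = v_1 x_1$, I take $Q_1 \coloneqq v_1 P_1 v_2 b_2$ and $Q_2 \coloneqq a_2 v_2 P_2 v_1 x_1$; the avoidance conditions $b_2 \notin V(P_1) \cap A^-$, $a_2 \notin V(P_2) \cap A^+$, and $x_1 \notin V(P_2) \cap A^-$ each forbid at most $k$ vertices. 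The subcase $v_1 \in A^-$ is analogous, with $e_1 = y_1 v_1$ prepended to $P_1$ in $Q_1$.

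Finally, when $v_1, v_2 \in A^0$, I write $E_i = \{a_i v_i, v_i b_i\}$ with $a_i \in A^+$ and $b_i \in A^-$ for $i = 1, 2$, and take $Q_1 \coloneqq a_1 v_1 P_1 v_2 b_2$ and $Q_2 \coloneqq a_2 v_2 P_2 v_1 b_1$. Crucially, this pairing splits each length-$2$ absorbing path $a_i v_i b_i$ between $Q_1$ and $Q_2$; the opposite pairing would instead produce a closed walk together with two isolated short paths rather than two $(A^+, A^-)$-paths. The avoidance conditions reduce to $a_1, b_2 \notin V(P_1)$ and $a_2, b_1 \notin V(P_2)$, each of which excludes at most $k$ vertices by the sign restrictions. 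The main subtlety throughout the proof is identifying the correct pairing of absorbing edges with the portions of $P_1 \cup P_2$, especially at $A^0$-vertices; once the pairings are fixed, the pigeonhole argument is uniform across all cases.
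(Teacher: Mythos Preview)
Your proof is correct and follows essentially the same approach as the paper. The only difference is presentational: the paper handles all cases uniformly by defining, for each $i\in[2]$, auxiliary paths $P_i^+$ and $P_i^-$ (each either trivial or a single absorbing edge, depending on whether $v_i\in A^+$, $A^-$, or $A^0$) and then writing the two output paths as $P_1^-P_1P_2^+$ and $P_2^-P_2P_1^+$; your explicit case analysis unpacks exactly these same constructions and avoidance conditions.
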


\begin{proof}
For each $i\in[2]$, we consider three cases.
If $v_i\in A^+$, by our choice of $k$, there is some edge $v_iy_i\in f(v_i)$ with $y_i\notin V(P_{3-i})$\COMMENT{By pigeonhole, we have $y_i\notin V(P_{3-i})\cap\dot{A}$. Here we are implicitly using the definition of $\mathcal{A}$ in the sense that $y_i\in\dot{A}$ to derive the conclusion.}.
In such a case, we let $P_i^+\coloneqq v_iy_i$ and $P_i^-\coloneqq v_i$.
If $v_i\in A^-$, similarly, there is some edge $x_iv_i\in f(v_i)$ with $x_i\notin V(P_i)$, and we let $P_i^+\coloneqq v_i$ and $P_i^-\coloneqq x_iv_i$.
Otherwise, we have $v_i\in A^0$ and, again by assumption, there must be two edges $x_iv_i,v_iy_i\in f(v_i)$ such that $x_i\notin V(P_i)$ and $y_i\notin V(P_{3-i})$.
In this case, we let $P_i^+\coloneqq v_iy_i$ and $P_i^-\coloneqq x_iv_i$.
In all cases we set $E_i\coloneqq E(P_i^+)\cup E(P_i^-)$.

Now let $P\coloneqq P_1^-P_1P_2^+$ and $P'\coloneqq P_2^-P_2P_1^+$.
Clearly, $P$ and $P'$ decompose $E(P_1)\cup E(P_2)\cup E_1\cup E_2$.
Furthermore, both $P$ and $P'$ are $(A^+,A^-)$-paths by the definition of $\mathcal{A}$ and our choice of $x_i,y_i$.
Indeed, for each $i\in[2]$, by definition we have that the first vertex of $P_i^-$ lies in $A^+$, and the last vertex of $P_i^+$ lies in $A^-$, which immediately yields the result.
\end{proof}

The following lemma shows how an absorbing structure can be used to absorb a collection of short cycles.
\begin{lemma}
\label{lem:absorbc3}
Let $D$ be an $(n,p,\kappa,\lambda)$-digraph with $\kappa \geq 4N/n$.
Let $\mathcal{C}_3$ be a collection of at most $N$ edge-disjoint cycles such that, for each $C\in \mathcal{C}_3$, we have  $|V(C)\cap\dot{A}|\leq \kappa$.
Let $\mathcal{A}=(E^{\mathrm{ab}},f)$ be an $(A^0\cup \dot{A},3\kappa)$-absorbing structure with $E(\mathcal{C}_3)\cap E^{\mathrm{ab}}=\varnothing$.
Then, the digraph with edge set $E(\mathcal{C}_3)\cup E^{\mathrm{ab}}$ can be decomposed into a set of cycles $\mathcal{C}^*$ and a digraph $Q$ such that
\begin{enumerate}[label=$(\mathrm{S}\arabic*)$]
\item\label{short1} $E(\mathcal{C}^*) \subseteq E(\mathcal{C}_3)$;
\item\label{short2} for all $C\in \mathcal{C}^*$ we have $|V(C)\cap \dot{A}|>\kappa$, and 
\item\label{short3} $Q$ has a perfect decomposition in which each path is an $(A^+,A^-)$-path.
\end{enumerate}
\end{lemma}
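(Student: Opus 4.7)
My plan is to prove the lemma by an iterative absorption procedure. I maintain an Eulerian multidigraph $H$ of pending edges (initialised to $E(\mathcal{C}_3)$), a growing collection $\mathcal{C}^*$ of cycles with $E(\mathcal{C}^*)\subseteq E(\mathcal{C}_3)$ and $|V(C)\cap\dot{A}|>\kappa$ for each $C\in\mathcal{C}^*$, and a growing edge-disjoint digraph $Q$ that will ultimately absorb all remaining edges into $(A^+,A^-)$-paths. In each round I apply \cref{th:compress} to decompose $H$ into at most $N$ cycles, immediately move any non-short cycle into $\mathcal{C}^*$ (removing its edges from $H$), and work with the resulting short cycles $\mathcal{S}$.

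To absorb the cycles in $\mathcal{S}$, I set up a flow problem $\mathit{FP}(\mathcal{S};2,h)$ in the sense of \cref{def:flow}, where $h(v)=3\kappa$ if $v\in\dot{A}$ and $h(v)=\lfloor 3\kappa/2\rfloor$ if $v\in A^0$. This reflects the fact that each invocation of \cref{lem:findleavingedges} consumes one edge of $f(v)$ when $v\in\dot{A}$ and two edges when $v\in A^0$, while $\mathcal{A}$ supplies exactly $3\kappa$ edges at each vertex. If the max flow saturates every source-to-cycle edge, then each $C\in\mathcal{S}$ is assigned two vertices $v_1^C,v_2^C$; I apply \cref{lem:findleavingedges} to $C$ with these vertices to obtain two $(A^+,A^-)$-paths, which I add to $Q$. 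After processing all of $\mathcal{S}$, the unused edges of $E^{\mathrm{ab}}$ form single-edge or (through an $A^0$-vertex) length-two $(A^+,A^-)$-paths that also go into $Q$, and \cref{pr:PartDecomp2} then guarantees that $Q$ has a perfect decomposition.

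If the max flow does not saturate, max-flow min-cut yields a cut of capacity strictly less than $2|\mathcal{S}|$. Writing $S'\subseteq\mathcal{S}$ for the cycles not separated from the source and $T\subseteq V(\mathcal{S})$ for the vertices not separated from the sink, and letting $D_{S'}$ denote the union of the cycles in $S'$, I will argue (in the style of \cref{cl:flowc2}) that the cut inequality together with \ref{def:digraphitem3}, the hypothesis $\kappa\geq 4N/n$, and the edge-count identity $e(D_{S'})\leq\sum_{C\in S'}|V(C)|$ force $|T\cap\dot{A}|>\kappa$. Since $D_{S'}$ is Eulerian with $|V(D_{S'})\cap\dot{A}|\geq|T\cap\dot{A}|>\kappa$, applying \cref{th:compress} to decompose $D_{S'}$ into cycles produces at least one cycle $C^\circ$ with $|V(C^\circ)\cap\dot{A}|>\kappa$, which I extract into $\mathcal{C}^*$. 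The remaining (still Eulerian) edges of $E(D_{S'})\setminus E(C^\circ)$ are returned to $H$ for the next round. Termination is immediate, since $|E(H)|$ strictly decreases each round and the invariant $|E(H)|+|E(\mathcal{C}^*)|+|E(Q)|=|E(\mathcal{C}_3)|+|E^{\mathrm{ab}}|$ is preserved.

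The main obstacle will be the third paragraph: extracting a non-short cycle from an unsaturated flow. Unlike in \cref{lem:absorbc2}, every cycle here has at most $\kappa$ vertices in $\dot{A}$, so the argument cannot hinge on any single cycle being long but must instead exploit that many cycles are piled on the small vertex set $T$. I expect to combine the min-cut inequality (bounding $e_F(S',V(\mathcal{S})\setminus T)$ below by a capacity-based quantity) with a two-sided accounting for $A^0$-edges, using \ref{def:digraphitem3} on one side and $h(v)=\lfloor 3\kappa/2\rfloor$ on the other, to derive a lower bound on $|T\cap\dot{A}|$ strong enough to guarantee a non-short cycle in any \cref{th:compress}-decomposition of $D_{S'}$. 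Calibrating the numerical constants so that this bound comfortably exceeds $\kappa$ with only the hypothesis $\kappa\geq 4N/n$ will be the most delicate part.
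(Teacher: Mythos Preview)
Your extraction step in the unsaturated case does not work. You claim that because $|V(D_{S'})\cap\dot{A}|>\kappa$, any decomposition of $D_{S'}$ via \cref{th:compress} must contain a cycle $C^\circ$ with $|V(C^\circ)\cap\dot{A}|>\kappa$. But $D_{S'}$ is, by construction, the edge-disjoint union of cycles from $S'\subseteq\mathcal{S}$, each of which satisfies $|V(C)\cap\dot{A}|\leq\kappa$; those very cycles are already a decomposition of $D_{S'}$ into short cycles. A large vertex set says nothing about the lengths of cycles in \emph{some} decomposition, so there is no mechanism to extract a non-short cycle, and the procedure can return all of $E(D_{S'})$ to $H$ unchanged. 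A secondary issue: your capacity $h(v)=3\kappa$ for $v\in\dot{A}$ is too generous. Each invocation of \cref{lem:findleavingedges} requires $\kappa+1$ \emph{available} edges at $v_i$, so if $v$ is assigned to anywhere near $3\kappa$ cycles you will have exhausted $f(v)$ long before you can form the required $(\{v_1,v_2\},\kappa+1)$-absorbing structure.

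The paper's argument is structurally different and avoids both problems. It uses the flow $\mathit{FP}(\mathcal{C};2,\kappa)$ with capacity $\kappa$ (not $3\kappa$), so that at least $2\kappa$ edges remain available at every vertex throughout. It never tries to find a non-short cycle inside the unsaturated part; instead it takes the set $T$ of vertices reachable from $s$ in the residual graph, shows $|T|\leq n'/2$ from $\kappa\geq 4N/n$, and then classifies each cycle by how many of its assigned vertices lie in $T'=V(\mathcal{C})\setminus T$. Cycles with two $T'$-vertices are absorbed directly; cycles with one $T'$-vertex are absorbed in pairs that share a vertex of $T$ (applying \cref{lem:findleavingedges} to the combined two-path walk), and whatever edges remain are shown to live on at most $3|T|/2\leq 3n'/4$ vertices. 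Only \emph{then} is \cref{th:compress} reapplied, with any resulting non-short cycles moved to $\mathcal{C}^*$. Termination comes from geometric shrinkage of $|V(\mathcal{C})|$, not from edge count or cycle extraction.
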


\begin{proof}
We will construct $Q$ and $\mathcal{C}^*$ over multiple rounds. 
We start with a set of cycles $\mathcal{C}\coloneqq\mathcal{C}_3$ and a set of edges $F^{\mathrm{ab}}\coloneqq E^{\mathrm{ab}}$, and we set $Q\coloneqq (V(D), \varnothing )$ and $\mathcal{C}^*\coloneqq \varnothing$.
In each round, we will update $\mathcal{C}$, $F^{\mathrm{ab}}$, $Q$ and $\mathcal{C}^*$ by moving some edges from $E(\mathcal{C})\cup F^{\mathrm{ab}}$ to $E(Q)\cup E(\mathcal{C}^*)$.
In particular, in each round, we will combine edges from $F^{\mathrm{ab}}$ with some Eulerian subdigraph of $E(\mathcal{C})$ to form $(A^+,A^-)$-paths (by using   \cref{lem:findleavingedges}) and move the (edges of these) paths into $Q$.
Since we only ever add $(A^+,A^-)$-paths to $Q$, then $Q$ always has a perfect decomposition by \cref{pr:PartDecomp2}.
(Throughout we will also maintain that $F^{\mathrm{ab}}$ can be decomposed into $(A^+, A^-)$-paths.)
After these paths have been added to $Q$, what remains of $E(\mathcal{C})$ will be Eulerian and reside on a significantly smaller number of vertices.
We will then apply \cref{th:compress} to decompose what remains of $E(\mathcal{C})$ into cycles: any medium or long cycle in this decomposition (i.e., those that have more than $\kappa$ vertices in $\dot{A}$) will be added to $\mathcal{C}^*$, while the remaining cycles in the decomposition form the set $\mathcal{C}$ for the next round. 
Since $|V(\mathcal{C})|$ decreases in each round, this process will stop after a finite number of rounds.
At that point, we add any remaining edges from $F^{\mathrm{ab}}$, decomposed into $(A^+,A^-)$-paths, into $Q$, which will have a perfect decomposition.

It is important that we use edges/paths from our absorbing structure carefully in each round so that there are sufficiently many choices available at each vertex in future rounds.
By solving a suitable flow problem, we will make sure that, over the course of all rounds, we use at most $\kappa$ edges/paths from $E^{\mathrm{ab}}$ at each vertex.
This will ensure there are always at least $2 \kappa$ choices of edges/paths available in $F^{\mathrm{ab}}$ at every vertex in every round, which will allow us to construct suitable absorbing (sub)structures in order to apply \cref{lem:findleavingedges}.

Let us now give the details of this iterative process.
At the start of each round we are given a digraph $Q$, a set of edges $F^{\mathrm{ab}} \subseteq E^{\mathrm{ab}}$ and two sets of cycles $\mathcal{C}$ and $\mathcal{C}^*$, which have been updated in previous rounds and satisfy the following properties:
\begin{enumerate}[label=(\alph*)]
    \item\label{item:disjoint} 
     $E(\mathcal{C}_3)\cup E^{\mathrm{ab}}$ is the disjoint union of $E(Q)$, $F^{\mathrm{ab}}$, $E(\mathcal{C})$, and $E(\mathcal{C}^*)$; 
    \item\label{item:Q} $Q$ can be decomposed into $(A^+, A^-)$-paths;
    \item\label{item:propa} writing $n' \coloneqq |V(\mathcal{C})|$, we have $|\mathcal{C}|\leq c'n'\log n'$ (where $c'$ is the constant from \cref{th:compress}) and $|V(C) \cap \dot{A}| \leq \kappa$ for all $C \in \mathcal{C}$, and
    \item\label{item:Cstar} $|V(C) \cap \dot{A}|>\kappa$ for all $C \in \mathcal{C}^*$. 
\end{enumerate}
The digraph $Q$ and the sets $F^{\mathrm{ab}}$ and $\mathcal{C}$ are updated several times throughout each round, and the notation will always refer to their updated form. 

Recall that, as stated in \cref{def:absstruct}, we may think of $\mathcal{A} = (E^{\mathrm{ab}}, f)$ as a set of edge-disjoint paths of length $1$ or $2$.
In the same way, we also think of the edges of $F^{\mathrm{ab}}$ as paths of length $1$ or $2$.
For any $v \in A^+ \cup A^-$, we think of each edge in $F^{\mathrm{ab}} \cap f(v)$ as an $(A^+, A^-)$-path of length $1$.
Because of the way we use edges for absorption (i.e., by using \cref{lem:findleavingedges}), for any $v \in A^0$, the set $F^{\mathrm{ab}} \cap f(v)$ will always contain the same number of edges from $A^+$ to $v$ as from $v$ to $A^-$, and these will be (implicitly) paired up arbitrarily and thought of as $(A^+, A^-)$-paths of length $2$.
Note that the pairing is updated (arbitrarily) every time $F^{\mathrm{ab}}$ is updated.
For each vertex $v$, let $a(v)$ denote the current number of available paths in $F^{\mathrm{ab}} \cap  f(v)$, that is, 
\[a(v)=\begin{cases}d^+_{F^{\mathrm{ab}}\cap f(v)}(v)&\text{if }v\in A^+,\\
d^-_{F^{\mathrm{ab}}\cap f(v)}(v)&\text{if }v\in A^-,\\
d^+_{F^{\mathrm{ab}}\cap f(v)}(v) = d^-_{F^{\mathrm{ab}}\cap f(v)}(v)&\text{if }v\in A^0.\end{cases}\]
As we want to use at most $\kappa$ paths at each vertex $v\in V(\mathcal{C})$, we define the number of \emph{ready paths} at $v$ as $r(v)\coloneqq a(v)-2\kappa$.
Throughout, we implicitly update the values of $a(v)$ and $r(v)$ each time we update $F^{\mathrm{ab}}$.

We further assume the following property about $\mathcal{C}$ at the start of the round:
\begin{enumerate}[label=(\alph*),start=5]
    \item\label{item:propb} for all $v\in V(\mathcal{C})$ we have at least one of $d^+_{\mathcal{C}}(v)\leq r(v)$, or $r(v)=\kappa$ (i.e., the number of cycles in $\mathcal{C}$ passing through $v$ is bounded above by $r(v)$ or $r(v) = \kappa$).
\end{enumerate}

Note that, at the start of the first round, we have $Q=(V(D), \varnothing)$, $F^{\mathrm{ab}}=E^{\mathrm{ab}}$, $\mathcal{C} = \mathcal{C}_3$ and $\mathcal{C}^* = \varnothing$, so \ref{item:disjoint}--\ref{item:propb} hold.

We now show how to update $Q$, $\mathcal{C}$, and $\mathcal{C}^*$ and check that \ref{item:disjoint}--\ref{item:propb} hold at the end of the round.
Consider the flow problem $\mathit{FP}(\mathcal{C}; 2,\kappa)$ and let $\phi$ be a maximum integer flow.
Let $F_\phi$ be the residual digraph of $F=F(\mathcal{C})$ under~$\phi$.
Set
$T\coloneqq\{v\in V(\mathcal{C}): F_\phi \text{ contains an }(s,v)\text{-path}\}$ and $T'\coloneqq V(\mathcal{C})\setminus T$.

We establish a bound on $|T|$ for later.
Since the cut-set $M_0\coloneqq \{sC:C\in \mathcal{C}\}$, by \ref{item:propa}, has capacity $2|\mathcal{C}|\leq 2c'n'\log n'$, the max-flow min-cut theorem (\cref{thm:maxflow}) implies that $\mathit{val}(\phi) \leq 2c'n'\log n'$.
Furthermore, all vertices in $T$ must have $\kappa$ units of flow going through them in $\phi$, as otherwise we would immediately be able to increase the flow.
Therefore,
\begin{equation*}
    \kappa|T|\leq\mathit{val}(\phi)\leq 2c'n'\log n',
\end{equation*}
which implies
\begin{equation}
    \label{eq:lemc5:1}
    |T|\leq \frac{2c'n'\log n'}{\kappa}\leq \frac{n'}{2},
\end{equation}
as $\kappa \geq 4N/n\geq 4c'\log n'$.

We use $\phi$ to assign vertices to cycles as follows.
First, we greedily decompose $\phi$ into single-unit flows.
As each single-unit flow goes through one cycle $C\in \mathcal{C}$ and one vertex $v\in V(C)$, we understand this as assigning $v$ to $C$.
Note that for every $v \in V(\mathcal{C})$, the flow $\phi(vt)$ through the edge $vt$ satisfies
\begin{equation}
\label{eq:flow}
\phi(vt) \leq \min\{d^+_{\mathcal{C}}(v), \kappa\} \leq r(v), 
\end{equation}
where the last inequality holds by \ref{item:propb}.

We partition $\mathcal{C}$ into three sets $\mathcal{C} = \mathcal{C}^0 \cup \mathcal{C}^1 \cup \mathcal{C}^2$, where $\mathcal{C}^i$ is the set of cycles $C\in\mathcal{C}$ that are assigned exactly $i$ vertices from $T'$.
Recall that we decomposed the flow $\phi$ into single-unit flows.
For each $i\in[2]_0$, let $\phi^i$ be the flow that is given by the sum of the single-unit flows of the decomposition that pass through cycles in $\mathcal{C}^i$.
In particular, this means that
$\phi = \phi^1 + \phi^2 + \phi^3$ and, for each $v\in V(\mathcal{C})$, $\phi^i(vt)$ is the number of cycles in $\mathcal{C}^i$ to which $v$ is assigned.
We next show how to process the cycles in each $\mathcal{C}^i$, but first we need the following claim.

\begin{claim}
\label{cl:maxflow}
For all cycles $C\in \mathcal{C}^1$, we have $|V(C) \cap T'| = 1$ (and so the unique vertex in $V(C) \cap T'$ must be assigned to $C$).
In particular, for all $v \in T'$ we have $\phi^1(vt)  = d^+_{\mathcal{C}^1}(v)$.

For all cycles $C\in \mathcal{C}^0$, we have $|V(C) \cap T'| = 0$.
\end{claim}

\begin{claimproof}
For all $C\in \mathcal{C}^0 \cup \mathcal{C}^1$, note first that there is a path from $s$ to $C$ in $F_\phi$.
Indeed, if $C$ is assigned fewer than two vertices, then the path is immediate, while if $C$ is assigned two vertices, at least one of them, say $u$, is in $T$, and so the $(s,u)$-path in $F_\phi$ (which exists by the definition of $T$) can be extended to $C$.
Now any vertices $v\in V(C)$ that are not assigned to $C$ must lie in $T$ by definition, as we can extend the $(s,C)$-path in $F_\phi$ to $v$.
Therefore, for each $i\in\{0,1\}$ and all $C \in \mathcal{C}^i(v)$ we must have $|V(C) \cap T'| = i$.

Now, any vertex $v \in T'$ that belongs to a cycle $C \in \mathcal{C}^1$ is also assigned to it, establishing that $\phi^1(vt)  = d^+_{\mathcal{C}^1}(v)$ for all $v \in T'$. 
\end{claimproof}

We start by processing the cycles in $\mathcal{C}^2$.
For each cycle $C\in \mathcal{C}^2$, let $v_1,v_2 \in T'$ be such that $\phi(Cv_i)=1$ for each $i\in[2]$, i.e., these are the vertices assigned to $C$.
We split $C$ into a $(v_1,v_2)$-path $P_{12}$ and a $(v_2,v_1)$-path $P_{21}$.
We select any $\kappa +1$ available paths at each $v_i$ from $F^{\mathrm{ab}}$ to define a $(\{v_1,v_2\},\kappa+1)$-absorbing structure $\mathcal{A}_C$ (we show below that this is always possible). 
We then apply \cref{lem:findleavingedges} to the paths $P_{12},P_{21}$ and the absorbing structure $\mathcal{A}_C$ with $k= \kappa$.
Thus, for each $i\in[2]$ we obtain an available path $E_i \subseteq f(v_i) \cap F^{\mathrm{ab}}$ such that $E(P_{12}) \cup E(P_{21}) \cup E_1 \cup E_2$ can be decomposed into two $(A^+, A^-)$-paths $P_1'$ and $P_2'$.
For each $i\in[2]$, we add the edges of $P_i'$ to $Q$, remove $E_i$ from $F^{\mathrm{ab}}$ and remove $C$ from $\mathcal{C}$.
We repeat this for all cycles in $\mathcal{C}^2$. 

We now check that it is always possible to find the desired absorbing structure $\mathcal{A}_C$.
Notice that, in order to process $\mathcal{C}^2$, the number of available paths that we use at any vertex $v$ is the number of cycles of $\mathcal{C}^2$ to which $v$ is assigned, which at the start of the round is $\phi^2(vt)\leq\min\{d^+_\mathcal{C}(v), \kappa\}
\leq r(v) = a(v) - 2\kappa$ (by \eqref{eq:flow}).
This means there are always $2 \kappa$ available paths at every vertex each time we apply \cref{lem:findleavingedges}.

After processing $\mathcal{C}^2$, for any vertex $v\in T'$, we have used at most $\phi^2(vt)$ available paths from~$f(v)$.
Recalling that we always update $a(v)$, we now have for any $v \in T'$ that
\begin{equation}
\label{eq:avedges}
a(v)\geq \phi(vt) + 2\kappa - \phi^2(vt) = \phi^1(vt) + 2\kappa = d^+_{\mathcal{C}^1}(v) + 2\kappa,
\end{equation}
where we have used \eqref{eq:flow} for the first inequality and \cref{cl:maxflow} for the last equality.
The first equality holds as $\phi^0(vt)=0$ by definition, since $v\in T'$.
Note that \ref{item:propb} holds for the current value of $a(v)$ and the current set of cycles $\mathcal{C}=\mathcal{C}^0 \cup \mathcal{C}^1$, since $a(v)$ is unchanged for $v \in T$, and that \eqref{eq:avedges} confirms \ref{item:propb} for $v \in T'$ (by using \cref{cl:maxflow} to note that $d^+_{\mathcal{C}^0}(v) = 0$ for all $v \in T'$). 

Next we process cycles in $\mathcal{C}^1$.
Recall that, by \cref{cl:maxflow}, such cycles contain exactly one vertex of $T'$.
Let $R$ be an empty set of edges; this set will be updated while processing $\mathcal{C}^1$ and will always form an Eulerian digraph.
We say a pair of cycles $C_1, C_2 \in \mathcal{C}^1$ is $T$-intersecting if
$\varnothing \not= V(C_1) \cap V(C_2) \subseteq T$ (and thus their unique vertices in $T'$ are distinct).
Whenever we have a $T$-intersecting pair of cycles $C_1, C_2 \in \mathcal{C}^1$, we process them as follows.
Let $v_1\neq v_2$ be the vertices of $C_1$ and $C_2$ in $T'$, respectively.
Starting from $v_1$, let $v'_1$ be the first vertex along $C_1$ in $V(C_1)\cap V(C_2)$ and define $P_{12}\coloneqq v_1C_1v'_1C_2v_2$.
Define $v'_2$ analogously, and let $P_{21}\coloneqq v_2C_2v'_2C_1v_1$.
It is easy to see that $P_{12}$ and $P_{21}$ are edge-disjoint.
Again, we construct a $(\{v_1,v_2\},2\kappa+1)$-absorbing structure $\mathcal{A}_{C_1C_2}$  by taking $2\kappa+1$ available paths at each $v_i$ from $F^{\mathrm{ab}}$; this is always possible by \eqref{eq:avedges}, as we find an absorbing structure for $v_i$ at most $d^+_{\mathcal{C}^1}(v_i)$ times.
We apply \cref{lem:findleavingedges} to the paths $P_{12},P_{21}$ and the absorbing structure $\mathcal{A}_{C_1C_2}$ with $k= 2\kappa$ to obtain available paths $E_i \subseteq f(v_i) \cap F^{\mathrm{ab}}$, for $i\in[2]$, such that $E(P_{12}) \cup E(P_{21}) \cup E_1 \cup E_2$ can be decomposed into two $(A^+,A^-)$-paths $P_1'$ and $P_2'$.
For each $i\in[2]$, we add the edges of $P_i'$ to $Q$ and remove the edges of $E_i$ from $F^{\mathrm{ab}}$. 
The remaining edges of the cycles $C_1$ and $C_2$, namely $(E(C_1) \cup E(C_2)) \setminus (E(P_{12}) \cup E(P_{21}))$, are then added to the residual digraph $R$.
Notice that the set of edges added to $R$ is Eulerian, so $R$ remains Eulerian.
Furthermore, note that all edges added to $R$ have both endpoints in $T$.
Finally, we remove $C_1$ and $C_2$ from $\mathcal{C}$ (and from $\mathcal{C}^1$).

We repeat this as long as we can find a $T$-intersecting pair of cycles in $\mathcal{C}^1$.
When no such pair can be found, then, among the remaining cycles of $\mathcal{C}^1$, any two either share a vertex in $T'$ or are vertex-disjoint.
This implies that, at this stage, the set $\overline{T}\coloneqq V(\mathcal{C}^1) \cap T'$ satisfies $|\overline{T}| \leq |T|/2$.
(To see this, for each vertex  $v \in \overline{T}$, pick a cycle $C_v \in \mathcal{C}^1$ containing $v$.
Notice that each such cycle has all its (at least two) remaining vertices in $T$ and, furthermore, the cycles $C_v$ are vertex-disjoint.)
We move all the remaining cycles of $\mathcal{C}$ (i.e., all that remain in $\mathcal{C}^1$ and all in $\mathcal{C}^0$) to $R$.
Then, $R$ is Eulerian and $V(R) \subseteq T \cup \overline{T}$ (recall any cycle in $\mathcal{C}^0$ has all its vertices in $T$ by \cref{cl:maxflow}).
Then,
\begin{equation}
\label{eq:R}
n''\coloneqq |V(R)| \leq |T| + |\overline{T}| \leq 3|T|/2 \leq 3n'/4,
\end{equation}
where the last inequality follows by \eqref{eq:lemc5:1}.

Now we decompose $R$ into at most $c'n'' \log n''$ cycles using \cref{th:compress}; any resulting cycles with more than $\kappa$ vertices in $\dot{A}$ are added to $\mathcal{C}^*$, while all other cycles are added to (the currently empty) $\mathcal{C}$.
This completes the round and the description of the sets $\mathcal{C}$, $\mathcal{C}^*$, $F^{\mathrm{ab}}$, and $Q$ ready for the next round.
Notice that at the end of the round $V(\mathcal{C})$ is smaller than at the start, by \eqref{eq:R}.
It remains to check that \ref{item:disjoint}--\ref{item:propb} hold.

It immediately follows by construction that \ref{item:disjoint}--\ref{item:Cstar} hold (\ref{item:disjoint} holds because we only  move edges between the sets, and \ref{item:Q} holds because we only add $(A^+,A^-)$-paths to $Q$).
Finally, we prove that \ref{item:propb} holds too. As noted after \eqref{eq:avedges}, we know \ref{item:propb} holds after $\mathcal{C}^2$ is processed. After that, when processing $\mathcal{C}^1$, whenever an application of Lemma~\ref{lem:findleavingedges}  reduces $a(v)$ by $1$, it also reduces $d^+_{\mathcal{C}}(v)$ by $1$,  so condition \ref{item:propb} is maintained to the end of the round.

Thus, we may iterate the described process through the rounds, until we obtain the final sets $Q$, $\mathcal{C} = \varnothing$, $\mathcal{C}^*$ and $F^{{\rm ab}}$ satisfying \ref{item:disjoint}--\ref{item:propb}.
(Recall that the process must terminate since, by \eqref{eq:R}, the set of cycles that is considered for each subsequent round is contained in a smaller set of vertices than the previous.)
The remaining paths of $F^{{\rm ab}}$ are $(A^+,A^-)$-paths; these paths are removed from $F^{{\rm ab}}$ and added to $Q$. 

It is straightforward to check that $Q$ and $\mathcal{C}^*$ now satisfy the conclusion of the lemma.
Indeed, over the course of all rounds, we moved all edges from $E(\mathcal{C}_3) \cup E^{\mathrm{ab}}$ to $E(Q) \cup E(\mathcal{C}^*)$.
At every stage, $Q$ was updated by adding $(A^+,A^-)$-paths (which gives a perfect decomposition of $Q$ by \cref{pr:PartDecomp2}), and $\mathcal{C}^*$ was updated by adding cycles that have more than $\kappa$ vertices in $\dot{A}$.
\end{proof}

We are finally ready to prove the main result.

\begin{proof}[Proof of \cref{thm:main}]
Recall that $D$ is either an $(n,p,\kappa, \lambda)$-digraph satisfying \ref{thmitem1}--\ref{thmitem3} or an $(n,p,\kappa, \lambda)$-pseudorandom digraph satisfying \ref{thmitem1'}, \ref{thmitem2'}, and \ref{thmitem3}, with $n \geq n_0$ (for a suitably large choice of $n_0$).
We work with both cases simultaneously.

First, one can easily check that the conditions \ref{thmitem1}--\ref{thmitem3} together with $n \geq n_0$, for a sufficiently large $n_0$, imply the conditions \ref{itemthmproofa}--\ref{itemthmproofc} below, which are precisely the parameter conditions required in order to apply
\cref{lem:absstruct1,lem:absstruct2,lem:absorbc1,lem:absorbc2,lem:absorbc3} to an $(n,p,\kappa, \lambda)$-digraph:
\begin{enumerate}[label=$(\mathrm{\alph*})$]
    \item\label{itemthmproofa} $\max\{ 100\log n, 12,(12\lambda)^{1/2}, (72 N^2)^{1/5},{4N}/{n}\}\leq \kappa < \min\{{np}/{120},N^{1/2}\} $,
    \item\label{itemthmproofb} $\lambda \leq {np}/{3}$,
    \item\label{itemthmproofc} $4np\log(2n)\leq \kappa\lambda$,
\end{enumerate}
where $N\coloneqq c'n \log n$ and $c'$ is the constant from \cref{th:compress}.
\COMMENT{
Recall that the conditions \ref{thmitem1}--\ref{thmitem3} are 
\ref{thmitem1} $\kappa=3N^{2/5}$, 
\ref{thmitem2} $np\geq365N^{2/5}$, 
\ref{thmitem3} $\lambda=\min\{np/3,\kappa^2/12\}$.
\\
All the following hold for $n \geq n_0$ for $n_0$ suitably large.
For \ref{itemthmproofa}, note that the maximum on the left hand side is dominated by $\max( (12\lambda)^{1/2}, (72N^2)^{1/5})$, which is at most $\kappa$ by \ref{thmitem1} and \ref{thmitem3}. The upper bound in \ref{itemthmproofa} holds by noting $\kappa \leq np/120$ by \ref{thmitem1} and \ref{thmitem2}, and $\kappa < N^{1/2}$ by \ref{thmitem1}.
\ref{itemthmproofb} holds by \ref{thmitem3}.
For \ref{itemthmproofc}, if $\lambda = np/3$, then $\kappa \lambda \geq 4np \log(2n)$ by \ref{thmitem1}, and if $\lambda = \kappa^2/12$, then $\kappa \lambda = \kappa^3/12 \geq N^{6/5} \geq 4np \log(2n)$.}
Similarly, one can easily check that the conditions \ref{thmitem1'}, \ref{thmitem2'}, and \ref{thmitem3} together with $n \geq n_0$, for a sufficiently large~$n_0$, imply the  conditions \ref{itemthmproofa'}, \ref{itemthmproofb}, and \ref{itemthmproofc} (with \ref{itemthmproofa'} given below), which are precisely the parameter conditions required in order to apply
\cref{lem:absstruct1,lem:absstruct2,lem:absorbc1,lem:absorbc2,lem:absorbc3} to an $(n,p,\kappa, \lambda)$-pseudorandom digraph:
\begin{enumerate}[label=$(\mathrm{\alph*}')$]
    \item\label{itemthmproofa'} $\max\{ 100\log n, 12,(12\lambda)^{1/2}, (7200 N^2p)^{1/5},\frac{4N}{n}, \sqrt{12/(25p)}\log n\}\leq \kappa < \min\{\frac{np}{120},N^{1/2}\} $.
\end{enumerate}
The pseudorandom case only makes a difference for \cref{lem:absorbc2}.
\COMMENT{
Recall that the conditions \ref{thmitem1'}, \ref{thmitem2'}, and \ref{thmitem3} are 
\ref{thmitem1'} $\kappa = 6(N^2 p)^{1/5}$, 
\ref{thmitem2'} $p \geq n^{-1/3} \log^4 n$, and
\ref{thmitem3} $\lambda = \min\{np/3, \kappa^2 / 12\}$.
\\
All the following hold for $n \geq n_0$ for $n_0$ suitably large.
For \ref{itemthmproofa'}, the maximum on the left hand side is dominated by $\max ((12\lambda)^{1/2}, (7200N^2p)^{1/5})$ (where we can exclude $\sqrt{12/(25p)} \log n$ by \ref{thmitem2'}). We see $\kappa$ is bigger than this by \ref{thmitem3} and \ref{thmitem1'}. For the upper bound we have $\kappa \leq N^{1/2}$ by \ref{thmitem1'} and $\kappa \leq np/120$ by \ref{thmitem1'} and \ref{thmitem2'}.
\ref{itemthmproofb} holds by \ref{thmitem3} again.
For \ref{itemthmproofc}, if $\lambda = np/3$, then $\kappa \lambda \geq 4np\log(2n)$ (using \ref{thmitem1'} and \ref{thmitem2'}), and if $\lambda = \kappa^2 /12$ then $\kappa \lambda = \kappa^3/12 \geq N^{6/5}p^{3/5}$ by \ref{thmitem1'}, which is at least  $4np\log(2n)$.
}

For the $(n,p,\kappa, \lambda)$-(pseudorandom) digraph $D$, let $A^+ \cup A^- \cup A^0$ be the associated partition of $V(D)$.
Write $B^+$, $B^-$, and $B^0$ for the set of vertices $v \in V(D)$ such that $\ex_D(v)>0$, $\ex_D(v)<0$, and $\ex_D(v)=0$, respectively.
From \cref{def:digraphclass1}, clearly $A^+ \subseteq B^+$ and $A^- \subseteq B^-$. 

Let $\dot{\mathcal{A}}=(\dot{E}^{\mathrm{ab}},\dot{f})$ be an $(\dot{A},12\kappa )$-absorbing structure contained in $D$, which exists by \cref{lem:absstruct1}, and let $\mathcal{A}^0=(E_0^{\mathrm{ab}},f_0)$ be an $(A^0,3\kappa )$-absorbing structure contained in $D$, which exists by \cref{lem:absstruct2}.
Note that these two absorbing structures must be edge-disjoint by definition.
We next split up $\dot{\mathcal{A}}$ into an $(\dot{A},7\kappa -1)$-absorbing structure $\dot{\mathcal{A}}_1=(\dot{E}^{\mathrm{ab}}_1,\dot{f}_1)$, an $(\dot{A},2\kappa +1)$-absorbing structure $\dot{\mathcal{A}}_2=(\dot{E}^{\mathrm{ab}}_2,\dot{f}_2)$, and an $(\dot{A},3\kappa )$-absorbing structure $\dot{\mathcal{A}}_3=(\dot{E}^{\mathrm{ab}}_3,\dot{f}_3)$.
To do so, for each $v\in \dot{A}$, we arbitrarily split the $12\kappa$ edges in $\dot{f}(v)$ into sets of size $7\kappa -1$, $2\kappa +1$ and $3\kappa$ and set these to be $\dot{f}_1(v)$, $\dot{f}_2(v)$ and $\dot{f}_3(v)$, respectively, and set $\dot{E}^{\mathrm{ab}}_i\coloneqq \bigcup_{v\in \dot{A}}\dot{f}_i(v)$ for each $i\in[3]$.
Lastly, we combine $\dot{\mathcal{A}}_3$ and $\mathcal{A}^0$ into an $(\dot{A}\cup A^0, 3\kappa)$-absorbing structure $\mathcal{A}_3=(\dot{E}^{\mathrm{ab}}_3 \cup E_0^{\mathrm{ab}}, f_3)$, where $f_3\rvert_{\dot{A}}=\dot{f}_3$ and $f_3\rvert_{A^0}=f_0$.

Consider a set of paths which consists of every individual edge in $\dot{E}^{\mathrm{ab}}$ and a partition of the edges in $E^{\mathrm{ab}}_0$ into paths of length two.
Each path is an $(A^+,A^-)$-path and, therefore, a $(B^+,B^-)$-path.
Moreover, note that, by \cref{lem:absstruct1,lem:absstruct2}, $\dot{E}^{\mathrm{ab}} \cup E^{\mathrm{ab}}_0$ contains at most $150\kappa + 5\kappa = 155\kappa$ edges incident to each $v\in \dot{A}$.
This means (by \ref{def:digraphitem1} and \ref{def:digraphitem2}) that removing all these paths from $D$ will not change the sign of the excess of any vertex $v \in V(D)$, that is, if we write $D'\coloneqq D\setminus(\dot{E}^{\mathrm{ab}} \cup E^{\mathrm{ab}}_0)$, then a vertex of positive (resp.\ negative) excess in $D'$ belongs to $B^+$ (resp.\ $B^-$).

Next, we greedily remove paths from $D'$ that start in vertices with positive excess in $D'$ and end in vertices with negative excess in $D'$ until this is no longer possible. We call the set of these paths $\mathcal{P}'$ (so every path in $\mathcal{P}'$ is a $(B^+,B^-)$-path) and set $D^*\coloneqq D'\setminus E(\mathcal{P}')$ (so that $\ex(D^*)=0$ by \cref{pr:PartDecomp1}). 
 
 We apply \cref{th:compress} to every component of $D^*$ and obtain a decomposition $\mathcal{C}$ of the edges of $D^*$ into at most $N\coloneqq c'n\log n$ cycles. Let
\begin{align*}
    \mathcal{C}_1&\coloneqq \{C\in \mathcal{C}: |V(C)\cap \dot{A}|\geq N/\kappa\},\\
    \mathcal{C}_2&\coloneqq \{C\in \mathcal{C}: \kappa < |V(C)\cap \dot{A}| < N/\kappa \},\text{ and} \\
    \mathcal{C}_3&\coloneqq \{C\in \mathcal{C}:  |V(C)\cap \dot{A}| \leq \kappa \}. 
\end{align*}
At this point, we have
\begin{align*}
    E(D) &= E(D') \cup \dot{E}^{\rm ab} \cup E_0^{\rm ab} \\
    &= E(D') \cup \dot{E}_1^{\rm ab}  \cup \dot{E}_2^{\rm ab} \cup \dot{E}_3^{\rm ab} \cup E_0^{\rm ab}  \\
    &= E(\mathcal{P}') \cup E(D^*) \cup \dot{E}_1^{\rm ab}  \cup \dot{E}_2^{\rm ab} \cup \dot{E}_3^{\rm ab} \cup E_0^{\rm ab} \\
    &= E(\mathcal{P}') \cup \left( E(\mathcal{C}_1) \cup \dot{E}_1^{\rm ab} \right)  \cup \left( E(\mathcal{C}_2) \cup \dot{E}_2^{\rm ab} \right) \cup \left( E(\mathcal{C}_3) \cup \dot{E}_3^{\rm ab} \cup E_0^{\rm ab} \right).
\end{align*}

Noting that $|\mathcal{C}_3|\leq N$, we apply \cref{lem:absorbc3} to $\mathcal{C}_3$ and $\mathcal{A}_3$ to decompose the edges of $E(\mathcal{C}_3) \cup \dot{E}_3^{\rm ab} \cup E_0^{\rm ab}$ into a set of cycles $\mathcal{C}^*_3$ and a digraph $Q$, where $Q$ has a perfect decomposition $\mathcal{P}_3$ into $(A^+,A^-)$-paths, $|\mathcal{C}^*_3|\leq |\mathcal{C}_3|$, and for all $C\in \mathcal{C}^*_3$ we have $|V(C)\cap \dot{A}|>\kappa$.
(Indeed, the fact that $|\mathcal{C}_3^*| \leq |\mathcal{C}_3|$ follows from conclusions \ref{short1} and \ref{short2} of \cref{lem:absorbc3}.
To see this, note that any cycle $C\subseteq D$ satisfies $|\{uv\in E(C):v\in\dot{A}\}|=|V(C) \cap \dot{A}|$.
Thus, by \ref{short2}, for each $C\in\mathcal{C}_3^*$ and each $C'\in\mathcal{C}_3$ we must have $|\{uv\in E(C):v\in\dot{A}\}|>|\{uv\in E(C'):v\in\dot{A}\}|$, so by \ref{short1}, $\mathcal{C}^*$ must have fewer cycles than $\mathcal{C}_3$.)
Let $\mathcal{C}^*_1\coloneqq \{C\in \mathcal{C}^*_3: |V(C)\cap \dot{A}|\geq N/\kappa\}$ and $\mathcal{C}^*_2\coloneqq \{C\in \mathcal{C}: \kappa  < |V(C)\cap \dot{A}| < N/\kappa \}$ and note that, as $|\mathcal{C}^*_3|\leq |\mathcal{C}_3|$, we have $|\mathcal{C}^*_1|,|\mathcal{C}^*_2|\leq N$.

Next, we apply \cref{lem:absorbc1} to $\mathcal{C}_1\cup \mathcal{C}^*_1$ and $\dot{\mathcal{A}}_1$; this shows that the digraph with edge set $E(\mathcal{C}_1\cup \mathcal{C}^*_1)\cup \dot{E}^{\mathrm{ab}}_1$ has a perfect decomposition $\mathcal{P}_1$ into $(A^+,A^-)$-paths.

In the same way, applying \cref{lem:absorbc2} to $\mathcal{C}_2\cup \mathcal{C}^*_2$ and $\dot{\mathcal{A}}_2$ shows that the digraph with edge set $E(\mathcal{C}_2\cup \mathcal{C}^*_2)\cup \dot{E}^{\mathrm{ab}}_2$ has a perfect decomposition $\mathcal{P}_2$ into $(A^+,A^-)$-paths.

Now it is easy to check that $\mathcal{P}' \cup \mathcal{P}_1 \cup \mathcal{P}_2 \cup \mathcal{P}_3$ is a decomposition of $E(D)$ into paths, and every path is a $(B^+, B^-)$-path, so this is a perfect decomposition of $D$ by \cref{pr:PartDecomp2}.
\end{proof}


\section{Path decompositions of random digraphs}
\label{sec:decomprandom}

In this section we derive \cref{thm:mainintro}.
This will follow immediately as a consequence of \cref{thm:main} and the following result.

\begin{theorem}\label{thm:random}
Let $13\log^2n/\sqrt{n}\leq p\leq1-150\log^4n/n$.
Let $\kappa\coloneqq\sqrt{np(1-p)}/(155\log^{3/4}n)$ and $\lambda\coloneqq5\sqrt{n/(1-p)}\log^2n$.
Then, a.a.s.~$D_{n,p}$ is an $(n,p,\kappa,\lambda)$-pseudorandom digraph.
\end{theorem}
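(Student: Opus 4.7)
The plan is to set $A^+ \coloneqq \{v : \ex_{D_{n,p}}(v) \geq 155\kappa\}$, $A^- \coloneqq \{v : \ex_{D_{n,p}}(v) \leq -155\kappa\}$, and $A^0 \coloneqq V\setminus (A^+\cup A^-)$, so that the excess conditions in \ref{def:digraphitem1}, \ref{def:digraphitem2} are built into the definition. Since $\ex_{D_{n,p}}(v) = d^+(v)-d^-(v)$ is the difference of two independent $\mathrm{Bin}(n-1,p)$ variables, it has mean $0$ and variance $\sigma^2 = 2(n-1)p(1-p)$; the threshold $155\kappa = \sqrt{np(1-p)}/\log^{3/4}n$ lies deep in the bulk $[-\sigma,\sigma]$, so the de Moivre--Laplace local CLT yields $\mathbb{P}[v\in A^+] = 1/2 - O(1/\log^{3/4}n)$ and $\mathbb{P}[v\in A^0] = O(1/\log^{3/4}n)$, whence $\mathbb{E}|A^\pm|=n/2\pm o(n)$ and $\mathbb{E}|A^0|=O(n/\log^{3/4}n)$. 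Since $\ex(u)$ and $\ex(v)$ share only the two edges $uv, vu$, the covariances $\mathrm{Cov}(\mathbf{1}[u\in A^\star],\mathbf{1}[v\in A^\star])$ for $\star\in\{+,-,0\}$ are $O(1/n)$ times the individual variances; a Chebyshev computation then gives $|A^\pm|=n/2\pm o(n)$ and $|A^0|=O(n/\log^{3/4}n)$ a.a.s.

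The core of the argument is to verify the edge-count conditions \ref{def:digraphitem1}--\ref{def:digraphitem4} uniformly over $v$, despite the fact that $A^+,A^-,A^0$ depend on the edges at $v$. The idea is to decouple: for a fixed vertex $v$, define $\widetilde{\ex}_v(u) \coloneqq \ex(u) - \mathbf{1}[uv\in E] + \mathbf{1}[vu\in E]$ for each $u\neq v$, giving sets $\widetilde A^\pm_v, \widetilde A^0_v$ that depend only on edges not incident to $v$. Because $|\widetilde{\ex}_v(u)-\ex(u)|\leq 1$, the symmetric differences $A^\pm\triangle\widetilde A^\pm_v$ and $A^0\triangle\widetilde A^0_v$ are all contained in
\[
S_v \ \coloneqq\ \{u\neq v : |\widetilde{\ex}_v(u)-155\kappa|\leq 1 \text{ or } |\widetilde{\ex}_v(u)+155\kappa|\leq 1\},
\]
with $\mathbb{E}|S_v| = O(n/\sqrt{np(1-p)})$ by the same local CLT density estimate. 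Conditional on the edges away from $v$, each $e_D(v,\widetilde A^\pm_v)$ is an exact $\mathrm{Bin}(|\widetilde A^\pm_v|,p)$, and \cref{lem:Chernoff} yields $e_D(v,\widetilde A^\pm_v) = p|\widetilde A^\pm_v|\pm O(\sqrt{np\log n}) = np/2 \pm o(np)$. For the discrepancy $|e_D(v,A^\pm) - e_D(v,\widetilde A^\pm_v)|\leq e_D(v,S_v)$ I would avoid a direct concentration of $|S_v|$ and instead use a first-moment union bound over $T$-tuples $(u_1,\dots,u_T)$ with $u_i\in S_v$ and $vu_i\in E$: since $\{u_i\in S_v\}$ depends only on edges away from $v$ and the events are only weakly correlated via shared pairs of edges,
\[
\mathbb{P}[e_D(v,S_v)\geq T]\ \leq\ \binom{n}{T}\,(pq)^T\,(1+o(1))^T,\qquad q \coloneqq \max_u \mathbb{P}[u\in S_v] = O(1/\sqrt{np(1-p)}),
\]
and taking $T = C\sqrt{np/(1-p)}\log n$ makes this super-polynomially small, with ample room for a union bound over $v$. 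Combining, $e_D(v,A^\pm) = np/2 \pm o(np) \in [np/4,np]$ uniformly, establishing \ref{def:digraphitem1}, \ref{def:digraphitem2} and the lower bounds of \ref{def:digraphitem4}; the bound $e_D(v,A^0), e_D(A^0,v) \leq \lambda$ in \ref{def:digraphitem3} follows from the same decoupling, since $p|\widetilde A^0_v|+p\mathbb{E}|S_v| = O(np/\log^{3/4}n)+O(\sqrt{np/(1-p)})$ is comfortably below $\lambda = 5\sqrt{n/(1-p)}\log^2 n$ after including the Chernoff slack.

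Property \ref{def:digraphitem5} is routine: for fixed $U$ with $|U|\geq \log n/(50p)$, $e_D(U)\sim\mathrm{Bin}(|U|(|U|-1),p)$ has mean $\mu\leq |U|^2 p$, and \cref{lem:betaChernoff} with $\beta=100$ gives $\mathbb{P}[e_D(U)\geq 100|U|^2 p]\leq (e/100)^{100\mu}$; using $100\mu\geq 2|U|\log n$ in the required range of $|U|$, this defeats the $\binom{n}{|U|}\leq n^{|U|}$ union bound summed over $|U|$. The main obstacle in the whole argument is the second step above: the discrepancy sets $|S_v|$ are individually well-concentrated around small means, but neither Chebyshev nor naive Azuma on the $n^2$ edges has a small enough per-step Lipschitz constant to support an $n$-vertex union bound, so one must either use the first-moment bound on $e_D(v,S_v)$ sketched above, or a carefully tuned vertex-exposure martingale via \cref{lem:Azuma} that exploits that most revealed edges do not move vertices across the thresholds $\pm 155\kappa$.
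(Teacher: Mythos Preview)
There is a genuine gap in your verification of \ref{def:digraphitem3}. You assert that $p|\widetilde A^0_v| + p\,\mathbb{E}|S_v| = O(np/\log^{3/4}n)+O(\sqrt{np/(1-p)})$ is ``comfortably below'' $\lambda = 5\sqrt{n/(1-p)}\log^2 n$, but for any constant $p$ the term $O(np/\log^{3/4}n)$ is of order $n/\log^{3/4}n$ while $\lambda$ is only of order $\sqrt{n}\log^2 n$; the inequality fails by a factor of roughly $\sqrt{n}$. More to the point, since $\mathbb{P}[u \in A^0] = \Theta(1/\log^{3/4}n)$ and the event $\{vu \in E\}$ is essentially independent of $\{u \in A^0\}$, the \emph{expectation} of $e_D(v, A^0)$ is already $\Theta(np/\log^{3/4}n)$, so no concentration around $p|\widetilde A^0_v|$ can yield \ref{def:digraphitem3}. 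Your conditional-binomial decoupling delivers $e_D(v, A^\pm) = np/2 \pm o(np)$, which is ample for \ref{def:digraphitem1}, \ref{def:digraphitem2} and \ref{def:digraphitem4}, and your argument for \ref{def:digraphitem5} matches the paper's \cref{lem:betaedges}; but \ref{def:digraphitem3} needs $e_D(v, A^\pm)$ pinned down to within an additive $O(\lambda)$, not merely $o(np)$, and bounding $e_D(v, A^0)$ through $|A^0|$ cannot achieve this.

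The paper's route is precisely the edge-exposure martingale you allude to in your closing sentence but do not carry out (\cref{lemma:A0edges}). After first revealing all edges incident to $v$, one tracks $Y_i = \mathbb{E}[\,|A^+ \cap N^+_D(v)| \mid X_1,\ldots,X_i\,]$ as the remaining pairs are revealed; the key observation is that revealing the pair $e_{i+1}$ with an endpoint $u \in N^+_D(v)$ changes $\mathbb{P}[u \in A^+ \mid \cdot\,]$ not by $1$ but by at most the point mass $q_0^{(k_i(u))} = O(\sqrt{\log k/(kp(1-p))})$ of the excess distribution at the threshold (\cref{lem:q0bound}). Summing squares gives $\sum_i c_i^2 = O(n\log^2 n/(1-p))$, so \cref{lem:Azuma} yields a deviation of at most $\sqrt{n/(1-p)}\log^2 n$ with failure probability $e^{-\Omega(\log^2 n)}$, small enough for the union bound over all $v$. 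The paper then obtains \ref{def:digraphitem3} by subtraction, $e_D(v, A^0) = d^+_D(v) - e_D(v, A^+) - e_D(v, A^-)$, combined with \cref{lem:degrees}. Your Chebyshev control of $|\widetilde A^\pm_v|$ has neither the precision (fluctuations $o(n)$ rather than $O(\sqrt{n}\,\mathrm{polylog}\,n)$) nor the tail (polynomial rather than superpolynomial) to substitute for this step.
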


Now we prove \cref{thm:mainintro}.

\begin{proof}[Proof of \cref{thm:mainintro}]
Let $\log^4n/n^{1/3}\leq p \leq1-\log^{5/2}n/n^{1/5}$ (so within the range stated in the theorem), and let $n$ be sufficiently large.
As usual, let $N \coloneqq c'n \log n$, where $c'$ is the constant from \cref{th:compress}.

If we let $D=D_{n,p}$, then by \cref{thm:random} we have that a.a.s.~$D$ is an $(n,p,\kappa,\lambda)$-pseudorandom digraph, where $\kappa = \sqrt{np(1-p)}/(155\log^{3/4}n)$ and $\lambda = 5\sqrt{n/(1-p)}\log^2n$.
As mentioned in \cref{remark:newpseudorandom}, $D$ is also an $(n,p,\kappa', \lambda')$-pseudorandom digraph for any $\kappa' \leq \kappa$ and any $\lambda' \geq \lambda$.
Taking $\kappa' = 6(N^2p)^{1/5}$ and $\lambda' = \min\{np/3, (\kappa')^2 / 12\}$, and checking that $\kappa' < \kappa$ and $\lambda' > \lambda$ for the given range of $p$
\COMMENT{
Note that the first inequality is equivalent after rearrangement to $C\log^{23/6} n n^{-1/3} \leq p(1-p)^{5/3}$ for a suitable constant $C$. If $p< 1/2$ then the RHS is at least $(n^{-1/3}\log^4 n)/4$ so the inequality is satisfied, and if $p\geq 1/2$ then the RHS is at least $(n^{-1/3} \log^{25/6} n)/2$ so the inequality is satisfied.
\\
For the second inequality, we must show  that 
\[
\frac{5n^{1/2} \log^2 n}{(1-p)^{1/2}} \leq np/3 \:\:\: \text{ and } \:\:\:
\frac{5n^{1/2} \log^2 n}{(1-p)^{1/2}} \leq \kappa'^2/12 = 100(N^{2}p)^{2/5}/12.
\]
After rearrangement, the first of these is equivalent to $Cn^{-1/2}\log^2 n \leq p(1-p)^{1/2}$, which holds in our range of $p$. After rearrangement, the second of these is equivalent  to $Cn^{-3/10} \log^{6/5} n \leq p^{2/5}(1-p)^{1/2}$, which also holds in our range of $p$.
}
and $n$ sufficiently large, we have that~$D$ is an $(n,p,\kappa', \lambda')$-pseudorandom digraph, so we can apply \cref{thm:main} to conclude that~$D$ has a perfect decomposition (that is, it is consistent).
\end{proof}

In order to prove \cref{thm:random}, we will show that each of the properties of \cref{def:digraphclass1} holds a.a.s.
First, we require some properties about the edge distribution in $D_{n,p}$.

\begin{lemma}\label{lem:betaedges}
There exists a constant $C>0$ such that, for all $p\geq C\log n/n$, a.a.s.~the digraph $D=D_{n,p}$ satisfies that, for all $A\subseteq V(D)$ with $|A|\geq\log n/(50p)$, we have 
\[e_D(A)<100|A|^2p.\]
\end{lemma}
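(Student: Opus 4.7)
\medskip

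The plan is to apply a union bound over all vertex subsets of a given size, together with the Chernoff-type tail bound from \cref{lem:betaChernoff}.

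Fix an integer $a$ with $\log n/(50p) \le a \le n$, and a fixed set $A \subseteq V(D)$ with $|A| = a$. The random variable $e_D(A)$ is a sum of $a(a-1)$ independent Bernoulli$(p)$ random variables (one for each possible edge inside $A$), so its mean is $\mu \coloneqq a(a-1)p \le a^2 p$. I would bound $\mathbb{P}[e_D(A) \ge 100 a^2 p]$ by applying \cref{lem:betaChernoff} with $\beta = 100 a^2 p / \mu \ge 100$, which yields
\[
\mathbb{P}[e_D(A) \ge 100 a^2 p] \;\le\; (e/\beta)^{\beta \mu} \;\le\; (e/100)^{100 a^2 p \cdot (a-1)/a}.
\]
For $a \ge 2$ we have $(a-1)/a \ge 1/2$, so the right hand side is at most $e^{-c a^2 p}$ for some absolute constant $c > 0$ (e.g.\ $c = 50\log(100/e) > 180$). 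The case $a = 1$ can be dismissed immediately since $e_D(A) = 0$.

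Now I would take a union bound over all $\binom{n}{a} \le (en/a)^a \le e^{a(1 + \log n)}$ sets of size $a$, giving
\[
\mathbb{P}\bigl[\exists\, A \text{ with } |A|=a,\ e_D(A) \ge 100 a^2 p\bigr] \;\le\; \exp\bigl(a(1+\log n) - c a^2 p\bigr).
\]
The crucial observation is that the hypothesis $a \ge \log n/(50p)$ gives $ap \ge \log n/50$, and hence $c a^2 p \ge (c/50)\, a \log n$. Since $c/50 > 3$, the exponent is bounded above by $-a \log n$ once $n$ is sufficiently large, so the displayed probability is at most $n^{-a}$.

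Finally, I would sum over $a$ from $\lceil \log n/(50p) \rceil$ to $n$:
\[
\sum_{a \ge \lceil \log n/(50p) \rceil}^{n} n^{-a} \;\le\; 2\, n^{-\log n/(50p)} \;\xrightarrow[n\to\infty]{}\; 0,
\]
which establishes the desired a.a.s.\ property. The argument is essentially mechanical; the only thing to notice is that the bound $|A| \ge \log n/(50p)$ is exactly what is needed for the Chernoff exponent $a^2 p$ to beat the union bound factor $a \log n$ by a constant greater than~$1$. There is no real obstacle here beyond choosing $C$ large enough that the sum above is well-defined (any $C$ works, in fact) and keeping track of the multiplicative constants.
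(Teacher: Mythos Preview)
Your proof is correct and follows essentially the same approach as the paper's: apply \cref{lem:betaChernoff} to each fixed set, union bound over sets of a given size, and then over sizes. The only differences are cosmetic (you track the constant $c$ explicitly and sum $n^{-a}$ rather than the paper's cruder $n^{-3}$ per size).
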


\begin{proof}
Fix some $\log n/(50p)\leq i\leq n$, and fix a set $A\subseteq V(D)$ with $|A|=i$.
Let $X\coloneqq e_D(A)$, so $\mathbb{E}[X]=(1-1/i)i^2p$.
A direct application of \cref{lem:betaChernoff} shows that, for sufficiently large $n$,\COMMENT{Since $i^2p\to\infty$ with $n$, we have that
\begin{align*}
    \mathbb{P}[X\geq100i^2p]\leq\mathbb{P}[X\geq100\mathbb{E}[X]]\leq(e/100)^{100\mathbb{E}[X]}=(e/100)^{100(1-1/i)i^2p}\leq(e/100)^{99i^2p},
\end{align*}
where the last inequality holds since $1-1/i=1-o(1)$.}
\[\mathbb{P}[X\geq100i^2p]\leq(e/100)^{99i^2p}.\]
Now consider all sets $A$ with $|A|=i$, and let $\mathcal{E}_i$ be the event that at least one of these sets induces at least $100i^2p$ edges.
By a union bound, it follows that\COMMENT{In order to see the last inequality, let us take logarithms.
Clearly, the inequality is equivalent to
\[i(1+\log n-\log i)+99i^2p(1-\log100)\leq-3\log n\iff i(1+\log n-\log i)+3\log n\leq99i^2p(\log100-1).\]
Now clearly, if $i$ is sufficiently large, $i(1+\log n-\log i)+3\log n\leq2i\log n$ and $99i^2p(\log100-1)\geq100i^2p$, so it would suffice to check that
\[2i\log n\leq100i^2p,\]
and this holds by the bound on $i$ in the statement.}
\[\mathbb{P}[\mathcal{E}_i]\leq\binom{n}{i}\left(\frac{e}{100}\right)^{99i^2p}\leq\left(\frac{en}{i}\right)^{i}\left(\frac{e}{100}\right)^{99i^2p}\leq\frac{1}{n^3},\]
where one can check the final inequality using the lower bound on $i$.
The conclusion follows by a union bound over all values of $i$.
\end{proof}

\begin{lemma}\label{lem:degrees}
There exist constants $C,c>0$ such that, for all $C\log n/n\leq p\leq 1-C\log n/n$, with probability at least $1-o(1/n^3)$ the digraph $D=D_{n,p}$ satisfies that, for all $v\in V$, we have 
\[d_D^+(v)=np\pm c\sqrt{np(1-p)\log n}\qquad\text{ and }\qquad d_D^-(v)=np\pm c\sqrt{np(1-p)\log n}.\]
\end{lemma}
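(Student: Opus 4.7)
The plan is to apply the multiplicative Chernoff bound (Lemma~\ref{lem:Chernoff}) to each of the $2n$ individual degrees and then take a union bound. Fix a vertex $v$: the outdegree $d^+_D(v)$ is a sum of $n-1$ independent $\mathrm{Bernoulli}(p)$ variables, so $d^+_D(v)\sim\mathrm{Bin}(n-1,p)$ with mean $\mu\coloneqq(n-1)p$, and similarly for $d^-_D(v)$. My goal is to show that
\[
\mathbb{P}\Bigl[\,|d^+_D(v)-\mu|\geq \tfrac{c}{2}\sqrt{np(1-p)\log n}\,\Bigr]\leq n^{-5},
\]
and identically for $d^-_D(v)$, which together with a union bound over the $2n$ pairs $(v,\pm)$ gives the advertised $o(1/n^3)$ failure probability. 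Since $|np-\mu|=p\leq 1$ is dominated by the error window once $n$ is large, this forces $d^{\pm}_D(v)=np\pm c\sqrt{np(1-p)\log n}$ for every $v$ simultaneously.

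The only subtlety is that a naive application of Lemma~\ref{lem:Chernoff} directly to $d^+_D(v)$ with $\delta\coloneqq \tfrac{c}{2}\sqrt{np(1-p)\log n}/\mu$ produces an exponent $\delta^2\mu$ of order $c^2(1-p)\log n$, which is $\Omega(\log n)$ when $p\leq 1/2$ but collapses as $p\to 1$. I will therefore split into two symmetric regimes. When $p\leq 1/2$, I apply Lemma~\ref{lem:Chernoff} to $d^+_D(v)$ directly: the hypothesis $p\geq C\log n/n$ with $C$ large forces $\delta<1$, while $1-p\geq 1/2$ forces $\delta^2\mu\geq (c^2/16)\log n$, so choosing $c$ large enough yields the $n^{-5}$ tail. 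When $p>1/2$, I instead apply Lemma~\ref{lem:Chernoff} to the non-neighbour count $Y\coloneqq(n-1)-d^+_D(v)\sim\mathrm{Bin}(n-1,1-p)$; since $|d^+_D(v)-\mu|=|Y-(n-1)(1-p)|$, the roles of $p$ and $1-p$ swap, the relevant exponent becomes of order $c^2 p\log n\geq (c^2/2)\log n$, and the hypothesis $1-p\geq C\log n/n$ keeps the new $\delta$ below~$1$.

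The same two-case argument applies verbatim to $d^-_D(v)$, after which the union bound over the $2n$ events concludes the proof. The only real decision, and thus the main ``obstacle'', is remembering to pass to the complementary binomial $(n-1)-d^{\pm}_D(v)$ in the regime $p>1/2$ so that the concentration is driven by the larger of $p$ and $1-p$; beyond that, the proof reduces to routine algebraic checks that $\delta\in(0,1)$ and that $\delta^2\mu\geq \Omega(\log n)$ in each of the two cases.
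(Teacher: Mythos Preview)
Your proposal is correct and follows essentially the same approach as the paper's own proof: split at $p=1/2$, apply \cref{lem:Chernoff} directly for $p\leq 1/2$, and for $p>1/2$ pass to the complementary count $(n-1)-d_D^{\pm}(v)\sim\mathrm{Bin}(n-1,1-p)$ (equivalently, the paper works in the complement digraph $\overline{D}\sim D_{n,1-p}$), then union bound over all $2n$ choices of $(v,\pm)$.
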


\begin{proof}
We split the proof into two cases.
Assume first that $p\leq1/2$.
Fix a vertex $v\in V(D)$ and a symbol $*\in\{+,-\}$.
Then, $\mathbb{E}[d_D^*(v)]=(n-1)p$ and, if $C$ and $n$ are sufficiently large (we need $C$ to be sufficiently large so that the value of $\delta$ in \cref{lem:Chernoff} satisfies $\delta\in(0,1)$), by \cref{lem:Chernoff} we conclude that\COMMENT{Let $X\coloneqq d_D^*(v)\sim\mathrm{Bin}(n-1,p)$.
We have
\begin{align*}
    \mathbb{P}\left[X\neq np\pm c\sqrt{np(1-p)\log n}\right]&\leq\mathbb{P}\left[X\neq(n-1)p\pm c\sqrt{(n-1)p(1-p)\log n}/2\right]\\
    &\leq\mathbb{P}\left[X\neq(n-1)p\pm c\sqrt{(n-1)p\log n}/4\right]\\
    &=\mathbb{P}\left[X\neq\left(1\pm\frac{c}{4}\sqrt{\frac{\log n}{(n-1)p}}\right)(n-1)p\right]\\
    &\leq2e^{-\frac{c^2}{16}\frac{\log n}{(n-1)p}(n-1)p/3}\leq e^{-c^2\log n/50}.
\end{align*}
For the second inequality we are using the fact that $1-p\geq1/2$.
The last inequality holds for $n$ sufficiently large.
Observe that the condition that $p\geq C\log n/n$ is needed to guarantee that the $\delta$ with which we apply \cref{lem:Chernoff} lies in $(0,1)$.
Indeed, in our application of the Chernoff bound we have
\[\delta=\frac{c}{4}\sqrt{\frac{\log n}{(n-1)p}}<1\iff p>\frac{c^2}{16}\frac{\log n}{n-1},\]
so it suffices to have $p>c^2\log n/8n$.}
\[\mathbb{P}\left[d_D^*(v)\neq np\pm c\sqrt{np(1-p)\log n}\right]\leq e^{-c^2\log n/50}.\]
Now, by a union bound over all choices of $v$ and $*$, it follows that the probability that the statement fails is at most\COMMENT{$2ne^{-c^2\log n/50}=e^{\log 2+(1-c^2/50)\log n}=o(1/n^3)$} $2ne^{-c^2\log n/50}=o(1/n^3)$ (where this equality holds for sufficiently large $c$).\COMMENT{$c\geq15$ suffices.}

For the second case, assume $p>1/2$, and consider the complement digraph $\overline{D}\sim D_{n,1-p}$.
We have that $1-p<1/2$, so we can apply the same argument as above to obtain that, for each $v\in V(D)$ and $*\in\{+,-\}$,
\[\mathbb{P}\left[d_{\overline{D}}^*(v)\neq n(1-p)\pm c\sqrt{np(1-p)\log n}\right]\leq e^{-c^2\log n/50}.\]
The conclusion follows by a union bound over all $v\in V(D)$ and $*\in\{+,-\}$ and going back to $D$\COMMENT{We have that $d^*_D(v)=n-1-d^*_{\overline{D}}=n-1-n(1-p)\pm c\sqrt{np(1-p)\log n}=np-1\pm c\sqrt{np(1-p)\log n}$, and this is what we want (by making $c$ slightly worse).}.
\end{proof}

Our next aim is to show that most vertices will have ``high'' excess, meaning that its absolute value is ``close'' to the maximum possible value (around $\sqrt{np(1-p)}$, up to a polylog factor) that follows from \cref{lem:degrees}.
The following remark will come in useful.

\begin{remark}\label{rem:exprobdist}
Let $p\in[0,1]$ and $n\in\mathbb{Z}$ with $n\geq0$. 
Let $X\sim\mathrm{Bin}(n,p)$.
For each $i\in\mathbb{Z}$, let $p_i\coloneqq\mathbb{P}[X=i]$.
Let $D$ be a digraph and $v\in V(D)$ be such that $d^+(v)=d^-(v)=n$.
Let $D_p$ be a random subdigraph of $D$ obtained by deleting each edge of $D$ with probability $1-p$ independently of all other edges.
Then, $\ex_{D_p}(v)$ follows a probability distribution which, for each $i\in\{-n,\ldots,n\}$, satisfies that
\[\mathbb{P}[\ex_{D_p}(v)=i]=\sum_{j=0}^{n}p_jp_{j-i}.\]
In particular, the probability function is symmetric around $i=0$.
\end{remark}

\begin{lemma}\label{lem:q0bound}
Consider the setting described in \cref{rem:exprobdist}, and assume $n\geq2$.
Then, there exists an absolute constant $K$ such that
\[\mathbb{P}[\ex_{D_p}(v)=0]\leq K\sqrt{\frac{\log n}{np(1-p)}}.\]
\end{lemma}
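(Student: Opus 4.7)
The plan is to reduce the statement to a local estimate on the binomial distribution. By \cref{rem:exprobdist} applied with $i=0$,
\[
\mathbb{P}[\ex_{D_p}(v) = 0] \;=\; \sum_{j=0}^{n} p_j^{\,2},
\]
where $p_j = \binom{n}{j} p^{j}(1-p)^{n-j}$. Since $\sum_{j=0}^n p_j = 1$, this is bounded above by $\max_{0\le j\le n} p_j$, so it suffices to prove $\max_{j} p_j \le K\sqrt{\log n / (np(1-p))}$.

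I would split into cases according to the size of $np(1-p)$. If $np(1-p) \le \log n$, then the right-hand side above is at least $K$, and the crude bound $\max_j p_j \le 1$ suffices for any $K \ge 1$.

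In the complementary regime $np(1-p) \ge \log n$, it suffices to prove the stronger inequality $\max_j p_j \le C/\sqrt{np(1-p)}$ for some absolute constant $C$. The binomial distribution is unimodal with mode $j^{\star} = \lfloor (n+1)p\rfloor$, so it suffices to bound $p_{j^{\star}}$. Applying Stirling's formula $k! = \sqrt{2\pi k}(k/e)^{k}(1+O(1/k))$ to the factorials in $\binom{n}{j^{\star}}$ yields
\[
p_{j^{\star}} \;\le\; \sqrt{\frac{n}{2\pi j^{\star}(n-j^{\star})}}\,\Big(\tfrac{np}{j^{\star}}\Big)^{j^{\star}}\!\Big(\tfrac{n(1-p)}{n-j^{\star}}\Big)^{n-j^{\star}}\bigl(1+o(1)\bigr).
\]
The product of the last two factors is at most $1$ by the standard calculus fact that $j\mapsto j\log(np/j) + (n-j)\log(n(1-p)/(n-j))$ is maximised at $j = np$ with value $0$. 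Since $|j^{\star} - np| \le 1$, the square-root factor is of order $1/\sqrt{np(1-p)}$, using $np, n(1-p) \ge np(1-p) \ge \log n$ to guarantee that neither $j^{\star}$ nor $n-j^{\star}$ is too small.

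The proof does not contain any genuine obstacle: the only step requiring care is tracking explicit constants through the Stirling approximation so that $K$ may be taken to be absolute. The extra $\sqrt{\log n}$ factor in the conclusion, compared to the sharp local central limit theorem bound $O(1/\sqrt{np(1-p)})$, is slack introduced precisely so that the regime $np(1-p) < \log n$ can be disposed of by the trivial estimate, avoiding a separate argument in the degenerate range where $p$ is close to $0$ or to $1$.
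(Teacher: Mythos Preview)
Your proof is correct and, in the main step, takes a cleaner route than the paper. Both arguments ultimately rest on the Stirling estimate $p^{\star}\coloneqq\max_j p_j = O(1/\sqrt{np(1-p)})$, but they exploit it differently. The paper first invokes a Chernoff bound to confine the mass of the binomial to a window of width $O(\sqrt{np(1-p)\log n})$ around $np$, and then bounds $\sum_j p_j^2$ by the window width times $(p^{\star})^2$ plus a negligible tail; this is where the factor $\sqrt{\log n}$ enters. You instead use the one-line estimate $\sum_j p_j^2\le p^{\star}\sum_j p_j=p^{\star}$, which bypasses Chernoff entirely and actually yields the sharper bound $O(1/\sqrt{np(1-p)})$ in the nondegenerate regime; the $\sqrt{\log n}$ then serves only to absorb the trivial case $np(1-p)\le\log n$. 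Your argument is more elementary and gives more, at the small cost of not making explicit why the $\sqrt{\log n}$ is natural from the paper's viewpoint. One cosmetic point: to keep $K$ genuinely absolute you should replace the $(1+o(1))$ in the Stirling step by the explicit Robbins bounds $\sqrt{2\pi k}(k/e)^k\le k!\le e\,k^{k+1/2}e^{-k}$ (as the paper does in its hidden calculation), and note that the regime $np(1-p)>\log n$ forces $n$ large enough that $j^{\star}$ and $n-j^{\star}$ are both at least $1$, so these bounds apply.
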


\begin{proof}
First note that, by adjusting the value of $K$, we may assume that $n$ is larger than any fixed $n_0$ (by making the right hand side above greater than $1$)\COMMENT{For every $n\geq1$ we have that $\log n/(np(1-p))\geq\log n/n$. Simply, for any given $n_0$, we can set $K\geq\sqrt{n_0/\log n_0}$ (note this function is increasing for $n_0\geq3$, and soon overtakes the value of $n_0=2$), which means that, for $n\leq n_0$, the statement is satisfied by the trivial upper bound:
\[\mathbb{P}[\ex_{D_p}(v)=0]\leq1=\sqrt{\frac{n}{\log n}\frac{\log n}{n}}\leq\sqrt{\frac{n_0}{\log n_0}\frac{\log n}{n}}\leq K\sqrt{\frac{\log n}{n}}\leq K\sqrt{\frac{\log n}{np(1-p)}}.\]
}; we choose a sufficiently large $n_0$ so that all subsequent claims hold.
By similarly adjusting the value of $K$, for any given constant $C_0$ we may assume that $C_0\log n/n\leq p\leq1-C_0\log n/n$\COMMENT{Indeed, if we assume $p<C_0\log n/n$, by adjusting $K$ we may guarantee that
\[K\sqrt{\frac{\log n}{np(1-p)}}>K/\sqrt{C_0}\geq1,\]
and the case when $p>1-C_0\log n/n$ is proved analogously.}.

So assume $C\log n/n\leq p\leq1-C\log n/n$, for a constant $C$ defined below.
One can readily check that $p^*\coloneqq\max_{i\in[n]_0}p_i$ is achieved for $i=np\pm2$\COMMENT{Consider the ratio $p_{i+1}/p_i$, for $i\in[n-1]_0$.
We have that
\[\frac{p_{i+1}}{p_i}=\frac{\displaystyle\binom{n}{i+1}p^{i+1}(1-p)^{n-i-1}}{\displaystyle\binom{n}{i}p^i(1-p)^{n-i}}=\frac{i!(n-i)!}{(i+1)!(n-i-1)!}\frac{p}{1-p}=\frac{n-i}{i+1}\frac{p}{1-p}.\]
We want to know when this ratio changes from greater than $1$ (which means the ratio is increasing) to when it is less than $1$ (decreasing).
By setting $p_{i+1}/p_i=1$ and isolating, we have that
\begin{align*}
    \frac{p_{i+1}}{p_i}=1&\iff\frac{n-i}{i+1}\frac{p}{1-p}=1\iff(n-i)\frac{p}{1-p}=i+1\\
    &\iff\frac{np}{1-p}=i+1+\frac{p}{1-p}i=\frac{i}{1-p}+1\iff i=np-1+p.
\end{align*}
Since this is the only solution, we know that the maximum must be achieved for either $\lfloor np-1+p\rfloor$ or $\lceil np-1+p\rceil$, and both of these lie in $np\pm2$.} (where the $p_i$ are as defined in \cref{rem:exprobdist}).
By using Stirling's approximation, it follows that $p^*\leq1/\sqrt{np(1-p)}$\COMMENT{Let us write $p_{np}$, and assume $n$ is sufficiently large (smaller values of $n$ are hidden by $K$; it is easy to check that, for the given range of $p$, $\pm2$ does not affect the asymptotic statements, and we will increase the final constant here to avoid issues).
We have that 
\[p_{np}=\binom{n}{np}p^{np}(1-p)^{n(1-p)}.\]
We now use the bounds $\sqrt{2\pi}n^{n+1/2}e^{-n}\leq n!\leq en^{n+1/2}e^{-n}$, similar to Stirling's approximation and valid for all $n$, to conclude that
\begin{align*}
    p_{np}\leq\frac{\displaystyle e\sqrt{n}\left(\frac{n}{e}\right)^n}{\displaystyle \sqrt{2\pi}\sqrt{np}\left(\frac{np}{e}\right)^{np}\sqrt{2\pi}\sqrt{(1-p)n}\left(\frac{(1-p)n}{e}\right)^{(1-p)n}}p^{np}(1-p)^{(1-p)n}=\frac{e}{2\pi}\frac{1}{\sqrt{np(1-p)}}.
\end{align*}
Clearly the constant in front is less than $1$, so what we claim must hold true by considering the small changes by $\pm2$.}.
On the other hand, by an application of \cref{lem:Chernoff}, there exist constants $c,C>0$ such that for all $C\log n/n\leq p\leq1-C\log n/n$ we have that\COMMENT{This sum of probabilities is the same as the probability that the outcome of a binomial variable deviates from its mean by at least $c\sqrt{np(1-p)\log n}$.
Check the proof of \cref{lem:degrees} for the details of the calculation.}
\[\sum_{i=0}^{np-c\sqrt{np(1-p)\log n}}p_i+\sum_{i=np+c\sqrt{np(1-p)\log n}}^{n}p_i\leq e^{-c^2\log n/50}.\]
Combining the above with \cref{rem:exprobdist}, it follows that\COMMENT{We have that
\begin{align*}
    \mathbb{P}[\ex_D(v)=0]&=\sum_{i=0}^{n-1}p_i^2=\sum_{i=0}^{np-c\sqrt{np(1-p)\log n}}p_i^2+\sum_{i=np-c\sqrt{np(1-p)\log n}}^{np+c\sqrt{np(1-p)\log n}}p_i^2+\sum_{i=np+c\sqrt{np(1-p)\log n}}^{n-1}p_i^2\\
    &\leq\sum_{i=0}^{np-c\sqrt{np(1-p)\log n}}p_i+2c\sqrt{np(1-p)\log n}(\max_{i\in[n]_0}p_i)^2+\sum_{i=np+c\sqrt{np(1-p)\log n}}^{n-1}p_i\\
    &\leq e^{-c^2\log n/50}+2c\sqrt{np(1-p)\log n}/(np(1-p))\leq K\sqrt{\frac{\log n}{np(1-p)}}.
\end{align*}
Note that we need $c$ and $C$ to be large enough so that we can apply \cref{lem:Chernoff}, and also $c$ to be large enough so that the second term above dominates.}
\COMNEW{In fact, we should be able to show that $\mathbb{P}[\ex_{D_p}(v)=0]=\Theta(1/\sqrt{np(1-p)})$.
I think this is true by approximating with integrals (see, e.g., here: https://math.stackexchange.com/questions/2177215/asymptotic-probability-that-two-binomial-variables-are-equal), but I wonder if we can do something better. In any case, we do not care right now, but this will be important if we want to prove for the whole range of $p$.}
\[
    \mathbb{P}[\ex_{D_p}(v)=0]\leq2c\sqrt{np(1-p)\log n}\cdot (p^*)^2+e^{-c^2\log n/50}\leq K\sqrt{\frac{\log n}{np(1-p)}}.\qedhere
\]
\end{proof}

\begin{lemma}\label{coro:excessbound2}
There exists a constant $C>0$ such that, for all $C\log n/n\leq p\leq 1-C\log n/n$, a.a.s.~the digraph $D=D_{n,p}$ contains at most $n/\log^{1/8}n$ vertices $v$ such that $|\ex_D(v)|\leq\sqrt{np(1-p)}/\log^{3/4}n$\COMMENT{The power in this log can be brought arbitrarily close to $1/2$.}\COMNEW{Actually, we can prove that only $o(n)$ vertices have excess $o(\sqrt{np})$ (this requires getting rid of the logarithmic factor in \cref{lem:q0bound}, which can be done). But perhaps it is nicer to have an explicit function.}.
\end{lemma}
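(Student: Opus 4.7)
The plan is to bound the expected number of ``low-excess'' vertices and then apply Markov's inequality. Fix $v \in V(D)$ and note that $d^+_D(v)$ and $d^-_D(v)$ are independent $\mathrm{Bin}(n-1,p)$ random variables, so $\ex_D(v)$ has exactly the distribution described in \cref{rem:exprobdist} (with $n-1$ in place of $n$). Writing $p_j \coloneqq \mathbb{P}[d^+_D(v)=j]$, the key pointwise bound is
\[
\mathbb{P}[\ex_D(v) = i] \;=\; \sum_{j} p_j p_{j-i} \;\leq\; \sum_j p_j^2 \;=\; \mathbb{P}[\ex_D(v) = 0],
\]
where the inequality follows from Cauchy--Schwarz (and reflects the unimodality of the self-convolution). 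Combining this with \cref{lem:q0bound} (applied to $n-1$, which for $n\geq 2$ only changes the constant) gives a uniform bound
\[
\mathbb{P}[\ex_D(v) = i] \;\leq\; K'\sqrt{\frac{\log n}{np(1-p)}}
\]
for every integer $i$, with $K'$ an absolute constant.

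Set $t \coloneqq \sqrt{np(1-p)}/\log^{3/4}n$. Summing the previous estimate over the at most $2t+1$ integers in $[-t, t]$ and using the lower bound $np(1-p) \to \infty$ (which follows from $p \geq C \log n / n$ and $p \leq 1 - C\log n / n$ once $C$ is taken large enough), I obtain, for $n$ large,
\[
\mathbb{P}[\,|\ex_D(v)| \leq t\,] \;\leq\; (2t+1) \cdot K'\sqrt{\frac{\log n}{np(1-p)}} \;\leq\; \frac{3K'}{\log^{1/4} n}.
\]
Let $X$ be the number of vertices of $D$ with $|\ex_D(v)| \leq t$. By linearity of expectation, $\mathbb{E}[X] \leq 3K' n / \log^{1/4} n$.

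Finally, Markov's inequality yields
\[
\mathbb{P}\!\left[ X \geq \frac{n}{\log^{1/8} n} \right] \;\leq\; \frac{\mathbb{E}[X]}{n/\log^{1/8}n} \;\leq\; \frac{3K'}{\log^{1/8} n} \;=\; o(1),
\]
which is exactly the claimed conclusion. The only non-routine step is the pointwise bound $\mathbb{P}[\ex_D(v)=i] \leq \mathbb{P}[\ex_D(v)=0]$; once that is in hand the rest is essentially a first-moment computation, and no concentration inequality beyond Markov is required (indeed the gap between $\log^{-1/4}n$ in the expectation and $\log^{-1/8}n$ in the deviation is what makes Markov sufficient).
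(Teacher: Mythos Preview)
Your proof is correct and follows essentially the same approach as the paper: bound $\mathbb{P}[\ex_D(v)=i]$ pointwise by $\mathbb{P}[\ex_D(v)=0]$, apply \cref{lem:q0bound}, sum over the $O(t)$ values of $i$ to get $\mathbb{P}[|\ex_D(v)|\leq t]=O(\log^{-1/4}n)$, and finish with Markov's inequality. The only cosmetic difference is that you obtain the pointwise bound via Cauchy--Schwarz, whereas the paper uses the rearrangement inequality (\cref{lem:rearrangement}) to the same effect.
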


\begin{proof}
Take some vertex $v\in V(D)$.
For each $i\in\mathbb{Z}$, let $p_i\coloneqq\mathbb{P}[d_D^+(v)=i]=\mathbb{P}[d_D^-(v)=i]$.
Now, by \cref{rem:exprobdist} we have that $q_0\coloneqq\mathbb{P}[\ex_D(v)=0]=\sum_{j=0}^{n-1}p_j^2$ and, for all $i\in[n-1]$, we have that $q_i\coloneqq\mathbb{P}[|\ex_D(v)|=i]=\sum_{j=0}^{n-1}p_j(p_{j-i}+p_{j+i})$.
In particular, by \cref{lem:rearrangement}, it follows that
\begin{equation}\label{equa:q0rear}
    q_i\leq 2q_0
\end{equation}
for all $i\in[n-1]$\COMMENT{We apply the second inequality to our setting, where both lists have the same elements and have (say) $5n$ elements (from $p_{-2n}$ to $p_{3n}$), to guarantee that everything is covered.}.
By combining this with \cref{lem:q0bound} (with $n-1$ playing the role of $n$), it follows that
\begin{equation}\label{eq:boundexcess}
    \mathbb{P}[|\ex_D(v)|\leq\sqrt{np(1-p)}/\log^{3/4}n]=\bigO(1/\log^{1/4}n).
\end{equation}
Let $Y\coloneqq|\{v\in V(D):|\ex_D(v)|\leq\sqrt{np(1-p)}/\log^{3/4}n\}|$.
The statement follows by applying Markov's inequality to this random variable\COMMENT{We have that $\mathbb{E}[Y]=\bigO(n/\log^{1/4}n)$.
By Markov's inequality, it follows that 
\[\mathbb{P}[Y\geq n/\log^{1/8}n]\leq\mathbb{E}[Y]\log^{1/8}n/n=\bigO(1/\log^{1/8}n)=o(1).\]}.
\end{proof}

We consider a partition of the vertices of $D_{n,p}$ into those with high excess, low excess, and the rest.
In general, given $D=D_{n,p}$, we write 
\begin{align*}
    A^+&=A^+(D)\coloneqq\{v\in V(D):\ex_D(v)\geq\sqrt{np(1-p)}/\log^{3/4}n\},\\
    A^-&=A^-(D)\coloneqq\{v\in V(D):\ex_D(v)\leq-\sqrt{np(1-p)}/\log^{3/4}n\}\text{ and}\\
    A^0&=A^0(D)\coloneqq V(D)\setminus(A^+\cup A^-).
\end{align*}
\Cref{coro:excessbound2} shows that $|A^0|=o(n)$, and it is reasonable to expect that $A^+$ and $A^-$ have roughly the same size.
Even more, we will need the property that, with high probability, all vertices have roughly the expected number of neighbours in the sets $A^+$ and $A^-$, as we show next.
The proof, while rather technical, follows standard martingale arguments and does not really require any new insights.

\begin{lemma}\label{lemma:A0edges}
There exists a constant $C>0$ such that, for all $C\log n/n\leq p\leq 1-C\log n/n$\COMNEW{Note that the result is only meaningful if $\log^2n/\sqrt{n}\ll p\ll 1-\log^4n/n$; otherwise, the error term is much larger than the main term.}, a.a.s.~the graph $D=D_{n,p}$ satisfies that, for all $v\in V(D)$,
\[e(v,A^+),e(v,A^-),e(A^+,v),e(A^-,v)= np/2\pm2\sqrt{n/(1-p)}\log^2n.\]
\end{lemma}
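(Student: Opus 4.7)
The plan is to fix $v\in V(D)$ and prove concentration of $X\coloneqq e_D(v,A^+)$ with failure probability $\leq 1/n^{3}$; the other three quantities are handled identically by edge-reversal symmetry and swapping $A^+\leftrightarrow A^-$, and a union bound over the four quantities and the $n$ choices of $v$ then gives the lemma. I use a two-stage edge exposure. In the first stage, reveal all edges of $D$ not incident to $v$; this determines, for each $u\neq v$, the \emph{reduced excess} $\ex'(u)\coloneqq d^+_{D-v}(u)-d^-_{D-v}(u)$, which is independent of the two edges between $u$ and $v$ and satisfies $\ex_D(u)=\ex'(u)+\mathbf{1}[uv\in E]-\mathbf{1}[vu\in E]$. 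Setting $T\coloneqq\sqrt{np(1-p)}/\log^{3/4}n$, I partition $V\setminus\{v\}$ into the \emph{stable positive} set $S^+\coloneqq\{u:\ex'(u)\geq T+1\}$, the \emph{stable negative} set $S^-\coloneqq\{u:\ex'(u)\leq -T-1\}$, and the \emph{borderline} set $B\coloneqq\{u:|\ex'(u)|\leq T\}$: vertices in $S^\pm$ are forced into $A^\pm$ regardless of $v$'s edges, while those in $B$ may flip, so $S^+\subseteq A^+\subseteq S^+\cup B$ and
\[
X=\sum_{u\in S^+}\mathbf{1}[vu\in E]+\sum_{u\in B\cap A^+}\mathbf{1}[vu\in E].
\]
In the second stage reveal the edges at $v$: conditional on the first stage these are independent Bernoulli$(p)$, the first sum above is $\mathrm{Bin}(|S^+|,p)$, and the second is stochastically dominated by $\mathrm{Bin}(|B|,p)$, so \cref{lem:Chernoff} yields
\[
X=|S^+|p+O(\sqrt{np\log n})+O\!\left(|B|p+\sqrt{|B|p\log n}\right)
\]
with conditional probability $\geq 1-1/n^{3}$.

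To finish I need to show, with first-stage probability $\geq 1-1/n^{3}$, that $|S^+|p=np/2\pm O(\sqrt{n/(1-p)}\log^{2}n)$ and $|B|p\leq O(\sqrt{n/(1-p)}\log^{2}n)$. An application of \cref{lem:q0bound} together with the rearrangement inequality (in the style of the proof of \cref{coro:excessbound2}) gives $\mathbb{E}[|B|]=O(\sqrt{n\log n/(p(1-p))})$; the symmetry of the distribution of $\ex'(u)$ then gives $\mathbb{E}[|S^+|]=\mathbb{E}[|S^-|]=(n-1-\mathbb{E}[|B|])/2$, which places $\mathbb{E}[|S^+|]p$ inside the desired error window around $np/2$. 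For concentration around these expectations I apply \cref{lem:Azuma} to the Doob martingale obtained by revealing the non-$v$ edges one at a time: flipping any single such edge alters the reduced excesses of exactly two vertices by $\pm 1$, hence changes each of $|S^+|$ and $|B|$ by at most a constant. Substituting the resulting bounds on $|S^+|p$ and $|B|p$ into the decomposition of $X$ and comparing each error term against $2\sqrt{n/(1-p)}\log^{2}n$ gives the claim.

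The hard part is getting a sufficiently sharp concentration for $|S^+|$: the naive bounded-differences Azuma gives only an $O(n\sqrt{\log n})$ deviation, which is too weak for $p$ not very small. The key idea is that flipping a non-$v$ edge $xy$ changes $|S^+|$ only when $\ex'(x)$ or $\ex'(y)$ happens to lie within a constant of $\pm T$, an event of probability $O(q_0)=O(\sqrt{\log n/(np(1-p))})$ by \cref{lem:q0bound}; exploiting this collapses the effective per-edge variance by the same factor, so an Efron--Stein or Freedman refinement of Azuma yields the required concentration. An alternative route is to track $|S^+|-|S^-|$ directly and use the deterministic identity $\sum_{u\neq v}\ex'(u)=0$ to convert excess balance into size balance, which is automatically sharper than applying Azuma to $|S^+|$ alone. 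Either route, combined with the Chernoff step in the second stage and a union bound over $v$, delivers the lemma.
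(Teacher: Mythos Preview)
Your two-stage exposure is a reasonable reorganisation of the paper's argument (the paper reveals the $v$-edges first and then runs the martingale, while you do the reverse), but there is a concrete computational error that breaks the proof as written. For the borderline set $B=\{u:|\ex'(u)|\le T\}$ you have $\mathbb{P}[u\in B]\le(2T+1)q_0=O(\log^{-1/4}n)$ by \cref{lem:q0bound} and \eqref{equa:q0rear}, so $\mathbb{E}[|B|]=O(n/\log^{1/4}n)$, not $O(\sqrt{n\log n/(p(1-p))})$. The estimate you wrote is what one gets for the much smaller set $\{u:\ex'(u)\in\{T-1,T\}\}$ of vertices whose $A^+$-membership can actually be flipped by the $v$-edges; with your definition of $B$ the term $|B|p$ is of order $np/\log^{1/4}n$, which for constant $p$ dwarfs the target error $\sqrt{n/(1-p)}\log^2n$, and the sandwich $|S^+\cap N^+(v)|\le X\le |S^+\cap N^+(v)|+|B\cap N^+(v)|$ cannot close. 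Even after replacing $B$ by the genuinely borderline set, you must still centre $|S^+|$ correctly: $\mathbb{E}[|S^+|]$ is $n/2$ minus half of $\mathbb{E}[|B|]$, so that $O(n/\log^{1/4}n)$ offset reappears and has to be tracked.

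Separately, your discussion of the concentration of $|S^+|$ is more complicated than necessary. You do not need Freedman or Efron--Stein: the Doob martingale $Y_i=\mathbb{E}[|S^+|\mid\mathcal{F}_i]$ already has \emph{deterministically} small increments, because exposing the pair $\{x,y\}$ shifts the conditional law of $\ex'(x)$ and $\ex'(y)$ by at most $1$ and hence changes each term $\mathbb{P}[x\in S^+\mid\mathcal{F}_i]$ by at most $q_0^{(k_i(x))}$. Summing $c_i^2$ over all pairs gives $O(n\log^2n/(p(1-p)))$, and ordinary Azuma then suffices. This is precisely the mechanism in the paper's proof (see \eqref{equa:A+lemma1nicebound}--\eqref{equa:A+lemma1ci3}); the ``rare change'' phenomenon you identified manifests as a small Lipschitz bound on the conditional expectation, not merely as a small conditional variance. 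Your alternative route via $\sum_u\ex'(u)=0$ does not obviously yield a size identity for $|S^+|-|S^-|$, so that sketch is incomplete as stated.
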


\begin{proof}
Let $V\coloneqq V(D)$, and let $E\coloneqq\{uv:u,v\in V, u\neq v\}$.
Let $N\coloneqq\binom{n}{2}=|E|/2$.
For each $k\in[n-1]_0$, let $Z_k\sim\mathrm{Bin}(k,p)$ and, for each $j\in\mathbb{Z}$, let $p_j^{(k)}\coloneqq\mathbb{P}[Z_k=j]$.

We begin by setting some notation.
Consider any labelling $e_1,\ldots,e_N$ of all (unordered) pairs of distinct vertices $e=\{u,u'\}$ with $u,u'\in V$.
We will later reveal the edges in succession following one such labelling.
For each $i\in[N]$, let $e_i=\{u_i,u_i'\}$, define $e_i^1\coloneqq u_iu_i'$ and $e_i^2\coloneqq u_i'u_i$ (the choice of $e_i^1$ and $e_i^2$ is arbitrary), and consider the random variable $X_i\coloneqq(X_i^1,X_i^2)$, where $X_i^1$ and $X_i^2$ are indicator random variables for the events $\{e_i^1\in E(D)\}$ and $\{e_i^2\in E(D)\}$, respectively.
For each $i\in[N]_0$, let $D^i\coloneqq(V,E^i)$, where $E^i\coloneqq\bigcup_{j\in[i]}\{e_j^1,e_j^2\}$.
We set $D^i_{\mathrm{cond}}\coloneqq(V,E^i_\mathrm{cond})$ to be the subdigraph of $D^i$ with $E^i_\mathrm{cond}\coloneqq\{e_j^1:j\in[i],X_j^1=1\}\cup\{e_j^2:j\in[i],X_j^2=1\}$.
(That is, without conditioning, $D^i_{\mathrm{cond}}$ is a random subdigraph of $D^i$ where each edge is retained with probability $p$ independently of all other edges, and it becomes a deterministic graph after conditioning on the outcomes of $X_1,\ldots,X_i$.)
We also define $D^i_{p}\coloneqq(V,E_{i,p})$, where $E_{i,p}\subseteq E\setminus{E^i}$ is obtained by adding each edge of $E\setminus{E^i}$ with probability $p$, independently of all other edges.
In particular, for any $i\in[N]_0$ and any digraph $F$ on $V$ such that $D^i_\mathrm{cond}\subseteq F$, we have that $\mathbb{P}[D_{n,p}=F\mid X_1,\ldots,X_i]=\mathbb{P}[D^i_p=F\setminus D^i_\mathrm{cond}]$.
For each $i\in[N]_0$ and each $v\in V$, we define $k_i(v)\coloneqq n-1-|\{u\in V:uv\in E^i\}|$. 
This is the number of (pairs of) edges incident to~$v$ which have not been revealed after revealing $X_1,\ldots,X_i$.
Thus, by \cref{rem:exprobdist}, the variable $\ex_{D^i_p}(v)$ follows a probability distribution which, for each $j\in\mathbb{Z}$, satisfies that
\begin{equation}\label{equa:A+lemma1distrib}
    \mathbb{P}[\ex_{D^{i}_p}(v)=j]=\sum_{\ell=0}^{k_{i}(v)}p^{(k_{i}(v))}_\ell p^{(k_{i}(v))}_{\ell-j}.
\end{equation}
Observe that, by \cref{lem:rearrangement} (in a similar way to \eqref{equa:q0rear}), for all $i\in[N]_0$, $v\in V$ and $j\in\mathbb{Z}$ we have that
\begin{equation}\label{equa:A+lemma1distribbound}
    \mathbb{P}[\ex_{D^{i}_p}(v)=j]\leq q_0^{(k_i(v))}\coloneqq\mathbb{P}[\ex_{D^{i}_p}(v)=0]=\sum_{\ell=0}^{k_{i}(v)}\left(p^{(k_{i}(v))}_\ell\right)^2.
\end{equation}
Furthermore, observe the following.
Choose a vertex $v\in V$ and an index $i\in[N-1]_0$ such that $d^+_{D^{i+1}}(v)-d^+_{D^i}(v)=1$, and let $a\in\mathbb{Z}$.
Then,\COMMENT{Indeed, consider the joint distribution of $D^i_p$ and $D^{i+1}_p$ where we first reveal $D^i_p$ and then reveal the last pair of edges needed to obtain $D^{i+1}_p$.
If $\ex_{D^{i+1}_p}(v)\geq a$, then we are guaranteed that $\ex_{D^i_p}(v)\geq a-1$, since the last pair of edges we reveal can only increase the excess by at most $1$.
Similarly, we cannot have $\ex_{D^i_p}(v)\geq a+1$ unless $\ex_{D^{i+1}_p}(v)\geq a$, as the excess cannot decrease by more than $1$ when revealing the last pair of edges.}
\COMNEW{About this inequality down here, and why it is phrased this way, with the conditioning written the way it is even though the events are independent from the conditions.
Indeed, what we have is that 
\[\mathbb{P}[\ex_{D^i_p}(v)\geq a+1]\leq\mathbb{P}[\ex_{D^{i+1}_p}(v)\geq a]\leq\mathbb{P}[\ex_{D^i_p}(v)\geq a-1].\]
But the ``easy'' way to see why this is true is to consider the joint distribution of these two variables where we reveal edges one by one, which implies the given bounds (see the previous comment).
By writing the conditioning explicitly, I emphasise that the variables are the same (i.e. I am assuming that the variables $X_1,\ldots,X_i$ take the same values on all sides of the inequality).\\
Ultimately, I am just not sure how to write this, and I would be happy with any other opinions; I am actually not that good at writing technical probabilistic statements, and I find that they can be quite confusing for a non-expert.}
\begin{align*}
    \mathbb{P}[\ex_{D^i_p}(v)\geq a+1\mid X_1,\ldots,X_i]&\leq\mathbb{P}[\ex_{D^{i+1}_p}(v)\geq a\mid X_1,\ldots,X_{i+1}]\\
    &\leq\mathbb{P}[\ex_{D^i_p}(v)\geq a-1\mid X_1,\ldots,X_i].
\end{align*}
(Note that the events above are actually independent from the variables upon which we condition.
This notation, however, makes the statement more intuitive and is what we will require later in the proof.)
In particular, this means that\COMMENT{Indeed, we have that either $\mathbb{P}[\ex_{D^i_p}(v)\geq a+1\mid X_1,\ldots,X_i]\leq\mathbb{P}[\ex_{D^{i+1}_p}(v)\geq a\mid X_1,\ldots,X_{i+1}]\leq\mathbb{P}[\ex_{D^i_p}(v)\geq a\mid X_1,\ldots,X_i]$ or $\mathbb{P}[\ex_{D^i_p}(v)\geq a\mid X_1,\ldots,X_i]\leq\mathbb{P}[\ex_{D^{i+1}_p}(v)\geq a\mid X_1,\ldots,X_{i+1}]\leq\mathbb{P}[\ex_{D^i_p}(v)\geq a-1\mid X_1,\ldots,X_i]$.
Now assume that the first of the two cases holds (the other follows similarly).
The claim follows since $\mathbb{P}[\ex_{D^i_p}(v)\geq a\mid X_1,\ldots,X_i]-\mathbb{P}[\ex_{D^i_p}(v)\geq a+1\mid X_1,\ldots,X_i]\leq q_0^{(k_i(v))}$.\\
If this is a bit hard to follow, we can add a factor of $2$; it does not change the asymptotics.}
\begin{equation}\label{equa:A+lemma1nicebound}
    \left\lvert\mathbb{P}[\ex_{D^i_p}(v)\geq a\mid X_1,\ldots,X_i]-\mathbb{P}[\ex_{D^{i+1}_p}(v)\geq a\mid X_1,\ldots,X_{i+1}]\right\rvert\leq q_0^{(k_i(v))}.
\end{equation}
(Indeed, we may bound $\mathbb{P}[\ex_{D^{i+1}_p}(v)\geq a\mid X_1,\ldots,X_{i+1}]$ by one of the two terms in the previous expression, which gives us two cases to consider.
In either of the cases, the difference becomes  equal to the probability that $\ex_{D^i_p}(v)$ takes a specific value, which is in turn bounded by \eqref{equa:A+lemma1distribbound}.)

Fix a vertex $v\in V$ and reveal all of its in- and outneighbours.
Label all pairs of distinct vertices $e$ as $e_1,\ldots,e_N$ in such a way that, first, we have all pairs containing $v$, and then the rest, in any arbitrary order.
In particular, we have already revealed the outcome of $X_1,\ldots,X_{n-1}$.
Let $\mathcal{E}$ be the event that $d_D^+(v),d_D^-(v)=np\pm c\sqrt{np(1-p)\log n}$, where $c$ is the constant from the statement of \cref{lem:degrees}.
By \cref{lem:degrees}, we have that $\mathbb{P}[\mathcal{E}]\geq1-1/n^3$.
Condition on this event.
We will denote probabilities in this conditional space by $\mathbb{P}'$, and expectations by $\mathbb{E}'$.
Observe that the variables $X_n,\ldots,X_N$ are independent of $\mathcal{E}$\COMMENT{So for all events that only involve these variables we have that $\mathbb{P}'=\mathbb{P}$.}.
Then, for all $u\in N^+_D(v)$ we have that $\ex_D(u)=\ex_{D^{n-1}_\mathrm{cond}}(u)+\ex_{D^{n-1}_p}(u)$, where $\ex_{D^{n-1}_\mathrm{cond}}(u)=0$ if $u\in N^-_D(v)$ and $\ex_{D^{n-1}_\mathrm{cond}}(u)=-1$ otherwise, and $\ex_{D^{n-1}_p}(u)$ follows a probability distribution which, by \eqref{equa:A+lemma1distrib}, for each $j\in\{2-n,\ldots,n-2\}$ satisfies that
\[
    \mathbb{P}'[\ex_{D^{n-1}_p}(u)=j]=\sum_{\ell=0}^{n-2}p^{(n-2)}_\ell p^{(n-2)}_{\ell-j}.
\]
By a similar argument as the one used to obtain \eqref{eq:boundexcess}, i.e., combining \eqref{equa:A+lemma1distribbound} and the above with \cref{lem:q0bound} (with $n-2$ playing the role of $n$), it follows that, for all $u\in V\setminus\{v\}$\COMMENT{By \eqref{equa:A+lemma1distribbound}, each $\mathbb{P}'[\ex_{D^{n-1}_p}(u)=i]$ (recall that here $\mathbb{P}=\mathbb{P}'$) is at most $q_0^{(n-2)}$, so
\begin{align*}
\mathbb{P}'[|\ex_{D^{n-1}_p}(u)|\geq\sqrt{np(1-p)}/\log^{3/4}n]&\geq\mathbb{P}'[|\ex_{D^{n-1}_p}(u)|>\sqrt{np(1-p)}/\log^{3/4}n]\\
&=1-\mathbb{P}'[|\ex_{D^{n-1}_p}(u)|\leq\sqrt{np(1-p)}/\log^{3/4}n]\\
&=1-\sum_{i=-\sqrt{np(1-p)}/\log^{3/4}n}^{\sqrt{np(1-p)}/\log^{3/4}n}\mathbb{P}[\ex_{D^{n-1}_p}(u)=i]\\
&\geq 1-2\sqrt{np(1-p)}/\log^{3/4}n\cdot q_0^{(n-2)}=1-\bigO(1/\log^{1/4}n),
\end{align*}
where the last inequality uses \cref{lem:q0bound} (the change from $n$ to $n-2$ does not change the asymptotics).},
\[\mathbb{P}'[|\ex_{D^{n-1}_p}(u)|\geq\sqrt{np(1-p)}/\log^{3/4}n]=1-\bigO(1/\log^{1/4}n)\]
and, therefore, one easily deduces (by symmetry and conditioning on the event that $|\ex_{D^{n-1}_p}(u)|\geq\sqrt{np(1-p)}/\log^{3/4}n$) that\COMMENT{To see this, observe the following.
Let $\mathcal{E}_1$ be the event that $|\ex_D(u)|\geq\sqrt{np(1-p)}/\log^{3/4}n$.
Then, we have that $\mathbb{P}'[u\in A^+]=\mathbb{P}'[\mathcal{E}_1]\,\mathbb{P}'[\ex_D(u)>0\mid\mathcal{E}_1]$.
We have that $\mathbb{P}'[\mathcal{E}_1]=1-\bigO(1/\log^{1/4}n)$ (indeed, by the discussion above we have that $\ex_D(u)=\ex_{D^{n-1}_p}(u)\pm1$, and $\mathbb{P}'[|\ex_D(u)|\geq\sqrt{np(1-p)}/\log^{3/4}n]\geq\mathbb{P}'[|\ex_{D^{n-1}_p}(u)|\geq2\sqrt{np(1-p)}/\log^{3/4}n]=1-\bigO(1/\log^{1/4}n)$).
After conditioning on this, the vertex has positive or negative excess only depending on whether $\ex_{D^{n-1}_p}(u)$ is positive or negative, and the probability distribution for this random variable is symmetric around $0$, so each must have probability a half.}
\begin{equation}\label{equa:A+lemma1size}
    \mathbb{P}'[u\in A^+]=1/2-\bigO(1/\log^{1/4}n).
\end{equation}

Consider the edge-exposure martingale given by the variables $Y_i\coloneqq\mathbb{E}'[|A^+\cap N^+_D(v)|\mid X_1,\ldots,X_i]$, for $i\in[N]\setminus[n-2]$.
By \eqref{equa:A+lemma1size}, it follows that $Y_{n-1},\ldots,Y_N$ is a Doob martingale with $Y_{n-1}=\mathbb{E}'[|A^+\cap N^+_D(v)|]=(1\pm 2c\sqrt{(1-p)\log n/(np)}-\bigO(1/\log^{1/4}n))np/2$ and $Y_N=|A^+\cap N^+_D(v)|$.
In order to prove that $Y_N$ is concentrated around $Y_{n-1}$, we need to bound the martingale differences with a view to applying \cref{lem:Azuma}.
Observe that, for all $i\in[N]\setminus[n-2]$, we have that $Y_i=\sum_{u\in N^+_D(v)}\mathbb{P}'[u\in A^+\mid X_1,\ldots,X_i]$.

For all $i\in[N-1]\setminus[n-2]$ such that $e_{i+1}\cap N^+_D(v)=\varnothing$, we have that $Y_{i+1}=Y_i$, and we set 
\begin{equation}\label{equa:A+lemma1ci1}
    c_i\coloneqq|Y_{i+1}-Y_i|=0.
\end{equation}
Consider now any $i\in[N-1]\setminus[n-2]$ such that $e_{i+1}=\{u,u'\}$ satisfies that $e_{i+1}\cap N^+_D(v)=\{u\}$.
Then, 
\begin{align*}
    Y_{i+1}-Y_i&=\mathbb{P}'[u\in A^+\mid X_1,\ldots,X_{i+1}]-\mathbb{P}'[u\in A^+\mid X_1,\ldots,X_i]\\
    &=\mathbb{P}'[\ex_D(u)\geq\sqrt{np(1-p)}/\log^{3/4}n\mid X_1,\ldots,X_{i+1}]\\
    &\ \ \ \qquad\qquad-\mathbb{P}'[\ex_D(u)\geq\sqrt{np(1-p)}/\log^{3/4}n\mid X_1,\ldots,X_i]\\
    &=\mathbb{P}'[\ex_{D^{i+1}_p}(u)\geq\sqrt{np(1-p)}/\log^{3/4}n-\ex_{D^{i+1}_\mathrm{cond}}(u)\mid X_1,\ldots,X_{i+1}]\\
    &\ \ \ \qquad\qquad-\mathbb{P}'[\ex_{D^i_p}(u)\geq\sqrt{np(1-p)}/\log^{3/4}n-\ex_{D^i_\mathrm{cond}}(u)\mid X_1,\ldots,X_i],
\end{align*}
so by \eqref{equa:A+lemma1distribbound} and \eqref{equa:A+lemma1nicebound}, and using the fact that $|\ex_{D^{i+1}_\mathrm{cond}}(u)-\ex_{D^i_\mathrm{cond}}(u)|\leq1$, we conclude that\COMMENT{Here we use the fact that $\ex_{D^{i+1}_\mathrm{cond}}(u)=\ex_{D^i_\mathrm{cond}}(u)\pm1$.
Thus, we have three cases.
Let us show here the case when $\ex_{D^{i+1}_\mathrm{cond}}(u)=\ex_{D^i_\mathrm{cond}}(u)+1$; the other two are done similarly.
Let $a\coloneqq\sqrt{np(1-p)}/\log^{3/4}n-\ex_{D^i_\mathrm{cond}}(u)$.
Using \eqref{equa:A+lemma1nicebound}, we conclude that 
\begin{align*}
    &|\mathbb{P}'[\ex_{D^{i+1}_p}(u)\geq a-1\mid X_1,\ldots,X_{i+1}]-\mathbb{P}'[\ex_{D^i_p}(u)\geq a\mid X_1,\ldots,X_i]|\\
    \leq &|\mathbb{P}'[\ex_{D^{i+1}_p}(u)\geq a-1\mid X_1,\ldots,X_{i+1}]-\mathbb{P}'[\ex_{D^i_p}(u)\geq a-1\mid X_1,\ldots,X_i]|\\
    &+|\mathbb{P}'[\ex_{D^i_p}(u)\geq a-1\mid X_1,\ldots,X_i]-\mathbb{P}'[\ex_{D^i_p}(u)\geq a\mid X_1,\ldots,X_i]|\\
    \leq&2q_0^{(k_i(u))}.
\end{align*}
Here, the first inequality holds by the triangle inequality.
Then, the first difference is bounded by \eqref{equa:A+lemma1nicebound}, and the second, by \eqref{equa:A+lemma1distribbound}.}
\begin{equation}\label{equa:A+lemma1ci2}
    |Y_{i+1}-Y_i|\leq 2q_0^{(k_i(u))}\eqqcolon c_i.
\end{equation}
Finally, for any $i\in[N-1]\setminus[n-2]$ such that $e_{i+1}=\{u,u'\}\subseteq N^+_D(v)$, one can similarly show that\COMMENT{We have that
\begin{align*}
    Y_{i+1}-Y_i&=\mathbb{P}'[u\in A^+\mid X_1,\ldots,X_{i+1}]+\mathbb{P}'[u'\in A^+\mid X_1,\ldots,X_{i+1}]-\mathbb{P}'[u\in A^+\mid X_1,\ldots,X_i]-\mathbb{P}'[u'\in A^+\mid X_1,\ldots,X_i]\\
    &=\mathbb{P}'[\ex_{D^{i+1}_p}(u)\geq\sqrt{np(1-p)}/\log^{3/4}n-\ex_{D^{i+1}_\mathrm{cond}}(u)\mid X_1,\ldots,X_{i+1}]\\
    &-\mathbb{P}'[\ex_{D^i_p}(u)\geq\sqrt{np(1-p)}/\log^{3/4}n-\ex_{D^i_\mathrm{cond}}(u)\mid X_1,\ldots,X_i]\\
    &+\mathbb{P}'[\ex_{D^{i+1}_p}(u')\geq\sqrt{np(1-p)}/\log^{3/4}n-\ex_{D^{i+1}_\mathrm{cond}}(u')\mid X_1,\ldots,X_{i+1}]\\
    &-\mathbb{P}'[\ex_{D^i_p}(u')\geq\sqrt{np(1-p)}/\log^{3/4}n-\ex_{D^i_\mathrm{cond}}(u')\mid X_1,\ldots,X_i].
\end{align*}
Observe that the variables above are not independent, but the bound given by \eqref{equa:A+lemma1nicebound} still holds. By applying that, letting $a\coloneqq\sqrt{np(1-p)}/\log^{3/4}n-\ex_{D^i_\mathrm{cond}}(u)$ and $b\coloneqq\sqrt{np(1-p)}/\log^{3/4}n-\ex_{D^i_\mathrm{cond}}(u')$, we have that (again, here we only write one case, there are others that work in the same way)
\begin{align*}
    |Y_{i+1}-Y_i|&\leq|\mathbb{P}'[\ex_{D^{i+1}_p}(u)\geq a-1\mid X_1,\ldots,X_{i+1}]-\mathbb{P}'[\ex_{D^i_p}(u)\geq a\mid X_1,\ldots,X_i]|\\
    &+|\mathbb{P}'[\ex_{D^{i+1}_p}(u')\geq b-1\mid X_1,\ldots,X_{i+1}]-\mathbb{P}'[\ex_{D^i_p}(u')\geq b\mid X_1,\ldots,X_i]|\\
    &\leq|\mathbb{P}'[\ex_{D^{i+1}_p}(u)\geq a-1\mid X_1,\ldots,X_{i+1}]-\mathbb{P}'[\ex_{D^i_p}(u)\geq a-1\mid X_1,\ldots,X_i]|\\
    &+|\mathbb{P}'[\ex_{D^i_p}(u)\geq a-1\mid X_1,\ldots,X_i]-\mathbb{P}'[\ex_{D^i_p}(u)\geq a\mid X_1,\ldots,X_i]|\\
    &+|\mathbb{P}'[\ex_{D^{i+1}_p}(u')\geq b-1\mid X_1,\ldots,X_{i+1}]-\mathbb{P}'[\ex_{D^i_p}(u')\geq b-1\mid X_1,\ldots,X_i]|\\
    &+|\mathbb{P}'[\ex_{D^i_p}(u')\geq b-1\mid X_1,\ldots,X_i]-\mathbb{P}'[\ex_{D^i_p}(u')\geq b\mid X_1,\ldots,X_i]|\\
    &\leq2(q_0^{(k_i(u))}+q_0^{(k_i(u'))}).
\end{align*}
}
\begin{equation}\label{equa:A+lemma1ci3}
    |Y_{i+1}-Y_i|\leq 2(q_0^{(k_i(u))}+q_0^{(k_i(u'))})\eqqcolon c_i.
\end{equation}
This covers all the range of $i\in[N-1]\setminus[n-2]$.

By combining \eqref{equa:A+lemma1ci1}--\eqref{equa:A+lemma1ci3}, we observe that, for each $u\in N^+(v)$ and each $k\in[n-2]$, the value $q_0^{(k)}$ appears as part of $c_i$ for exactly one value of $i\in[N-1]\setminus[n-2]$.
Then, we have
\[\sum_{i=n-1}^{N-1}c_i^2\leq\sum_{u\in N_D^+(v)}\sum_{k=1}^{n-2}8\left(q_0^{(k)}\right)^2,\]
where we have used the fact that $(x+y)^2\leq2x^2+2y^2$.
Now, using \cref{lem:q0bound} and the conditioning on $\mathcal{E}$, we have that\COMMENT{In order to see the last equality, observe that, since $\frac{\log k}{kp(1-p)}$ is a function decreasing in $k$ for $k\geq3$ (we may treat $p$ as a constant, for this sum), we have that 
\[1+\sum_{k=2}^{n-2}\frac{\log k}{kp(1-p)}=1+\frac{1}{p(1-p)}\sum_{k=2}^{n-2}\frac{\log k}{k}\leq1+ \frac{1}{p(1-p)}\left(C+\int_3^{n-2}\frac{\log x}{x}\,\mathrm{d}x\right)=\bigO\left(\frac{\log^2n}{p(1-p)}\right),\]
where $C<1$ is the sum of the first two terms (which is a constant), and the last inequality follows since $\int_4^{n-2}\frac{\log x}{x}\,\mathrm{d}x=C_1+\log^2n/2$.}
\[
    \sum_{i=n-1}^{N-1}c_i^2\leq\left(1\pm c\sqrt{\frac{(1-p)\log n}{np}}\right)8K^2np\left(1+\sum_{k=2}^{n-2}\frac{\log k}{kp(1-p)}\right)=\bigO\left(\frac{n\log^2n}{(1-p)}\right).
\]
Therefore, we can apply \cref{lem:Azuma} to conclude that\COMMENT{We have that
\[\mathbb{P}'[|Y_N-Y_{n-1}|\geq t]\leq2e^{-\frac{t^2}{2\sum_{i=n-1}^{N-1}c_i^2}}=e^{-\Omega(t^2(1-p)/n\log^2n)}.\]
Thus, if $t=\sqrt{n/(1-p)}\log^2n$, we have that 
\[\mathbb{P}'[|Y_N-Y_{n-1}|\geq\sqrt{n/(1-p)}\log^2n]\leq e^{-\Omega(\log^2n)}.\]
The last bound follows by observing that $\mathbb{P}[|A^+\cap N_D^+(v)|\neq np/2\pm2\sqrt{n/(1-p)}\log^2n]\leq\mathbb{P}[|A^+\cap N_D^+(v)|\neq np/2\pm(c\sqrt{np(1-p)\log n}+\sqrt{n/(1-p)}\log^2n)]=\mathbb{P}[|Y_N-Y_{n-1}|\geq\sqrt{n/(1-p)}\log^2n]$.}
\begin{equation}\label{equa:A+lemma1bound1}
    \mathbb{P}'[|A^+\cap N_D^+(v)|\neq np/2\pm2\sqrt{n/(1-p)}\log^2n]=e^{-\Omega(\log^2n)}.
\end{equation}
By similar arguments\COMNEW{They are exactly the same, and very long to write properly...}, we can show that
\begin{align}
    \mathbb{P}'[|A^-\cap N_D^+(v)|\neq np/2\pm2\sqrt{n/(1-p)}\log^2n]=e^{-\Omega(\log^2n)},\label{equa:A+lemma1bound2}\\
    \mathbb{P}'[|A^+\cap N_D^-(v)|\neq np/2\pm2\sqrt{n/(1-p)}\log^2n]=e^{-\Omega(\log^2n)},\label{equa:A+lemma1bound3}\\
    \mathbb{P}'[|A^-\cap N_D^-(v)|\neq np/2\pm2\sqrt{n/(1-p)}\log^2n]=e^{-\Omega(\log^2n)}.\label{equa:A+lemma1bound4}
\end{align}

Let $\mathcal{E}'$ be the event that $|A^+\cap N_D^+(v)|,|A^-\cap N_D^+(v)|,|A^+\cap N_D^-(v)|,|A^-\cap N_D^-(v)|=np/2\pm2\sqrt{n/(1-p)}\log^2n$.
By combining \eqref{equa:A+lemma1bound1}--\eqref{equa:A+lemma1bound4} with a union bound, it follows that $\mathbb{P}'[\mathcal{E}']=1-e^{-\Omega(\log^2n)}$.
Therefore, $\mathbb{P}[\mathcal{E}']\geq1-2/n^3$\COMMENT{By the law of total probability, we have that \[\mathbb{P}[\mathcal{E}']=\mathbb{P}'[\mathcal{E}']\,\mathbb{P}[\mathcal{E}]+\mathbb{P}[\mathcal{E}'\mid\overline{\mathcal{E}}]\,\mathbb{P}[\overline{\mathcal{E}}]\geq\mathbb{P}'[\mathcal{E}']\,\mathbb{P}[\mathcal{E}]\geq(1-e^{-\Omega(\log^2n)})(1-1/n^3)\geq1-2/n^3\]
(where the last inequality holds for $n$ sufficiently large).}.
Finally, the statement follows by a union bound over all vertices $v\in V$.
\end{proof}

\begin{proof}[Proof of \cref{thm:random}]
Condition on the event that the statements of \cref{lem:betaedges,lem:degrees,coro:excessbound2,lemma:A0edges} hold, which occurs a.a.s.
Then, \cref{lem:betaedges} directly implies \ref{def:digraphitem5} holds.
We may partition the vertices by defining $A^+\coloneqq\{v\in V(D):\ex_D(v)\geq155\kappa\}$, $A^-\coloneqq\{v\in V(D):\ex_D(v)\leq-155\kappa\}$ and $A^0\coloneqq V(D)\setminus(A^+\cup A^-)$.
In particular, by \cref{coro:excessbound2} we have that $|A^0|$ is sublinear.
The condition on the excess in \ref{def:digraphitem1} and \ref{def:digraphitem2} holds now by definition.
The conditions on the edge distribution in \ref{def:digraphitem1} and \ref{def:digraphitem2} as well as \ref{def:digraphitem4} follow by \cref{lemma:A0edges} in the given range of $p$\COMMENT{Let us prove this for one of the cases of \ref{def:digraphitem4} (the other cases are analogous or give slightly weaker bounds).
By \cref{lemma:A0edges}, for each $v\in A^+$ we have that
\[e_D(v,A^-)\geq np/2-2\sqrt{n/(1-p)}\log^2n,\]
so it suffices to check that this is at least $np/3$.
But this is equivalent to having
\[\frac{np}{6}\geq2\sqrt{\frac{n}{1-p}}\log^2n\iff p^2(1-p)\geq144\log^4n/n.\]
This clearly holds for all constant $p\in(0,1)$ (for sufficiently large $n$), so we may assume that $p=o(1)$ or $p=1-o(1)$.
In the first case, the inequality becomes $p^2\geq(1+o(1))144\log^4n/n$, which holds for $p\geq13\log^2n/n^{1/2}$ (and $n$ large enough).
In the second case, we get $(1-p)\geq(1+o(1))144\log^4n/n$, which holds for $p\leq1-150\log^4n/n$ (and $n$ large enough).}.
Finally, \ref{def:digraphitem3} holds by combining \cref{lem:degrees} and \cref{lemma:A0edges}\COMMENT{We have that (we only write one case, the other is analogous)
\[e_D(v,A^0)=d^+_D(v)-e_D(v,A^+)-e_D(v,A^-)\leq np+c\sqrt{np(1-p)\log n} - np + 4\sqrt{n/(1-p)}\log^2n\leq5\sqrt{n/(1-p)}\log^2n\]
(where the last inequality holds for sufficiently large $n$).}.
\end{proof}

\section{Conclusion}

We have shown in \cref{thm:mainintro} that, for $p$ in the range $n^{-1/3}\log^4 n \leq p \leq 1 - n^{-1/5}\log^{5/2}n$, a.a.s.~$D_{n,p}$ is consistent.
Of course, we should expect to be able to improve this range, particularly the lower bound, and perhaps even no lower bound is necessary.
Indeed, when $p \ll 1/n$, we know $D_{n,p}$ is acyclic, and it is easy to see that acyclic digraphs are consistent (simply iteratively remove maximal length paths and observe that the excess decreases by $1$ each time).

The bottleneck in our current approach is in \cref{lem:absorbc2} where we process medium length cycles.
An improvement in the bounds there would lead to an improvement in the range of $p$ in \cref{thm:mainintro}.
However this alone can only achieve a lower bound for $p$ of approximately $n^{-1/2}$: beyond that  one needs to improve other aspects of the argument and new ideas are necessary.

Finally, we saw that our methods can be used to show that a fairly broad class of digraphs (that are far from pseudorandom) are consistent; see \cref{thm:determintro} and \cref{thm:main}.
It would be interesting to find other classes of digraphs that are consistent.


\bibliographystyle{mystyle}
\bibliography{references}

\end{document}